\documentclass{article}

\usepackage{amsthm, mathtools,amssymb,stmaryrd}
\usepackage{subfig}
\usepackage{hhline}
\usepackage{multirow}
\usepackage{multicol}
\usepackage{appendix}
\usepackage{xparse}
\usepackage{mathtools}
\usepackage{cleveref}
\usepackage{autonum}
\usepackage{everypage}
\usepackage{xspace}
\usepackage{ifthen}
\usepackage{enumitem}
\usepackage{url}
\usepackage{algorithm}
\usepackage{algpseudocode}
\usepackage{tikz,pgfplots}
\usepackage{bbm}
\usepackage{nicefrac}

\makeatletter
\def\@tvsp{\mathchoice{{}\mkern-3mu}{{}\mkern-3mu}{{}\mkern-3mu}{}}
\def\ltrivert{|\@tvsp|\@tvsp|}
\def\rtrivert{|\@tvsp|\@tvsp|}
\makeatother

\makeatletter
\def\@avgsp{\mathchoice{{}\mkern-6mu}{{}\mkern-6mu}{{}\mkern-6mu}{}}
\def\llaverage{\{\@avgsp\{}
\def\rraverage{\}\@avgsp\}}
\makeatother

\makeatletter
\def\@avgsp{\mathchoice{{}\mkern-6mu}{{}\mkern-6mu}{{}\mkern-6mu}{}}
\def\lharmonic{\langle\ \@avgsp}
\def\rharmonic{\@avgsp\ \rangle}
\makeatother

\NewDocumentCommand{\normDG}{O{\cdot} O{j} O{}}{\ensuremath{\|\ifthenelse{\equal{#1}{}}{\cdot}{#1}\|_{\ifthenelse{\equal{#2}{}}{j}{#2}}\ifthenelse{\equal{#3}{}}{}{^{#3}}}\xspace}

\NewDocumentCommand{\normL}{O{\cdot} O{2} O{\Om} O{}}{\ensuremath{\|\ifthenelse{\equal{#1}{}}{\cdot}{#1}\|_{L^{\ifthenelse{\equal{#2}{}}{1}{#2}}(\ifthenelse{\equal{#3}{}}{\Om}{#3})}\ifthenelse{\equal{#4}{}}{}{^{#4}}}\xspace}

\NewDocumentCommand{\normH}{O{\cdot} O{1} O{\Om} O{}}{\ensuremath{\|\ifthenelse{\equal{#1}{}}{\cdot}{#1}\|_{H^{\ifthenelse{\equal{#2}{}}{1}{#2}}(\ifthenelse{\equal{#3}{}}{\Om}{#3})}\ifthenelse{\equal{#4}{}}{}{^{#4}}}\xspace}

\NewDocumentCommand{\Aa}{O{j} O{\cdot} O{\cdot}}{\ensuremath{\mathcal{A}_{\ifthenelse{\equal{#1}{}}{j}{#1}}(\ifthenelse{\equal{#2}{}}{\cdot}{#2},\ifthenelse{\equal{#3}{}}{\cdot}{#3})}\xspace}

\NewDocumentCommand{\mcal}{O{} O{} O{}}{\ensuremath{\mathcal{#1}\ifthenelse{\equal{#2}{}}{}{_{#2}}\ifthenelse{\equal{#3}{}}{}{^{#3}}}\xspace}

\newcommand{\average}[1]{\ensuremath{\llaverage #1\rraverage}\xspace}
\newcommand{\jump}[1]{\ensuremath{\llbracket #1\rrbracket}\xspace}
\newcommand{\Om}{\ensuremath{\Omega}\xspace}
\newcommand{\elem}{\ensuremath{\kappa}\xspace}
\newcommand{\xvec}{\ensuremath{\mathbf{x}}\xspace}
\newcommand{\yvec}{\ensuremath{\mathbf{y}}\xspace}
\newcommand{\diff}{\ensuremath{\mathrm{d}}\xspace}
\newcommand{\harmonic}[1]{\ensuremath{\lharmonic #1\rharmonic}\xspace}

\newcommand{\hfine}{\ensuremath{h}\xspace}
\newcommand{\hlocal}{\ensuremath{\mathbb{H}}\xspace}
\newcommand{\hcoarse}{\ensuremath{H}\xspace}

\newcommand{\idj}{\ensuremath{\mathbbm{1}_{\{ \mcal[D][j] \}}}\xspace} 


\newcommand{\RomanNumeralCaps}[1]
    {\MakeUppercase{\romannumeral #1}}

\providecommand{\keywords}[1]{\textbf{Keywords:} #1}

\definecolor{newcolor}{rgb}{0,0,0.9}

\definecolor{newcolor3}{rgb}{0.9,0,0}

\newtheorem{theorem}{Theorem}[section]
\newtheorem{lemma}[theorem]{Lemma}

\newtheorem{definition}[theorem]{Definition}
\newtheorem{assumption}[theorem]{Assumption}
\theoremstyle{definition}

\newtheorem{remark}[theorem]{Remark}%

\title{An agglomeration-based massively parallel non-overlapping additive Schwarz preconditioner for high-order discontinuous Galerkin methods on polytopic grids\thanks{Paola F. Antonietti and Giorgio Pennesi have been partially funded by the SIR Project n. RBSI14VT0S funded by MIUR - Italian Ministry of Education, Universities and Research. Paola F. Antonietti and Giorgio Pennesi also acknowledge the financial support given by GNCS-INdAM.}}
\author{
P.F. Antonietti\\
MOX-Dipartimento di Matematica, Politecnico di Milano, \\
Piazza Leonardo da Vinci 32, 20133 Milano, Italy. \\
Email: paola.antonietti@polimi.it
\and 
P.~Houston\\
School of Mathematical Sciences,University of Nottingham,\\
University Park, Nottingham, NG7 2RD, UK\\
Email: Paul.Houston@nottingham.ac.uk
\and
G. Pennesi\\
MOX-Dipartimento di Matematica, Politecnico di Milano, \\
Piazza Leonardo da Vinci 32, 20133 Milano, Italy. \\
Email: giorgio.pennesi@polimi.it
\and
E. S\"uli\\
Mathematical Institute, University of Oxford, \\
Andrew Wiles Building, Woodstock Road, Oxford, OX2 6GG, UK\\
Email: endre.suli@maths.ox.ac.uk
}

\begin{document}

\maketitle

\begin{abstract}
In this article we design and analyze a class of two-level non-overlapping additive Schwarz preconditioners for the solution of the linear system of equations stemming from discontinuous Galerkin discretizations of second-order elliptic partial differential equations on polytopic meshes. The preconditioner is based on a coarse space and a non-overlapping partition of the computational domain where local solvers are applied in parallel. In particular, the coarse space can potentially be chosen to be non-embedded with respect to the finer space; indeed it can be obtained from the fine grid by employing agglomeration and edge coarsening techniques. We investigate the dependence of the condition number of the preconditioned system with respect to the diffusion coefficient and the discretization parameters, i.e., the mesh size and the polynomial degree of the fine and coarse spaces. Numerical examples are presented which confirm the theoretical bounds.
\end{abstract}

\keywords{Domain decomposition, polytopic grids, discontinuous Galerkin methods.}


\section{Introduction}\label{intro}
The process of defining a computational grid characterized by standard triangular/tetrahedral or quadrilateral/hexahedral-shaped elements is one of the potential bottlenecks when traditional finite element methods are employed for the numerical approximation of problems characterized by strong complexity of the physical domain, such as, for example, in geophysical applications, fluid-structure interaction, or crack propagation problems. In order to overcome this issue, during the last decade a wide strand of literature has focused on the design of numerical methods that support the use of computational meshes composed of general po\-ly\-gonal and polyhedral elements. In the conforming setting we mention, for example, the Composite Finite Element Method \cite{HaSa1972,AnGiHo2013}, the Mimetic Finite Difference
Method \cite{HyShSt1997,BrLiSi2005,BrLiSh2005,BeLiMa2014,AnFoScVeNi2016}, the Polygonal Finite Element Method \cite{SukTab2004}, the Extended Finite Element Method \cite{TabSuk2008,FrBe2010}, the Virtual Element Method 
\cite{BeBrCaMaMaRu2012,BeBrMaRu2016_2,AnBeMoVe2014}, and the Hybrid High-Order
Method \cite{DiErLa2014,DiEr2015,DiEr2015_2}. A major issue in the design of conforming methods on such general polytopic meshes is the definition of a suitable space of continuous piecewise polynomial functions; in this context, this is far from being a trivial task, particularly for high-order approximations. An alternative strand of literature has focused on the non-conforming setting, where the ease of defining spaces of piecewise polynomial functions is naturally associated with the flexibility provided by polytopic meshes. Here, we mention, for example, Hybridizable Discontinuous Galerkin Methods \cite{CoDoGu2008,CoGoLa2009}, non-conforming %
{Virtual Element Methods} \cite{AyusoLipnikovManzini_2016,AnMaVe2017,CangianiManziniSutton_2017}, Gradient Schemes \cite{DrEyHe2016}, and Discontinuous Galerkin (DG) Methods~\cite{AntBreMar2009,BaBoCoDiPiTe2012,BaBoCoSu2014,AnGiHo2013,AntGiaHou2014,CaGeHo2014,CaDoGeHo2017,AnHoHuSaVe2017,AnCaCoDoGeGiHo2016,AnMa2017,AnBoMa2018,AntoniettiVeraniVergaraZonca_2019,AnFaRuVe2016}. In particular, DG methods represent a class of powerful non-conforming numerical schemes in which the use of numerical grids characterized by general polytopic elements couples very well with the possibility to build the underlying discrete space in the physical frame, thereby avoiding the need to map polynomial spaces from a reference/canonical element.

However, as was shown in~\cite{AnHo2011}, the condition number of the matrix in a system of linear equations stemming from DG methods may be prohibitively large; indeed, by writing $h$ to denote the mesh-size and $p$ the polynomial degree, the condition number of the DG approximation to Poisson's equation grows like $\mcal[O](\nicefrac{p^4}{h^2})$ as $h$ tends to zero and $p$ tends to infinity. For this reason, in recent years, the development of fast solvers and preconditioners for systems of linear equations stemming from (high-order) DG discretizations has been an active area of research. A variety of two-level and multigrid/multilevel techniques have been proposed, both in the geometric and algebraic settings, for the solution of DG discretizations; see, for example,~\cite{GoKa2003_2,DoLaVaZi2006,BrZh2005,BrCuGuSu2011,AnSaVe2015}. In particular, the availability of efficient geometric multilevel solvers is strongly related to the possibility of employing general-shaped polytopic grids; indeed, if polytopic grids can be employed, then the sequence of grids which are required within a multilevel iteration can be defined by agglomeration; see~\cite{AnHoHuSaVe2017,AnPe2018} for details. Besides multigrid, a recent strand in the literature has focused on Schwarz domain decomposition methods; see, for example,~\cite{ToWi2004}, for a general abstract overview of these methods. In the DG setting where standard triangular/tetrahedral or quadrilateral/hexahedral grids are employed, one of the first contributions in terms of domain decomposition solvers was presented for the solution of elliptic problems in~\cite{FeKa2001}, where bounds of order $\mcal[O](\nicefrac{H}{\delta})$ and $\mcal[O](\nicefrac{H}{h})$ were obtained for the condition number of the preconditioned system in the framework of overlapping and non-overlapping Schwarz methods, respectively; here, $H,\ h$, and $\delta$ represent the size of the coarse grid, the fine grid, and the amount of overlap, respectively. 
Dryja and Sarkis proposed in~\cite{DrSa2010} an additive Schwarz preconditioner for the solution of second-order elliptic problems with discontinuous coefficients. There, the authors showed that the condition number of the of the matrix of the preconditioned system is independent of the jumps of the coefficients across the substructure boundaries and outside a thin layer along the substructure boundaries. A further development of this algorithm, which is very well suited for parallel computation, can be found in~\cite{DrKr2016}. 
Concerning the setting of high-order DG methods, we mention the work in~\cite{AnHo2011}, where additive and multiplicative Schwarz preconditioners were introduced for efficiently solving systems of linear equations arising from the discretization of second-order symmetric elliptic boundary-value problems using $hp$-version DG methods; there, $hp$-spectral bounds of order $\mcal[O](\nicefrac{\sigma p^2 H}{h})$ were derived for a class of domain decomposition preconditioners for DG discretizations, where $\sigma$ is the coefficient of the interior penalty stabilization parameter, $p$ is the polynomial approximation degree, and $H,\ h$ is the size of the coarse and fine mesh, respectively. Recently, in~\cite{AnHoSm2016}, this condition number estimate was improved to yield the optimal rate of $\mcal[O](\nicefrac{\sigma p^2 H}{q h})$, where $q$ denotes the polynomial approximation degree employed within the coarse grid solver; cf. \cite{doi:10.1002/num.22063} for related work. We also mention the recent work presented in~\cite{KaCo2017}, where the influence of the penalty terms, as well as the choice of coarse mesh spaces, on the condition number of the matrix of the system of linear equations preconditioned with additive Schwarz methods were investigated.

The goal of this article is to design and analyze a class of two-level non-overlapping additive Schwarz preconditioners for $hp$-version DG discretizations of second-order elliptic problems on general polytopic grids. Given the DG discrete problem defined on a fine mesh of granularity $h$, the preconditioner is designed by introducing two additional partitions employed to define the local solver operators and the coarse space correction. On the one hand, the partition employed to build the local solvers is related to a suitable splitting of the DG space and hence it is assumed to be nested with respect to the fine polytopic mesh. On the other hand no conditions are imposed on the coarse partition, which can be non-nested with respect to the fine grid. In particular, we consider the \textit{massively parallel setting}, whereby the partition employed for the local solvers are finer than the grid employed within the coarse solver. Here, we investigate the dependence of the condition number with respect to both the diffusion coefficient and the discretization parameters of the fine and coarse spaces, as well as the granularity of the partition for the local solver. We stress that our analysis is carried out in a very general setting, and, in particular, for nested meshes, it allows the computational domain where the model problem is posed to be non-convex; cf.~\Cref{sec:DGMetPolGri} below.

The rest of the paper is organized as follows. In~\Cref{sec:DGMetPolGri} we introduce the DG method on polytopic grids for the numerical approximation of second-order elliptic problems. In~\Cref{sec:ASPCG} we formulate the additive Schwarz preconditioner analyzed in this article. In~\Cref{sec-analytical_background} we then outline some key analytical results that are required for the analysis that follows. \Cref{sec:PreResAS} is devoted to deriving some preliminary results required to obtain the desired bound on the condition number of the matrix of the preconditioned system stated in~\Cref{sec:ConNumEst}. Numerical experiments are presented in~\Cref{sec:NumRes} to confirm the theoretical bounds derived in this article.
%
%
%
\section{DG method on polytopic grids}\label{sec:DGMetPolGri}
In this article we consider the following second-order elliptic problem. Let 
$\Om \subset \mathbb{R}^d,\ d=2,3$, be a polygonal/polyhedral domain with boundary $\partial\Omega$ and let $f \in L^2(\Om)$ be a given function. We consider the following weak formulation: find $u \in V = H^1_0(\Om)$  such that
\begin{equation}\label{eq:Poisson_2}
\mathcal{A}(u,v) := \int_{\Omega} \rho \nabla u \cdot \nabla v \,\diff \xvec = \int_{\Omega} f v \,\diff \xvec \quad \forall\, v \in V.
\end{equation}
Here, $\rho \in L^{\infty}(\Om)$ denotes the diffusion coefficient, which we assume to be such that $0< \rho_0 \le \rho$; here, we can assume that $\rho_0 = 1$, since~\eqref{eq:Poisson_2} can always be scaled by $\nicefrac{1}{\rho_0}$. Throughout this article, we use the notation $x \lesssim y$ to signify that there exists a positive constant $C$, independent of the diffusion coefficient $\rho$ and the discretization parameters, such that $x \le Cy$. Similarly we write $x \gtrsim y$ in lieu of $x \ge Cy$, while $x\ \eqsim\ y$ is used if both $x \lesssim y$ and $x \gtrsim y$ hold.

Let $\mcal[T][h]$ be a tessellation of $\Om$ consisting of disjoint polytopic elements $\elem$ of diameter $h_{\elem}$ such that $\overline{\Om} = \cup \,\overline{ \elem}$. Here, we denote by $\mcal[F][h]$ the set of \emph{faces} $F$, which are defined as the $(d-1)$-dimensional planar facets of the elements $\kappa$ present in the mesh $\mcal[T][h]$. For $d=3$, we assume that each planar face of an element $\elem \in \mcal[T][h]$ can be subdivided into a set of co-planar $(d-1)$-dimensional simplices and we refer to this set as the set of \textit{faces}, cf.~\cite{CaDoGeHo2017}. Moreover, we write $\mcal[F][h]^B := \{ F \in \mcal[F][h]\text{: }F \subset \partial \Om \}$ to denote the set of boundary faces and $\mcal[F][h]^I := \mcal[F][h] \setminus \mcal[F][h]^B$ the set of interior faces. We set $h :=\max_{\elem\in\mcal[T][h]} h_{\elem}$ and, to ease the presentation, we assume that $h \eqsim h_{\elem}$ for all $\elem \in \mcal[T][h]$.
\begin{remark}
We adopt the hypothesis that the diffusion coefficient $\rho$ is piecewise constant on each polytopic element $\elem \in \mcal[T][\hfine]$ and write $\rho_{\elem} = \rho|_{\elem}$ to denote its restriction to $\elem$; we refer to~\cite{GeLa2010} for the more
general case when $\rho$ violates this assumption.
\end{remark}
We assume that $\mcal[T][h]$ satisfies the following assumptions; cf.~\cite{CaGeHo2014,CaDoGe2017,CaDoGeHo2017} for details.
\begin{assumption} \label{ass1}
For any $\elem \in \mcal[T][h]$ there exists a set of non-overlapping $d$-di\-mensional simplices $\elem_{\flat}^F \subset \elem$, for $F \subset \partial \elem$, such that for any face $F \subset \partial \elem$, we have that $\overline{F} = \partial \elem \cap \partial \elem_{\flat}^F$, $\bigcup_{F \subset \partial \elem} \overline{\elem_{\flat}^F} \subset \overline{\elem}$, and the diameter $h_{\elem}$ of $\elem$ can be bounded by
$ h_{\elem} \lesssim \nicefrac{d |\elem_{\flat}^F |}{|F|}$ for all $F \subset \partial \elem$,
where $|F|$ and $|\elem_{\flat}^F|$ denote the Hausdorff measure of $F$ and $\elem_{\flat}^F$, respectively.	 \end{assumption}

\begin{assumption}\label{ass2}
We assume that there exists a covering $\mcal[T][h]^{\#}:= \{ \mathcal{S}_{\elem} \}_{\elem}$ of $\mcal[T][h]$ consisting of shape-regular $d$-dimensional simplices $\mathcal{S}_{\elem}$, such that, for any $\elem \in \mcal[T][h]$, there is an $\mathcal{S}_{\elem} \in \mcal[T][h]^{\#}$ satisfying $\elem \subset \mathcal{S}_{\elem}$ and ${\rm diam}(\mcal[S][\elem]) \lesssim h_{\elem}$. We also assume that
\begin{equation}
\max_{\elem \in \mcal[T][h]} \text{card} \bigl\{ \elem' \in \mcal[T][h]: \elem' \cap \mathcal{S}_{\elem} \ne \emptyset , \mathcal{S}_{\elem} \in \mcal[T][h]^{\#} \text{ such that }\elem \subset \mathcal{S}_{\elem} \bigr\} \lesssim 1.
\end{equation}
\end{assumption}

We write $\langle \cdot \rangle$ to denote the harmonic average operator defined as follows: let $\eta$ be a sufficiently smooth function; then, for any $F \subset \partial \elem,\ F \in \mcal[F][h]$, we define
\begin{equation}
\langle \eta \rangle|_F := \begin{cases}
\displaystyle \frac{2\ \eta_{\elem^+}\ \eta_{\elem^-}}{\eta_{\elem^+}\ +\ \eta_{\elem^-}},\ & F\in\mcal[F][h]^I,\ F \subset \partial \elem^+ \cap \partial \elem^-,\\
\displaystyle \eta_{\elem},\ & F\in\mcal[F][h]^B,\ F \subset \partial \elem \cap\partial \Omega,
\end{cases}
\end{equation}
where $\eta_{\elem}$, $\kappa\in \mcal[T][h]$, denotes the trace of $\eta$ on $\partial \elem$.
Moreover, for sufficiently smooth vector- and scalar-valued functions $\boldsymbol{\tau}$ and $v$, respectively, we define the following jump and weighted average operators across $F \in \mcal[F][\hfine]$:
for $F \subset \partial \elem^+ \cap \partial \elem^-$, $F\in \mcal[F][h]^I$, we write
\begin{align}\label{eq:JumpAndAverage}
\jump{\boldsymbol{\tau}} &:= \boldsymbol{\tau}_{\elem^+} \cdot \mathbf{n}_{\elem^+} + \boldsymbol{\tau}_{\elem^-} \cdot \mathbf{n}_{\elem^-},\ &
\average{\boldsymbol{\tau}}_{\omega} &:= \omega \boldsymbol{\tau}_{\elem^+}  + (1-\omega) \boldsymbol{\tau}_{\elem^-},\\
\jump{v} &:= v_{\elem^+} \mathbf{n}_{\elem^+} + v_{\elem^-}\mathbf{n}_{\elem^-},
& \average{v}_{\omega} &:= \omega v_{\elem^+}  + (1-\omega) v_{\elem^-},
\end{align}
where $\mathbf{n}_{\elem^\pm}$ denotes the unit outward normal vector to $\elem^\pm$, respectively. For  $F \subset \partial \elem$, $F\in \mcal[F][h]^B$, we set $\average{ \boldsymbol{\tau}}_{\omega} :=\boldsymbol{\tau}_{\elem},\ \jump{v} := v_{\elem}\mathbf{n},$ cf. \cite{ArBrMa2001}. Here, $\omega \in [0,1]$ represents the weight employed for the definition of $\average{\cdot}_{\omega}$. Given $\mcal[T][h]^s \subseteq \mcal[T][h]$ and $\mcal[F][h]^s \subseteq \mcal[F][h]$, we write
	$\int_{\mcal[T][h]^s} \mathrm{d}x := \sum_{\elem\in \mcal[T][h]^s} \int_{\elem}\mathrm{d}x$,
	$| v |_{H^1(\mcal[T][\hfine]^s)}^2 := \sum_{\elem \in \mcal[T][{\hfine}]^s} \| \nabla v \|_{L^2(\elem)}^2$,
	$\int_{\mcal[F][h]^s} \mathrm{d}s := \sum_{F\in \mcal[F][h]^s} \int_F \mathrm{d}s$, and
	$\| v \|_{L^2(\mcal[F][\hfine]^s)}^2 := \sum_{F \in \mcal[F][\hfine]^s} \| v \|_{L^2(F)}^2$.
Finally, writing $\mcal[P][p](\elem)$ to denote the space of polynomials of total degree $p \ge 1$ on $\elem$, $\elem \in \mcal[T][\hfine]$, the DG space is given by
\begin{equation}\label{eq:Vh}
V_h :=\{v_h \in L^2(\Om):v_h|_\elem\in \mcal[P][p](\elem) ~ \forall\, \elem\in\mcal[T][h]\}.
\end{equation}
With this notation, we introduce the Symmetric Interior Penalty DG (SIPDG) discretization of \eqref{eq:Poisson_2}, cf.~\cite{Wh1978,Ar1982,8160990}: find $u_{\hfine} \in V_{\hfine}$ such that
\begin{equation}\label{eq:dG}
\Aa[h][u_h][v_h] = \int_\Om f v_h\mathrm{d}x\quad\forall\, v_h\in V_h,
\end{equation}
where $\mathcal{A}_h: V_h \times V_h \rightarrow \mathbb{R}$ is the bilinear form of the DG method defined as
\begin{align}\label{eq:Ah}
\begin{aligned}
\Aa[h][u_h][v_h] &:= \int_{\Om} \rho \Bigl[ \nabla_h u_h \cdot  \nabla_h v_h\ +  \nabla_h u_h \cdot \mathcal{R}_{\rho}(\jump{v_h}) + \nabla_h v_h \cdot \mathcal{R}_{\rho}(\jump{u_h} ) \Bigr] \mathrm{d}\xvec \\
& ~~~+ \int_{\mcal[F][h]} \sigma_{h,\rho}\jump{u_h}\cdot\jump{v_h}\ \mathrm{d}s,
\end{aligned}
\end{align}
and $\nabla_h$ denotes the piecewise gradient operator on $\mcal[T][h]$. Here, $\mathcal{R}_{\rho}:[L^1(\mcal[F][h])]^d \rightarrow [V_h]^d$ denotes the lifting operator defined by
\begin{equation}\label{eq:lifting_R_2}
\int_{\Om} \mcal[R][\rho](\mathbf{q}) \cdot \boldsymbol{\eta}\ \diff \xvec \, := - \int_{\mcal[F][h]} \mathbf{q} \cdot \average{\boldsymbol{\eta}}_{\omega}\ \mathrm{d}s \quad \forall \, \boldsymbol{\eta} \in [V_h]^d,
\end{equation}
where, we take $\omega|_F := \frac{\rho_{\elem^-}}{\rho_{\elem^+}\ +\ \rho_{\elem^-}}$ on each internal face $F\in \mcal[F][h]^I$ shared by $\elem^{\pm}$.
In~\eqref{eq:Ah}, according to \cite{Dr2003,CaDoGeHo2017}, $\sigma_{h,\rho} \in L^\infty(\mcal[F][h])$ denotes the interior penalty stabilization function, which is defined by
\begin{equation}\label{eq:Sigma}
\sigma_{h,\rho}|_F := C_{\sigma} \harmonic{\rho_{\elem}} \nicefrac{ p^2 }{ \harmonic{ h_{\elem} } }\quad \forall\, F \in \mcal[F][h],
\end{equation}
with $C_{\sigma}>0$ independent of $\rho$, $p$, $|F|$, $|\elem|$ and $h_{\elem}$. It is well known that the condition number of the operator $\mcal[A][h]$ is potentially prohibitively large and depends on the size of the partition $\mcal[T][h]$ and the polynomial degree $p$ employed for the discretization; cf.~\cite{AnHo2011} for standard triangular/tetrahedral/hexahedral grids and~\cite{AnSaVe2015} for polytopic grids. Our goal is to introduce a massively parallel non-overlapping additive Schwarz preconditioner, which can be employed as a preconditioner to accelerate the convergence of iterative solvers such as the Conjugate Gradient method. 

\section{Non-overlapping additive Schwarz preconditioner}\label{sec:ASPCG}
The definition of the additive Schwarz preconditioner requires the introduction of two additional partitions (besides $\mcal[T][h]$): a partition $\mcal[T][\hlocal]$ composed of disjoint polytopic subdomains where local solvers are applied in parallel and a non-overlapping partition $\mcal[T][\hcoarse]$ employed for the coarse space correction. To this end, we introduce the following notation:
\begin{itemize}
\item $\mcal[T][\hlocal] := \{ \Om_1, \dots, \Om_{N_\hlocal} \}$ of size $\hlocal := \max_{1 \le i \le N_{\hlocal}}\{\text{diam}(\Om_i)\}$ such that each subdomain $\Om_i$ is the union of some elements $\elem \in \mcal[T][\hfine]$; we assume that $\hlocal \eqsim \text{diam}(\Om_i)$ for all $i=1,\dots,N_\hlocal$. We also assume that a colouring property holds, i.e., there exists a positive integer $N_{\mathbb{S}}$ such that
\begin{equation}\label{eq:Ns}
\max_{i=1,\dots,N_{\hlocal}} \text{card}\{ \Om_j:\ \partial \Om_i \cap \partial \Om_j \ne \emptyset \} \le N_{\mathbb{S}},
\end{equation}
i.e., $N_{\mathbb{S}}$ represents the maximum number of neighbours that any subdomain $\Om_i \in \mcal[T][\hlocal]$ may possess.
\item $\mcal[T][\hcoarse] := \{ \mcal[D]_1, \dots, \mcal[D]_{N_\hcoarse} \}$ of size $\hcoarse := \max_{1 \le j \le N_{\hcoarse}}\{\text{diam}(\mcal[D]_j)\}$ such that $\hcoarse \eqsim \text{diam}(\mcal[D]_j)$ for all $j=1,\dots,N_{\hcoarse}$.
\end{itemize}
We remark that  the grids $\mcal[T][\hcoarse]$ and $\mcal[T][\hfine]$ are possibly non-nested, cf.~\Cref{fig:TauhTauHTauHH}.

\begin{figure}[t]
\centering
\begin{tabular}{cc}
\includegraphics[width=0.3\textwidth]{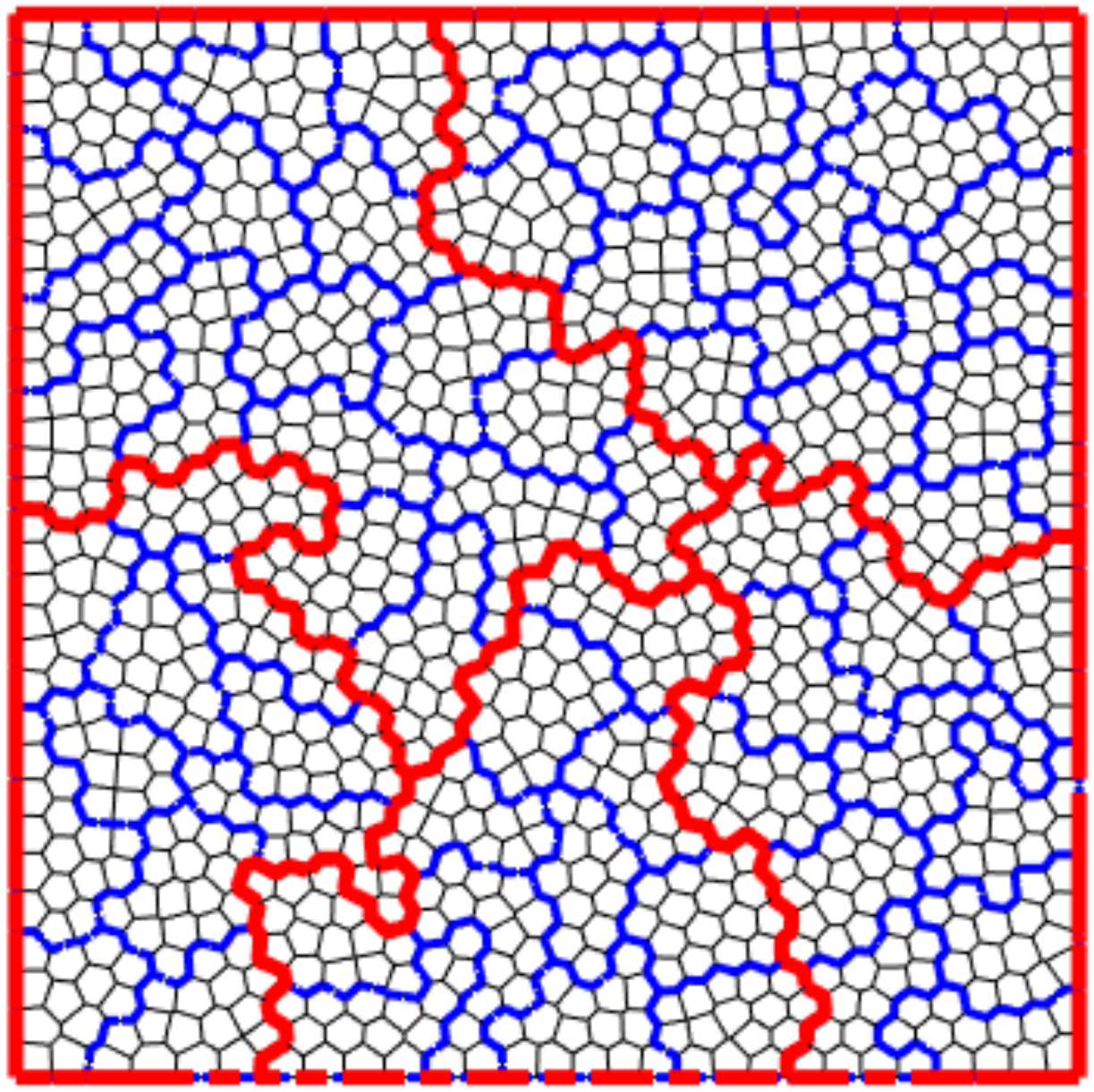}  &
\includegraphics[width=0.3\textwidth]{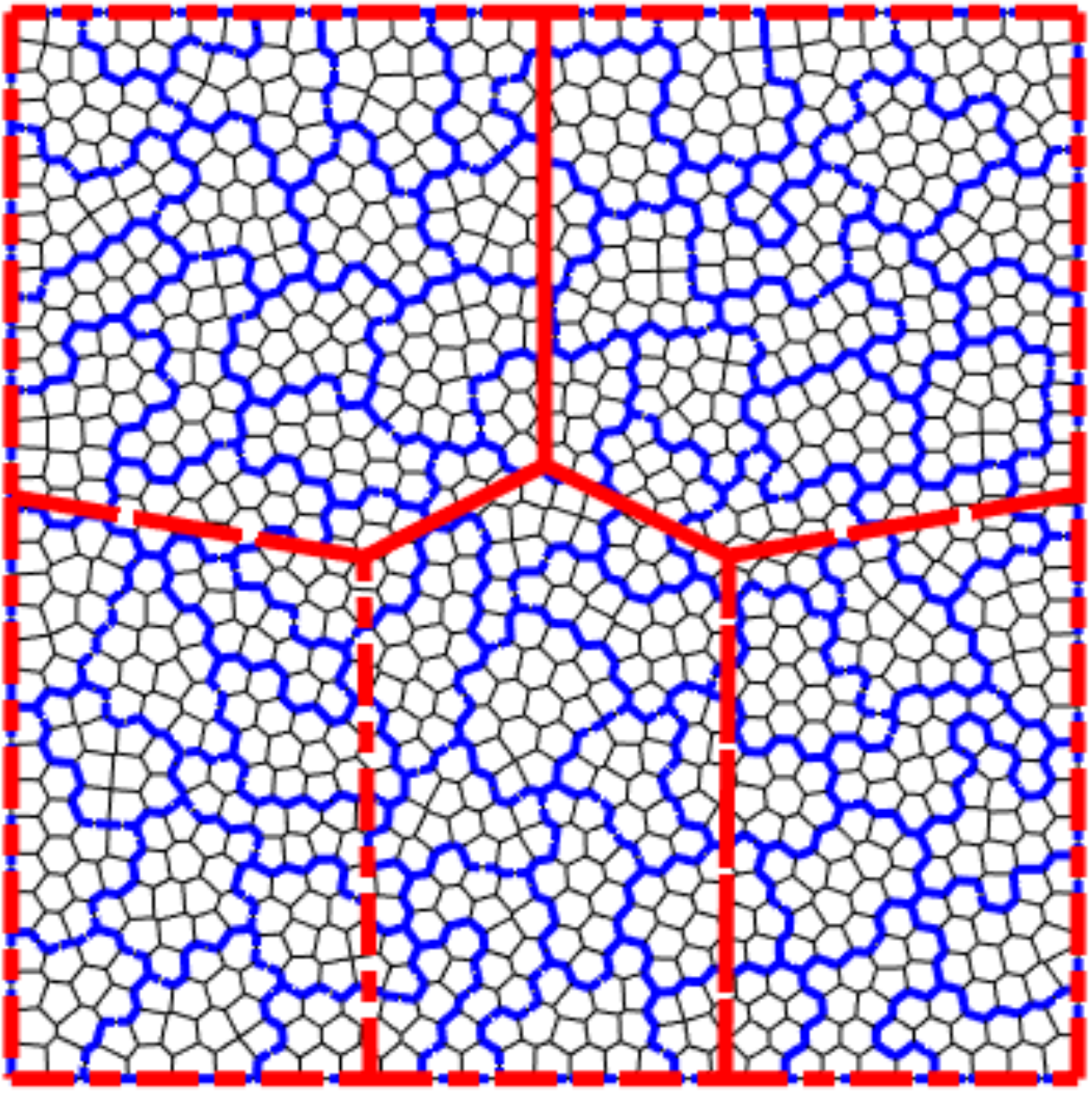}
\end{tabular}
\caption{Example of polygonal $\mcal[T][h]$ (black), $\mcal[T][\hlocal]$ (blue) and $\mcal[T][\hcoarse]$ (red), when the coarse and fine grids are nested, i.e., $\mcal[T][\hfine] \subseteq \mcal[T][\hcoarse]$, (left) and non-nested, i.e., $\mcal[T][\hfine] \nsubseteq \mcal[T][\hcoarse]$ (right).}
\label{fig:TauhTauHTauHH}
\end{figure}

\begin{remark} \label{rmrk:assump_nested_meshes}
Given that $\mcal[T][\hlocal]$ is defined by agglomeration of fine-grid-elements $\elem \in \mcal[T][\hfine]$, we write $\mcal[T][\hfine] \subseteq \mcal[T][\hlocal]$, since, for all $\elem \in \mcal[T][\hfine]$, there exists a $\mcal[K] \in \mcal[T][\hlocal]$ such that $\elem \subseteq \mcal[K]$. However, we point out that no further assumptions are needed on the relationship between $\mcal[T][\hlocal]$ and $\mcal[T][\hcoarse]$ for the definition of our method. Classical additive Schwarz methods have typically been defined based on the assumption that $\mcal[T][\hfine] \subseteq \mcal[T][\hcoarse] \subseteq \mcal[T][\hlocal]$. In this article we take a different approach: firstly, we {\em assume} that the granularity of $\mcal[T][\hlocal]$ is finer than that of $\mcal[T][\hcoarse]$; indeed, we are particularly interested in the massively parallel case whereby $\mcal[T][\hlocal] = \mcal[T][\hfine]$, cf.~\cite{DrKr2016}. Secondly, we also permit the use of non-nested coarse and fine partitions, i.e., when $\mcal[T][\hfine] \nsubseteq \mcal[T][\hcoarse]$.
\end{remark}

\noindent The main ingredients of the additive Schwarz method are defined as follows.
\medskip

\textbf{Local Solvers}. Consider the subdomain partition $\mcal[T][\hlocal]$ with cardinality $N_\hlocal$. Then, for each subdomain $\Om_i \in\mcal[T][\hlocal]$ we define a local space $V_i$ as the restriction of the DG finite element space $V_\hfine$ to $\Om_i$, i.e., for $i=1,\dots,N_\hlocal$,
\begin{equation}
V_i := \{ v_i \in L^2(\Om_i):\ v_i|_{\elem} \in \mcal[P][p](\elem)\ \forall\, \elem \in \mcal[T][h],\ \elem \subseteq \Om_i \} \equiv V_{\hfine}|_{\Om_i}.
\end{equation}
The associated local bilinear form on $V_i\times V_i$ is defined by
\begin{equation}
\mathcal{A}_i: V_i \times V_i \rightarrow \mathbb{R}, \quad \mathcal{A}_i(u_i,v_i) := \Aa[\hfine][R_i^\top u_i][R_i^\top v_i] \quad \forall\, u_i,v_i \in V_i,
\end{equation}
where $R_i^\top:V_i \rightarrow V_\hfine$ denotes the classical extension-by-zero operator from the local space $V_i$ to the global space $V_\hfine$. The restriction operator $R_i:V_{\hfine}\rightarrow V_i$, $i=1,\dots,N_\hlocal$, is defined as the transpose of $R_i^\top$ with respect to the $L^2(\Om_i)$ inner product.
\medskip

\textbf{Coarse Solver.} For $1 \le q \le p$, the coarse solver is defined on the partition $\mcal[T][\hcoarse]$. To this end, let $V_0$ be the DG finite element space defined on $\mcal[T][\hcoarse]$ given by
\begin{equation}
V_0 \equiv V_{\hcoarse} := \{v_{\hcoarse} \in L^2(\Om): v_{\hcoarse}|_{\mcal[D]_j} \in \mcal[P][q](\mcal[D]_j),\ j=1,\dots,N_{\hcoarse}\}.
\end{equation}
Further, let $R_0^\top$ be the $L^2$-prolongation operator from $V_0$ to $V_\hfine$, defined as:
\begin{equation}\label{eq:R0T}
R_0^\top: v_0 \in V_0 \longmapsto R_0^\top v_0 \in V_\hfine: \quad \int_{\Om} R_0^\top v_0 w_\hfine \diff \xvec :=  \int_{\Om} v_0 w_\hfine \diff \xvec \quad \forall\, w_\hfine \in V_\hfine.
\end{equation}
In this way $R_0^\top$ is well defined also when $\mcal[T][\hcoarse]$ and $\mcal[T][\hfine]$ are non-nested. Then, the bilinear form associated to $V_0$ is defined by
\begin{equation}\label{eq:A0}
\mathcal{A}_0: V_0 \times V_0 \rightarrow \mathbb{R}, \quad \mathcal{A}_0(u_0,v_0) := \mathcal{A}_\hfine(R_0^\top u_0, R_0^\top v_0) \quad \forall\, u_0,v_0 \in V_0.
\end{equation}

\begin{remark}[Nested spaces]\label{rmrk:R_0T_Nest}
When $V_0 \equiv V_{\hcoarse} \subseteq V_\hfine$, i.e., when the coarse and fine grids $\mcal[T][\hcoarse]$ and $\mcal[T][\hfine]$, respectively, are nested, then the action of $R_0^\top$ on a coarse function coincides with the action of the natural injection operator. Indeed, by contradiction, if there exists a $\overline{v}_0 \in V_{0}$ such that $R_0^\top \overline{v}_0 \ne  \overline{v}_0$, then, by employing the definition of $R_0^\top$, we have
\begin{equation}
0 < \| R_0^\top \overline{v}_0 - \overline{v}_0 \|_{L^2(\Om)} = \min_{w_h \in V_h} \| w_h - \overline{v}_0 \|_{L^2(\Om)} \le \| \overline{v}_0 - \overline{v}_0 \|_{L^2(\Om)} = 0,
\end{equation}
which is a contradiction and hence $R_0^{\top} v_0 = v_0$ for all $v_0 \in V_0$ when $V_0 \subseteq V_{\hfine}$.
\end{remark}
\noindent Introducing the projection operators $P_i := R_i^\top \widetilde{P}_i: V_{\hfine} \rightarrow V_{\hfine},\ i=0,1,\dots,N_\hlocal,$ where
\begin{align}
&\widetilde{P}_i:V_\hfine \rightarrow V_i, \ \mathcal{A}_i(\widetilde{P}_i v_\hfine, w_i) := \Aa[h][v_\hfine][R_i^\top w_i] \quad \forall\, w_i \in V_i, \ i=1,\dots,N_\hlocal,\\
&\widetilde{P}_0:V_\hfine \rightarrow V_0, \ \mathcal{A}_0(\widetilde{P}_0 v_\hfine, w_0) := \Aa[h][v_\hfine][R_0^\top w_0] \quad  \forall\, w_0 \in V_0,
\end{align}
the additive Schwarz operator is defined by $P_{ad} := \sum_{i=0}^{N_\hlocal} P_i$. For an upper bound on the condition number of $P_{as}$, we refer to \Cref{sec:ConNumEst} below.



\section{Analytical background} \label{sec-analytical_background}

Before we embark on developing the preliminary results needed to analyze the condition number of the additive Schwarz operator introduced in the previous section, we first state two key theorems, which are essential in the forthcoming analysis, and which may be of relevance in other more general settings. To this end, we write $D\subset {\mathbb R}^d$, $d\geq 2$, to denote a bounded, open, simply connected domain, with boundary $\partial D$; in the proceeding analysis, $D$ will be selected to be an element ${\mathcal D}_j$, $j=1,\ldots,N_H$, from the coarse mesh $\mcal[T][\hcoarse]$. Following \cite{St1970}, cf. also \cite{sauter_1996,sauter_warnke_1999}, we introduce the definition of a \textit{special Lipschitz domain}, and the notion of a \textit{domain with a minimally smooth boundary}.

\begin{definition}(\cite[Section 3.2]{St1970},\cite[Definition 1]{sauter_1996})
	Let $\phi:{\mathbb R}^{d-1}\rightarrow {\mathbb R}$ be a function that satisfies the Lipschitz condition
	\begin{equation}
	|\phi(\xvec) - \phi(\yvec)| \leq M |\xvec-\yvec| \qquad \forall\, \xvec,\yvec \in {\mathbb R}^{d-1}.
	\label{lipschitz_fn}
	\end{equation}
	The smallest $M$ for which \eqref{lipschitz_fn} holds is denoted by $C_\phi$.
	Based on this function, we define the special Lipschitz domain it determines to be the set of points lying above the hypersurface $y=\phi(\xvec)$ in ${\mathbb R}^{d}$, i.e.,
	$$
	\omega = \left\{ \xvec \in {\mathbb R}^{d}: x_d > \phi(x_1,x_2,\ldots,x_{d-1})\right\}\!.
	$$
	The Lipschitz constant of the domain $\omega$ is defined by $C_\omega:=C_\phi$.
\end{definition}
Equipped with this definition, we now introduce the concept of a \textit{minimally smooth boundary}.
\begin{definition}(\cite[Section 3.3]{St1970},\cite[Definition 2]{sauter_1996}) \label{minimally_smooth_boundary}
The boundary $\partial D$ of $D$ is said to be minimally smooth if there exists an $\epsilon>0$, a positive integer $N$, an $M>0$, and a sequence $U_1,U_2,\ldots$ of open sets such that:
\begin{enumerate}
	\item If $\xvec\in \partial D$, then $B(\xvec,\epsilon) \subset U_i$ for some $i$, where $B(\xvec,\epsilon)$ denotes the ball with centre $\xvec$ and radius $\epsilon$;
	\item No point $\xvec\in {\mathbb R}^{d}$ is contained in more than $N$ of the $U_i$'s;
	\item For each $i$ there exists a special Lipschitz domain $\omega_i$ with $C_{\omega_i} \leq M$ such that
	$$
	U_i\cap D =U_i \cap \omega_i.
	$$
\end{enumerate}
\end{definition}
Based on the previous definition, we now introduce the following classical extension operator.
\begin{theorem}(\cite[Theorem 5]{St1970}, \cite[Theorem 3]{sauter_1996}) \label{thm-extension}
Let $D$ be a domain with mi\-ni\-mally smooth boundary. Then, there exists a linear extension
operator $\frak{E}:H^s(D) \rightarrow H^s({\mathbb R}^d)$, $s \in {\mathbb N}_0$, such that
$\frak{E}v|_{D}=v$ and
$$
\| \frak{E} v \|_{H^s({\mathbb R}^d)} \leq C_{\frak{E}} \| v \|_{H^s(D)},
$$
where $C_{\frak{E}}$ is a positive constant depending only on $s$ and the constants $\epsilon$, $N$, and $M$ defined in Definition~\ref{minimally_smooth_boundary}, which characterize the boundary $\partial D$.
\end{theorem}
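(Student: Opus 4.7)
The plan is to follow the classical construction due to Stein. First I would use the open cover $\{U_i\}_{i\geq 1}$ from Definition~\ref{minimally_smooth_boundary} together with an auxiliary interior set $U_0$ whose closure lies inside $D$ and bounded away from $\partial D$. Because $\overline{D}$ is covered by $U_0 \cup \bigcup_{i\geq 1} U_i$, there exists a locally finite partition of unity $\{\zeta_i\}_{i\geq 0}$ subordinate to this covering, with the crucial property that $\zeta_i$ and its derivatives are bounded uniformly in $i$ (this is where the finite overlap constant $N$ and radius $\epsilon$ enter). Writing $v = \zeta_0 v + \sum_{i\geq 1} \zeta_i v$, the interior piece $\zeta_0 v$ extends trivially by zero to $\mathbb{R}^d$, and the problem reduces to constructing a uniformly bounded extension operator from $H^s(\omega)$ to $H^s(\mathbb{R}^d)$ for each special Lipschitz domain $\omega = \omega_i$, with constants depending only on $M$.

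For a special Lipschitz domain $\omega = \{\xvec\in\mathbb{R}^d : x_d > \phi(x_1,\ldots,x_{d-1})\}$, the next step is to build a regularized distance $\delta^*$ on $\omega^c$ that is $C^\infty$ there, vanishes on $\partial\omega$, satisfies $\delta^*(\xvec) \eqsim \mathrm{dist}(\xvec,\partial\omega)$, and obeys the derivative bounds $|\partial^\alpha \delta^*(\xvec)| \lesssim \mathrm{dist}(\xvec,\partial\omega)^{1-|\alpha|}$. This is obtained from a Whitney decomposition of $\omega^c$ into dyadic cubes, summing appropriately scaled bumps. The extension is then defined (for $\xvec \notin \overline{\omega}$) by
\begin{equation*}
\frak{E} v(\xvec) := \int_1^\infty v\bigl(x_1,\ldots,x_{d-1},\,x_d + \lambda\,\delta^*(\xvec)\bigr)\,\psi(\lambda)\,\diff\lambda,
\end{equation*}
where $\psi$ is a continuous, rapidly decreasing function on $[1,\infty)$ satisfying the moment conditions $\int_1^\infty \psi(\lambda)\,\diff\lambda = 1$ and $\int_1^\infty \lambda^k \psi(\lambda)\,\diff\lambda = 0$ for every integer $k\geq 1$.

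With this construction in hand, the norm bound $\|\frak{E}v\|_{H^s(\mathbb{R}^d)} \leq C_{\frak{E}} \|v\|_{H^s(D)}$ follows by differentiating under the integral sign. Each derivative of order $|\alpha|\leq s$ produces, via the chain rule, terms involving products of derivatives of $\delta^*$ (controlled by negative powers of $\mathrm{dist}(\xvec,\partial\omega)$) and derivatives of $v$ evaluated at the reflected point; these negative powers are compensated by positive powers of $\lambda$ that appear from the chain rule and a change of variables $\mu = \lambda\,\delta^*(\xvec)$, while the moment conditions on $\psi$ ensure that traces match across $\partial\omega$ so that $\frak{E}v \in H^s(\mathbb{R}^d)$. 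The $L^2$-bounds are then assembled via Minkowski's integral inequality, Fubini, and a careful change of variables, yielding constants depending only on $s$, $M$, and the decay of $\psi$.

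The main obstacle is the construction of $\psi$ with infinitely many vanishing moments while remaining rapidly decreasing; one cannot have such a $\psi$ with compact support, so an explicit analytic construction is required (Stein gives one via contour integration, realizing $\psi$ as the inverse Mellin transform of an explicit function with prescribed zeros at the positive integers). The second delicate point is bookkeeping: one must verify that all constants depend only on $s$ and on $\epsilon$, $N$, $M$ from Definition~\ref{minimally_smooth_boundary}, so that the same extension operator, applied piecewise via the partition of unity, works uniformly across coarse elements $\mcal[D]_j$ later in the paper.
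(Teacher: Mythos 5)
The paper does not prove this theorem; it quotes it verbatim from Stein (1970) and Sauter (1996). What you have written is therefore a reconstruction of the classical Stein extension argument from the cited source, and your outline is essentially faithful to it: the reduction to a special Lipschitz half-space via the covering and a uniformly bounded partition of unity, the construction of a regularized distance $\delta^*$ from a Whitney decomposition of $\omega^c$ with the derivative bounds $|\partial^\alpha\delta^*|\lesssim \mathrm{dist}(\,\cdot\,,\partial\omega)^{1-|\alpha|}$, and the integral extension with a rapidly decreasing weight $\psi$ satisfying the infinite family of vanishing-moment conditions are exactly Stein's three pillars.

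Two bookkeeping points would need to be tightened in a full write-up. First, the normalization of $\delta^*$ matters: comparability $\delta^*(\xvec)\eqsim\mathrm{dist}(\xvec,\partial\omega)$ alone does not guarantee that $x_d + \lambda\,\delta^*(\xvec)$ lies in $\omega$ for all $\lambda\geq 1$ when $\phi$ has Lipschitz constant $M>0$, since the vertical gap $\phi(x')-x_d$ can exceed the Euclidean distance by a factor up to $\sqrt{1+M^2}$. Stein rescales the regularized distance (and carries a factor $2\lambda$ in the reflection) precisely so that the reflected point clears the graph for all $\lambda\geq 1$; your formula should absorb such a constant into $\delta^*$. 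Second, when the pieces $\frak{E}(\zeta_i v)$ are summed to produce the global $H^s(\mathbb{R}^d)$ estimate, it is the finite-overlap constant $N$ of Definition~\ref{minimally_smooth_boundary} together with the uniform bounds on $\zeta_i$ and its derivatives (controlled by $\epsilon$) that makes the sum square-summable with a constant independent of the number of patches; you allude to this, but it is the precise point at which $\epsilon$ and $N$ enter $C_{\frak{E}}$ and hence where the independence of $C_{\frak{E}}$ from $\mathrm{diam}(D)$ — emphasized in the paper's remark following the theorem — is actually earned. Neither issue is a gap in the approach; your sketch is correct and follows the same route as the cited proof.
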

\begin{remark}
We highlight that, crucially, the constant $C_{\frak{E}}$ appearing in Theorem~\ref{thm-extension} is independent of the measure of the underlying domain $D$.
\end{remark}

Secondly, we now study the regularity of the following Neumann boundary-value problem: find $z$ such that
\begin{equation}\label{eq:ProblemZ}
- \Delta z  = f \quad\ \text{in } D, \qquad
\nabla z \cdot \mathbf{n} = 0 \quad\ \text{on } \partial D,
\end{equation}
with $f \in L^2_0(D) := \{ v \in L^2(D)\ :\ \int_{D} v\ \diff \xvec = 0 \}$.
\begin{theorem} \label{thm_elliptic_regularity}
	Let $D$ be a bounded, open, convex (and therefore Lipschitz) domain. Then, there exists a unique solution $z \in H^2(D) \cap L^2_0(D)$ to the homogeneous Neumann problem \eqref{eq:ProblemZ}. Moreover, the following stability bound holds:
	\begin{equation}
		\|z\|_{H^2(D)} \leq \sqrt{6} \,\big(\| f\|_{L^2(D)} + \frac{1}{\pi}\mbox{\rm diam}(D) \| \nabla z \|_{L^2(D)}\big).
	\label{eqn_elliptic_reg}
	\end{equation}
\end{theorem}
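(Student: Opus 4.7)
The plan is to handle existence/uniqueness, $H^2$-regularity, and the explicit stability constant as three separate steps. For existence and uniqueness I would apply the Lax--Milgram lemma to the bilinear form $a(u,v):=\int_D \nabla u\cdot\nabla v\,\diff \xvec$ on the Hilbert space $W:=H^1(D)\cap L^2_0(D)$. Continuity of $a$ is immediate, and coercivity on $W$ is the Poincaré--Wirtinger inequality, which for a convex domain takes the sharp Payne--Weinberger form $\|v\|_{L^2(D)}\le \frac{1}{\pi}\mbox{diam}(D)\,\|\nabla v\|_{L^2(D)}$ for all $v\in W$. The linear functional $v\mapsto (f,v)_{L^2(D)}$ is continuous on $W$, and since $f\in L^2_0(D)$ the compatibility condition $\int_D f\,\diff\xvec=0$ is built in; thus a unique weak solution $z\in W$ exists.

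To upgrade $z$ to $H^2(D)$, I would invoke the classical Grisvard regularity result for the Poisson--Neumann problem on a convex domain, which gives $z\in H^2(D)$ together with the sharp estimate $|z|_{H^2(D)}\le \|\Delta z\|_{L^2(D)}=\|f\|_{L^2(D)}$. Convexity is essential here: it is exactly what is needed to obtain this bound with constant $1$ and no additional boundary-curvature contribution, whereas a purely Lipschitz hypothesis would not suffice.

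The final step is to assemble $\|z\|_{H^2(D)}^2=\|z\|_{L^2(D)}^2+\|\nabla z\|_{L^2(D)}^2+|z|_{H^2(D)}^2$ and bound each summand in the target form. The first summand is at most $(\tfrac{1}{\pi}\mbox{diam}(D))^2\|\nabla z\|_{L^2(D)}^2$ by Payne--Weinberger, and the third at most $\|f\|_{L^2(D)}^2$ by Grisvard. For the middle term, integration by parts against $z$ (using the homogeneous Neumann boundary condition) yields $\|\nabla z\|_{L^2(D)}^2=\int_D f z\,\diff\xvec$, and then Cauchy--Schwarz, Payne--Weinberger, and Young's inequality give $\|\nabla z\|_{L^2(D)}^2\le \tfrac{1}{2}\|f\|_{L^2(D)}^2+\tfrac{1}{2}(\tfrac{1}{\pi}\mbox{diam}(D))^2\|\nabla z\|_{L^2(D)}^2$. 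Setting $a:=\|f\|_{L^2(D)}$ and $b:=\tfrac{1}{\pi}\mbox{diam}(D)\|\nabla z\|_{L^2(D)}$ and summing delivers $\|z\|_{H^2(D)}^2\le \tfrac{3}{2}(a^2+b^2)\le \tfrac{3}{2}(a+b)^2\le 6(a+b)^2$, i.e., the claimed $\sqrt{6}$-bound.

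The main obstacle is essentially bibliographic rather than technical: one must import the Payne--Weinberger inequality with the sharp constant $\mbox{diam}(D)/\pi$ and the convex-domain $H^2$-regularity estimate $|z|_{H^2(D)}\le \|\Delta z\|_{L^2(D)}$ (with constant one); once both are in hand, the remaining manipulations are elementary and the stated prefactor $\sqrt{6}$ is comfortably loose, safely absorbing the intermediate Young's inequality constants.
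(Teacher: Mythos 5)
Your proof takes a genuinely different route from the paper's, and the divergence matters. You invoke as a black box the sharp seminorm estimate
\[
|z|_{H^2(D)} \leq \|\Delta z\|_{L^2(D)} = \|f\|_{L^2(D)}
\]
for the Neumann problem on a general bounded convex (hence merely Lipschitz) domain $D\subset\mathbb{R}^d$. The paper, by contrast, does \emph{not} use this sharp inequality: it cites \cite[Theorem 3.2.1.3]{Gr1985} only for the \emph{qualitative} membership $z\in H^2(D)$, and then establishes the \emph{quantitative} bound from scratch via a four-step approximation argument — enlarging $D$ to a decreasing sequence of bounded, open, convex $C^2$ domains $\Omega_m \supset D$ (Eggleston's lemma), extending $f$ by zero, applying \cite[Theorem 3.1.2.3]{Gr1985} (which is where $\sqrt{6}$ enters, and which is only valid for $C^2$ domains) to obtain uniform $H^2(\Omega_m)$ bounds on the approximating solutions $u_m$, extracting a weakly $H^2$-convergent subsequence, and identifying the weak limit with $z$. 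The whole point of this scaffolding is that, absent the $C^2$ hypothesis, one cannot directly justify the Grisvard integration-by-parts identity that yields the boundary sign condition, so the sharp constant must be imported from the smooth case by passage to the limit.

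This is precisely the step you dismiss as ``essentially bibliographic rather than technical.'' The seminorm estimate with constant one is indeed classical for convex $C^2$ domains (Grisvard's identity plus the sign of the boundary form), and the paper's Remark after the theorem cites \cite[Theorem 4.3.1.4]{Gr1985} for a \emph{weaker} unit-constant bound, valid only for convex \emph{polygonal} domains in $\mathbb{R}^2$. A clean reference covering the Neumann problem on general (Lipschitz, non-polygonal) convex domains in $\mathbb{R}^d$, $d\geq 3$, with the sharp seminorm constant, is not supplied — and if you unpack how one would prove it, you land exactly on the paper's approximation argument. So you have not really avoided the technical content; you have relocated it into an uncited lemma.

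Your remaining arithmetic is correct and, ironically, would give a \emph{better} prefactor than the paper: your own assembly gives $\|z\|_{H^2(D)}^2 \leq \tfrac{3}{2}(a^2+b^2)$ with $a=\|f\|_{L^2(D)}$, $b=\tfrac{1}{\pi}\operatorname{diam}(D)\|\nabla z\|_{L^2(D)}$ (and dropping Young's inequality you can even reach $a^2+ab+b^2\leq(a+b)^2$, hence constant $1$). That your route beats the paper's $\sqrt{6}$ should have been a warning sign: the paper accepts the looser constant precisely because its $C^2$-domain ingredient (\cite[Theorem 3.1.2.3]{Gr1985}) yields $\sqrt{6}$, not the sharp seminorm bound. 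To make your proposal rigorous, you would need either to supply a precise citation for $|z|_{H^2(D)}\leq\|\Delta z\|_{L^2(D)}$ on general bounded convex domains for the Neumann problem, or to carry out the smooth-domain approximation argument yourself — which is what the paper's proof is.
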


\begin{proof}
	We defer the proof to Appendix~\ref{proof_elliptic_regularity}.
\end{proof}

\begin{remark}
An analogous bound to \eqref{eqn_elliptic_reg}, with constant equal to unity, has been derived in \cite[Theorem 4.3.1.4]{Gr1985} for convex polygonal domains in ${\mathbb R}^2$; there the term
$\frac{1}{\pi}\mbox{\rm diam}(D) \|\nabla z\|_{L^2(\Omega)}$ on the right-hand side of \eqref{eqn_elliptic_reg} is replaced by $\| z \|_{H^1(D)}$.
\end{remark}

\section{Preliminary results}\label{sec:PreResAS}
We first present some preliminary results, which will be employed within the analysis contained in~\Cref{sec:ConNumEst}. For the sake of simplicity of the presentation, we assume that the grids $\mcal[T][\hfine]$ and $\mcal[T][\hcoarse]$ are nested; the extension of the theoretical analysis to the general case $\mcal[T][\hfine] \nsubseteq \mcal[T][\hcoarse]$ is deferred to~\Cref{app:NonNestProof}. 
Here, we introduce the following energy norm:
\begin{equation}\label{eq:DGnorm_2}
\normDG[w][h,\rho][2]:=\int_{\mcal[T][h]} \rho |\nabla_h w|^2\ \diff \xvec + \int_{\mcal[F][h]} \sigma_{h,\rho}|\jump{w}|^2\ \mathrm{d}s.
\end{equation}
The well-posedness of problem~\eqref{eq:dG} with respect to the norm~\eqref{eq:DGnorm_2} is then established in the following lemma, cf.~\cite{CaDoGeHo2017}.
\medskip
\begin{lemma}\label{lem:contcoerc_2}
Suppose that $\mcal[T][h]$ satisfies~\Cref{ass1}; then,
\begin{alignat}{3}
\Aa[h][u_h][v_h]&\lesssim \normDG[u_h][h,\rho]\normDG[v_h][h,\rho]\quad &&\forall\, u_h,v_h \in V_h, \label{eq:ContAh_2}\\
\Aa[h][u_h][u_h]&\gtrsim \normDG[u_h][h,\rho][2]\quad &&\forall\, u_h \in V_h. \label{eq:CoercAh_2}
\end{alignat}
The second bound holds provided that $C_{\sigma}$ appearing in~\eqref{eq:Sigma} is sufficiently large.
\end{lemma}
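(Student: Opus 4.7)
The argument is the standard continuity/coercivity analysis for the SIPDG form written in lifted form, adapted to polytopic elements via \Cref{ass1}. The key auxiliary estimate is the bound on the lifting operator $\mathcal{R}_\rho$: namely, that
\[
\bigl\| \rho^{1/2} \mathcal{R}_\rho(\jump{w}) \bigr\|_{L^2(\Om)}^2 \;\lesssim\; C_{\mathrm{lift}} \int_{\mcal[F][h]} \sigma_{h,\rho} |\jump{w}|^2 \,\mathrm{d}s
\qquad \forall\, w \in V_h,
\]
with $C_{\mathrm{lift}}$ independent of $h$, $p$, and $\rho$. The plan is to first establish this bound, and then feed it into elementary Cauchy--Schwarz / Young arguments.

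\textbf{Step 1 (Lifting bound).} Test \eqref{eq:lifting_R_2} with $\boldsymbol{\eta} = \rho \mathcal{R}_\rho(\jump{w})$ (interpreting $\rho$ elementwise), apply Cauchy--Schwarz on each face, and use the weighted trace/inverse inequality on polytopic elements that follows from \Cref{ass1}: for any $v \in \mathcal{P}_p(\elem)$,
\[
\| v \|_{L^2(F)}^2 \;\lesssim\; \tfrac{p^2 |F|}{|\elem_\flat^F|} \| v \|_{L^2(\elem)}^2
\qquad \forall\, F \subset \partial\elem.
\]
Combined with the definition of the weight $\omega$ and of $\sigma_{h,\rho}$ in \eqref{eq:Sigma}, and after summing over faces, this yields the displayed lifting bound with $C_{\mathrm{lift}} \lesssim 1/C_\sigma$.

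\textbf{Step 2 (Continuity).} For \eqref{eq:ContAh_2}, apply Cauchy--Schwarz separately to each of the four terms in \eqref{eq:Ah}. The two cross terms $\int_\Om \rho\, \nabla_h u_h \cdot \mathcal{R}_\rho(\jump{v_h})\,\diff\xvec$ (and its symmetric counterpart) are bounded by
\[
\bigl\|\rho^{1/2}\nabla_h u_h\bigr\|_{L^2(\Om)} \,\bigl\|\rho^{1/2}\mathcal{R}_\rho(\jump{v_h})\bigr\|_{L^2(\Om)},
\]
and the lifting bound from Step~1 controls the second factor by $\bigl(\int_{\mcal[F][h]}\sigma_{h,\rho}|\jump{v_h}|^2\,\mathrm{d}s\bigr)^{1/2}$. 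Collecting the four contributions gives \eqref{eq:ContAh_2}.

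\textbf{Step 3 (Coercivity).} Setting $v_h = u_h$ in \eqref{eq:Ah} and treating the cross term by Cauchy--Schwarz followed by Young's inequality with a parameter $\varepsilon>0$,
\[
\Bigl| 2\int_\Om \rho\, \nabla_h u_h \cdot \mathcal{R}_\rho(\jump{u_h}) \,\diff\xvec \Bigr|
\;\le\; \varepsilon \bigl\|\rho^{1/2}\nabla_h u_h\bigr\|_{L^2(\Om)}^2 + \tfrac{1}{\varepsilon} \bigl\|\rho^{1/2}\mathcal{R}_\rho(\jump{u_h})\bigr\|_{L^2(\Om)}^2.
\]
Invoking Step~1 to bound the last term by $C_{\mathrm{lift}}/\varepsilon \cdot \int_{\mcal[F][h]}\sigma_{h,\rho}|\jump{u_h}|^2\,\mathrm{d}s$, we arrive at
\[
\Aa[h][u_h][u_h] \;\ge\; (1-\varepsilon) \bigl\|\rho^{1/2}\nabla_h u_h\bigr\|_{L^2(\Om)}^2
+ \bigl(1 - \tfrac{C_{\mathrm{lift}}}{\varepsilon}\bigr)\int_{\mcal[F][h]}\sigma_{h,\rho}|\jump{u_h}|^2\,\mathrm{d}s.
\]
Choosing $\varepsilon \in (0,1)$ fixed (e.g.\ $\varepsilon=1/2$), and recalling that $C_{\mathrm{lift}} \lesssim 1/C_\sigma$, the second bracket is bounded below away from zero provided $C_\sigma$ is chosen sufficiently large; this yields \eqref{eq:CoercAh_2}.

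\textbf{Main obstacle.} The only nontrivial point is Step~1: on polytopic elements one cannot rely on a reference-element argument, so the trace/inverse inequality must be established via the simplex sub-elements $\elem_\flat^F$ from \Cref{ass1}, which is precisely what makes the constants in the lifting bound (and hence in \eqref{eq:CoercAh_2}) independent of the element geometry. Once the polytopic lifting bound is in hand, Steps~2 and~3 reduce to routine Cauchy--Schwarz/Young manipulations.
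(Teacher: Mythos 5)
Your proposal is correct, and it reproduces the standard lifted‐form continuity/coercivity argument for the SIPDG method on polytopic meshes: the paper itself does not prove \Cref{lem:contcoerc_2} but delegates it to \cite{CaDoGeHo2017}, and your three steps (lifting stability via the \Cref{ass1}-based trace/inverse inequality, Cauchy--Schwarz for continuity, Young's inequality plus the lifting bound for coercivity) are precisely the argument carried out there. Two small remarks: the hypothesis that $\rho$ is elementwise constant (Remark~2.1) is essential so that $\boldsymbol{\eta}=\rho\,\mathcal{R}_\rho(\jump{w})$ is an admissible test function in \eqref{eq:lifting_R_2}, and your per-face trace bound is exactly what \Cref{ass1} delivers (it implies the aggregated form stated as \Cref{lem:inversepoly} after summing over $F\subset\partial\elem$ and using $h_\elem\lesssim d|\elem_\flat^F|/|F|$); your dependence $C_{\mathrm{lift}}\lesssim 1/C_\sigma$ and the resulting ``$C_\sigma$ sufficiently large'' conclusion are the right quantitative statement.
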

We also recall the following trace inequality on polytopic domains, introduced in~\cite{CaDoGe2017}.
\begin{lemma}\label{lem:inversepoly}
Suppose that $\mcal[T][h]$ satisfies~\Cref{ass1}; then, the following inequality holds:
\begin{equation}
\normL[v][2][\partial \elem][2]\lesssim \nicefrac{p^2}{h_{\elem}} \, \normL[v][2][\elem][2] \quad \forall\, v \in \mcal[P][p](\elem)\ \forall\, \elem \in \mcal[T][\hfine].
\end{equation}
\end{lemma}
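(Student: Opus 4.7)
The plan is to reduce the inverse trace inequality on a general polytopic element $\elem$ to the classical inverse trace inequality on a simplex, by exploiting the sub-simplices $\elem_\flat^F$ furnished by \Cref{ass1}.

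First I would fix a face $F \subset \partial \elem$, consider the associated $d$-dimensional simplex $\elem_\flat^F \subset \elem$ given by \Cref{ass1}, and recall the standard inverse trace inequality on a simplex, which states that for any $w \in \mcal[P][p](\elem_\flat^F)$,
\begin{equation*}
\normL[w][2][F][2] \lesssim \frac{p^2 |F|}{|\elem_\flat^F|}\, \normL[w][2][\elem_\flat^F][2].
\end{equation*}
Since $v \in \mcal[P][p](\elem)$ implies that $v|_{\elem_\flat^F} \in \mcal[P][p](\elem_\flat^F)$, I may apply this bound with $w = v|_{\elem_\flat^F}$.

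Next I would use the key geometric estimate from \Cref{ass1}, namely $h_{\elem} \lesssim d\,|\elem_\flat^F|/|F|$, which rearranges to $|F|/|\elem_\flat^F| \lesssim d/h_{\elem}$. Combining this with the simplex trace bound, and using $\elem_\flat^F \subset \elem$ to estimate $\normL[v][2][\elem_\flat^F][2] \le \normL[v][2][\elem][2]$, yields
\begin{equation*}
\normL[v][2][F][2] \lesssim \frac{p^2}{h_{\elem}}\, \normL[v][2][\elem][2].
\end{equation*}
Finally I would sum over all faces $F \subset \partial \elem$; since, by \Cref{ass1}, the simplices $\{\elem_\flat^F\}_{F \subset \partial \elem}$ are non-overlapping subsets of $\elem$, there is a bounded number of faces per element (absorbed into the hidden constant), which yields the stated result.

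The main obstacle here is purely conceptual rather than technical: one must ensure that the simplex trace inequality on $\elem_\flat^F$ is applied to a function that is a polynomial on the simplex. This is automatic because $v$ is a polynomial on the whole of $\elem$ and $\elem_\flat^F \subset \elem$, so no subtle extension argument is needed. The cardinality bound on the number of faces is subsumed into $\lesssim$ by the uniform shape-regularity implicit in \Cref{ass1,ass2}.
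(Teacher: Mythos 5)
The paper does not present a proof of this lemma; it cites \cite{CaDoGe2017}, so your reconstruction can only be compared against the standard argument in that reference. Your overall strategy — reduce to the classical $hp$-trace inverse inequality on the face simplices $\elem_\flat^F$, use the volume-ratio bound $h_\elem \lesssim d|\elem_\flat^F|/|F|$ from \Cref{ass1}, and sum over faces — is indeed the right one.

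There is, however, a genuine flaw in the way you close the argument. You pass to $\|v\|_{L^2(\elem)}$ \emph{before} summing over faces, obtaining $\|v\|_{L^2(F)}^2 \lesssim \frac{p^2}{h_\elem}\|v\|_{L^2(\elem)}^2$ for each $F$, and then sum, appealing to a "bounded number of faces per element" to absorb the cardinality into the constant. But no such bound is part of \Cref{ass1} (nor of \Cref{ass2}), and a central feature of the polytopic framework of Cangiani, Dong, Georgoulis, and Houston is precisely that elements may have an unbounded number of (potentially degenerate) faces. Invoking a uniform face count would silently restrict the result. The correct order is to keep the local norms inside the sum and only then exploit the non-overlap of the $\elem_\flat^F$'s: from the per-face simplex trace inequality and the geometric estimate one has
\begin{equation}
\|v\|_{L^2(\partial\elem)}^2 \;=\; \sum_{F\subset\partial\elem}\|v\|_{L^2(F)}^2 \;\lesssim\; \frac{p^2}{h_\elem}\sum_{F\subset\partial\elem}\|v\|_{L^2(\elem_\flat^F)}^2 \;\le\; \frac{p^2}{h_\elem}\,\|v\|_{L^2(\elem)}^2,
\end{equation}
where the final step uses only that the $\elem_\flat^F$'s are pairwise disjoint subsets of $\elem$, with no bound on their number. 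You had the disjointness in hand — you simply invoked it at the wrong stage and then conflated it with a cardinality bound that is not (and should not be) assumed.
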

Next we also recall a result regarding the approximation operator presented in \cite[Theorem 5.2]{CaGeHo2014} and \cite[Lemma 5.5]{CaDoGe2017}, to which we refer for details. However, for the purposes of this article, we consider a slight generalization: given a connected subdomain $D\subseteq\Omega$, we assume that $D$ is formed from the union of a subset of elements $\kappa\in \mcal[T][h]$; we denote the collection of such elements by $\mcal[T][h,D]$, i.e., $\bar{D} := \cup_{\kappa\in \mcal[T][h,D]} \bar{\kappa}$. With this definition, the approximant relies on the properties of the extension operator ${\mathfrak E}: H^s(D) \rightarrow H^s(\mathbb{R}^d),\ s \in \mathbb{N}_0$, introduced in Theorem~\ref{thm-extension}. Hence, following \cite[Theorem 5.2]{CaGeHo2014} and \cite[Lemma 5.5]{CaDoGe2017}, we deduce the following lemma.
\begin{lemma}\label{lem:interpDG_loc}
 Suppose that~\Cref{ass2} is satisfied, and let $v \in L^2(D)$ be such that, for some $k \ge 0$, $v|_{\elem} \in H^k(\elem)$ and ${\mathfrak E}v|_{\mcal[S][\elem]} \in H^k(\mcal[S][\elem])$ for $\elem \in \mcal[T][h,D]$, with $\mcal[S][\elem] \in \mcal[T]^{\#}_h$ as defined in~\Cref{ass2}. Then, there exists a projection operator $\Pi_{h,\elem}: L^2(D)\rightarrow \mcal[P][p](\elem)$ such that
\begin{align}
\normH[v-\Pi_{h,\elem} v][l][\elem] & \lesssim  \frac{h^{s-l}}{p^{k-l}} \normH[{\mathfrak E} v][k][\mcal[S][\elem]], \quad \ 0 \le l \le k.\\
\| v-\Pi_{h,\elem} v \|_{L^2(\partial \elem)} & \lesssim \frac{h^{s - \nicefrac{1}{2}}}{p^{k - \nicefrac{1}{2}}}  \normH[{\mathfrak E} v][k][\mcal[S][\elem]], \quad \ k \ge 1,
\end{align}
where $s:=\min\{p+1,k\}$ and $p \geq 1.$
\end{lemma}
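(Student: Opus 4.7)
The plan is to reduce the polytopic approximation problem to a classical $hp$-polynomial approximation problem on a shape-regular simplex, then transfer the resulting estimates via the covering guaranteed by Assumption~\ref{ass2}. For each $\elem \in \mcal[T][h,D]$, let $\mcal[S][\elem] \supseteq \elem$ be the shape-regular simplex from Assumption~\ref{ass2}, with $\mathrm{diam}(\mcal[S][\elem]) \lesssim h_\elem$. Since $\mathfrak{E}v|_{\mcal[S][\elem]} \in H^k(\mcal[S][\elem])$ by hypothesis, I would apply standard Babu\v{s}ka--Suri $hp$-polynomial approximation on $\mcal[S][\elem]$ (via affine pull-back to a reference simplex, with constants controlled purely by shape-regularity) to obtain a polynomial $q_\elem \in \mcal[P][p](\mcal[S][\elem])$ satisfying
\[
\|\mathfrak{E}v - q_\elem\|_{H^l(\mcal[S][\elem])} \lesssim \frac{h^{s-l}}{p^{k-l}}\|\mathfrak{E}v\|_{H^k(\mcal[S][\elem])}, \qquad 0 \le l \le k,
\]
with $s = \min\{p+1,k\}$, and then set $\Pi_{h,\elem}v := q_\elem|_\elem \in \mcal[P][p](\elem)$.

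The first (interior) estimate is then immediate: since $\elem \subseteq \mcal[S][\elem]$ and $v = \mathfrak{E}v$ on $\elem$, restriction gives $\|v - \Pi_{h,\elem}v\|_{H^l(\elem)} = \|\mathfrak{E}v - q_\elem\|_{H^l(\elem)} \le \|\mathfrak{E}v - q_\elem\|_{H^l(\mcal[S][\elem])}$, and the preceding bound applies.

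For the trace estimate, which requires $k \ge 1$, I would invoke Assumption~\ref{ass1} to associate to each face $F \subset \partial \elem$ a shape-regular $d$-simplex $\elem_\flat^F \subset \elem \subseteq \mcal[S][\elem]$ whose boundary contains $\bar F$ and whose diameter is comparable to $h_\elem$. On this simplex I would apply the \emph{multiplicative} trace inequality
\[
\|w\|_{L^2(F)}^2 \lesssim h_\elem^{-1}\|w\|_{L^2(\elem_\flat^F)}^2 + \|w\|_{L^2(\elem_\flat^F)}\|\nabla w\|_{L^2(\elem_\flat^F)}
\]
to $w = \mathfrak{E}v - q_\elem$, and then use the $l=0$ and $l=1$ cases of the interior bound (noting $\elem_\flat^F \subseteq \mcal[S][\elem]$) to conclude that $\|v - \Pi_{h,\elem}v\|_{L^2(F)}^2 \lesssim (h^{2s-1}/p^{2k-1})\|\mathfrak{E}v\|_{H^k(\mcal[S][\elem])}^2$. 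Summing over the faces $F \subset \partial \elem$ then yields the stated trace bound.

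The main technical point will be the trace step: a naive \emph{additive} trace inequality of the form $\|w\|_{L^2(F)}^2 \lesssim h^{-1}\|w\|_{L^2}^2 + h\|\nabla w\|_{L^2}^2$ would only yield the rate $h^{s-1/2}/p^{k-1}$, losing a factor $p^{1/2}$ relative to the claim; the multiplicative form (equivalently, an $H^{1/2}$-interpolation between the $L^2$ and $H^1$ approximation bounds) is essential to recover the sharp exponent $p^{k-1/2}$. Beyond this, all remaining manipulations are standard and element-local; no global property of the subdomain $D$ is used beyond the hypothesis $\mathfrak{E}v|_{\mcal[S][\elem]} \in H^k(\mcal[S][\elem])$, so the generalization from a single-element statement in the cited references to a subdomain $D$ is essentially cosmetic.
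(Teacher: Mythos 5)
Your strategy --- Babu\v{s}ka--Suri $hp$-approximation on the covering simplex $\mcal[S][\elem]$ guaranteed by Assumption~\ref{ass2}, restriction to $\elem$, and a multiplicative trace inequality on the simplices $\elem_\flat^F$ of Assumption~\ref{ass1} --- is precisely the argument behind the references the paper cites for this lemma, and your identification of the multiplicative trace inequality as the point that recovers the sharp power $p^{k-1/2}$ is on target. Two adjustments are needed, though. First, Assumption~\ref{ass1} does not grant shape-regularity of $\elem_\flat^F$; it only controls the height ratio $h_\elem \lesssim d\,|\elem_\flat^F|/|F|$. Fortunately shape-regularity is not required: applying the divergence theorem to $x \mapsto (x - v_0)\,w^2$, with $v_0$ the vertex of $\elem_\flat^F$ opposite $F$, yields
\[
\|w\|_{L^2(F)}^2 \;\le\; \frac{|F|}{|\elem_\flat^F|}\,\|w\|_{L^2(\elem_\flat^F)}^2 \;+\; \frac{2\,\mathrm{diam}(\elem_\flat^F)\,|F|}{d\,|\elem_\flat^F|}\,\|w\|_{L^2(\elem_\flat^F)}\,\|\nabla w\|_{L^2(\elem_\flat^F)},
\]
and Assumption~\ref{ass1} together with $\mathrm{diam}(\elem_\flat^F) \le h_\elem$ gives exactly your multiplicative form without any shape-regularity hypothesis on $\elem_\flat^F$.

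Second, and this one is a genuine gap: you invoke the $L^2(\mcal[S][\elem])$- and $H^1(\mcal[S][\elem])$-approximation bounds \emph{before} summing over the faces $F \subset \partial\elem$, producing a per-face estimate of size $(h^{2s-1}/p^{2k-1})\,\|\mathfrak{E}v\|_{H^k(\mcal[S][\elem])}^2$ and then summing. This introduces the number of faces of $\elem$ as an uncontrolled multiplicative factor, which cannot be absorbed into the hidden constant because Assumption~\ref{ass1} places no upper bound whatsoever on the number of faces of a polytopic element (indeed the supporting references are designed to handle arbitrarily many, arbitrarily small faces). The robust order of operations is the reverse: sum the multiplicative trace inequalities over $F \subset \partial\elem$ \emph{first}, using the pairwise disjointness of the $\elem_\flat^F \subset \elem$ and Cauchy--Schwarz on the cross term to arrive at $\|w\|_{L^2(\partial\elem)}^2 \lesssim h_\elem^{-1}\|w\|_{L^2(\elem)}^2 + \|w\|_{L^2(\elem)}\|\nabla w\|_{L^2(\elem)}$ with a face-count-independent constant, and only then insert the $l=0$ and $l=1$ approximation bounds on $\mcal[S][\elem]$. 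With that reordering your proof is complete and matches the one the paper refers to.
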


\begin{remark}[Global approximant]\label{rmrk:GloApp}
Given that~\Cref{lem:interpDG_loc} holds for all $\elem \in \mcal[T][h,D]$ we can define the global approximation operator $\Pi_{h}: L^2(D)\rightarrow V_h|_D$ such that $\Pi_h|_{\elem} = \Pi_{h,\elem}$. If $v \in H^k(D)$, $k \ge 0$, then, by noting~\Cref{ass2}, together with Theorem~\ref{thm-extension}, the following bound holds:
\begin{equation}\label{eq:interpDG}
\normH[v-\Pi_h v][l][\mcal[T][h,D]] \lesssim  \frac{h^{s-l}}{p^{k-l}} \normH[v][k][D], \quad 0 \le l \le k.
\end{equation}
\end{remark}

A key ingredient in our analysis is the conforming approximant defined in~\cite{AnHoSm2016}. In particular, to ensure that the preconditioner is scalable in the presence of jumps in the diffusion coefficient, here we define the conforming approximant in a slightly different manner, in order to obtain an approximation of discontinuous discrete functions $v_h \in V_h$ on each local domain $\mcal[D][j] \in \mcal[T][\hcoarse],\ j=1,\dots,N_{\hcoarse}$. To this end, we adopt the following assumption on the coarse mesh $\mcal[T][\hcoarse]$.
\begin{assumption}\label{ass:Dj1}
We assume that $\mcal[D][j] \in \mcal[T][\hcoarse]$ is a convex polytope with Lipschitz boundary $\partial \mcal[D][j]$, for any $j=1,\dots,N_{\hcoarse}$, and that $|\mcal[D][j]| \eqsim \hcoarse^d$, $j=1,\dots,N_{\hcoarse}$, where $| \mcal[D][j] |$ represents the Hausdorff measure of $\mcal[D][j]$.
\end{assumption}
%
For the sake of the analysis, we define the following local grids generated from $\mcal[T][h]$ and $\mcal[T][\hcoarse]$:
\begin{align}\label{eq:Thj}
\begin{aligned}
  \mcal[T][h,j] := \{ \elem \in \mcal[T][\hfine]: \elem \subset \mcal[D][j],\ \mcal[D][j] \in \mcal[T][\hcoarse]\},
\, \ \mcal[F][h,j]^I := \{ F \in \mcal[F][h]: F \subset \mcal[D][j],\ \mcal[D][j] \in \mcal[T][\hcoarse]\}, 
\\
\mcal[F][h,j]^B := \{F\in \mcal[F][h]: F \subset \partial \mcal[D][j],\ \mcal[D][j] \in \mcal[T][\hcoarse]\},\ \
\mcal[F][h,j] := \mcal[F][h,j]^I \cup \mcal[F][h,j]^B, ~\hspace{2.2cm}
\end{aligned}
\end{align}
for $j=1,\dots,N_{\hcoarse}$.
\begin{remark}
Note that since the grids $\mcal[T][\hfine]$ and $\mcal[T][\hcoarse]$ are nested, i.e., $\mcal[T][h,j] \subseteq \mcal[T][\hfine]$, $j=1,\dots,N_{\hcoarse}$, $\mcal[T][h,j]$ also satisfies Assumptions~\ref{ass1} and~\ref{ass2}, for all $j=1,\dots,N_{\hcoarse}$.
\end{remark}
The local conforming approximant is then defined as follows.
\begin{definition}\label{def:H_of_v_h}
Let $\mcal[D][j] \in \mcal[T][\hcoarse]$ satisfy~\Cref{ass:Dj1}, $j=1,\dots,N_{\hcoarse}$, and let the discrete gradient operator of $v_h \in V_h$ inside $\mcal[D][j]$ be defined by the equality $\mathcal{G}_{h,j}(v_h) = (\nabla_h v_h + \mcal[R][1](\mcal[J][j](v_{\hfine}))) \idj$, $j=1,\dots,N_{\hcoarse}$, where $\idj$ is the characteristic function on $\mcal[D][j]$, while $\mcal[J][j]: V_{\hfine} \rightarrow [L^1(\mcal[F][h])]^d$ is defined by
\begin{equation}\label{eq:Jj}
\mcal[J][j](v_{\hfine})|_{F} := \jump{v_{\hfine}}|_F \ \ \text{if }F\in \mcal[F][h,j]^I, \qquad
\mcal[J][j](v_{\hfine})|_{F} := 0 \ \ \text{otherwise,}
\end{equation}
where $\mcal[F][h,j]^I$ is the set defined in~\eqref{eq:Thj}. Here, $\mcal[R][1]: [L^1(\mcal[F][h])]^d \rightarrow [V_h]^d$ is the lifting operator with $\rho=1$ and $\omega = \nicefrac{1}{2}$ in its definition given in~\eqref{eq:lifting_R_2}. Then, $\widetilde{v}_{h,j}$ is defined as the solution of the following problem: find $\widetilde{v}_{h,j} \in \widetilde{V}_j = H^1(\mcal[D][j]) \cap \L^2_0(\mcal[D][j])$ such that
\begin{equation}\label{eq:HdefInChapter4InChapter4}
\int_{\mcal[D][j]} \nabla \widetilde{v}_{h,j} \cdot \nabla w\ \diff \xvec = \int_{\mcal[D][j]} \mathcal{G}_{h,j}(v_h) \cdot \nabla w\ \diff \xvec \quad \forall\, w \in \widetilde{V}_j.
\end{equation}
\end{definition}

\begin{remark}[Poincar\'e's inequality]\label{rmrk:Poincare} Since $\widetilde{v}_{h,j} \in \widetilde{V}_j$ and $\mcal[D][j]$ satisfies~\Cref{ass:Dj1}, we note that, cf.~\cite[Corollary 3.4]{ZhQi2005}, $\| \widetilde{v}_{h,j} \|_{L^2(\mcal[D][j])} \le C_p \| \nabla \widetilde{v}_{h,j} \|_{L^2(\mcal[D][j])}$, with $C_p \lesssim {\rm diam}(\mcal[D][j])^{1+\nicefrac{d}{2}} | \mcal[D][j] |^{-\nicefrac{1}{2}}$. Thereby, invoking~\Cref{ass:Dj1} gives
\begin{equation}\label{eq:Poincare}
\| \widetilde{v}_{h,j} \|_{H^1(\mcal[D][j])}^2 \le (1 + C_p^2) \| \nabla \widetilde{v}_{h,j} \|_{L^2(\mcal[D][j])}^2\lesssim \| \nabla \widetilde{v}_{h,j} \|_{L^2(\mcal[D][j])}^2,
\end{equation}
where we also made use of the fact that ${\rm diam}(\mcal[D][j]) \eqsim \hcoarse \lesssim 1$.
\end{remark}

\noindent By proceeding as in~\cite{AnHoSm2016} we prove the following approximation result.
\begin{theorem}\label{th:Hbound}
Let $\mcal[D][j] \in \mcal[T][\hcoarse]$ satisfy~\Cref{ass:Dj1}, $j=1,\dots,N_{\hcoarse}$. Given $v_h \in V_{h}$, we take $\widetilde{v}_{h,j} \in \widetilde{V}_j$, $j=1,\dots,N_{\hcoarse}$, to be the conforming approximant given in~\Cref{def:H_of_v_h} and we define
$$\overline{v}_{h,j} := \widetilde{v}_{h,j} + \frac{1}{|\mcal[D][j]|} \int_{\mcal[D][j]} v_h \diff \xvec.$$
Then, the following approximation and stability bounds hold:
\begin{align}
\normL[v_h -  \overline{v}_{h,j} ][2][\mcal[D][j]] &\lesssim \frac{h}{p} \normL[\sigma_{h,1}^{\nicefrac{1}{2}} \jump{v_h}][2][\mcal[F][h,j]^I],\label{eq:HboundL2}\\
| \widetilde{v}_{h,j} |_{H^1(\Om)}^2 &\lesssim \| \nabla_h v_h \|_{L^2(\mcal[D][j])}^2 + \normL[\sigma_{h,1}^{\nicefrac{1}{2}} \jump{v_h}][2][\mcal[F][h,j]^I]^2,\label{eq:HboundH1}
\end{align}
for $j=1,\dots,N_{\hcoarse}$.
\end{theorem}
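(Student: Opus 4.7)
The plan is to prove the $H^1$-seminorm bound \eqref{eq:HboundH1} directly by an energy argument, and then derive the $L^2$ bound \eqref{eq:HboundL2} by an Aubin--Nitsche duality against the Neumann problem studied in \Cref{thm_elliptic_regularity}.

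For \eqref{eq:HboundH1}, testing the defining relation \eqref{eq:HdefInChapter4InChapter4} with $w=\widetilde{v}_{h,j}\in \widetilde{V}_j$ and applying Cauchy--Schwarz yields $\|\nabla \widetilde{v}_{h,j}\|_{L^2(\mcal[D][j])}\le \|\mathcal{G}_{h,j}(v_h)\|_{L^2(\mcal[D][j])}$; the triangle inequality then reduces matters to bounding the lifting $\mcal[R][1](\mcal[J][j](v_h))$. The standard stability estimate obtained by testing \eqref{eq:lifting_R_2} against the lifting itself and invoking the inverse trace inequality of \Cref{lem:inversepoly} gives $\|\mcal[R][1](\mcal[J][j](v_h))\|_{L^2(\mcal[D][j])}^2 \lesssim \|\sigma_{h,1}^{\nicefrac{1}{2}}\jump{v_h}\|_{L^2(\mcal[F][h,j]^I)}^2$, where I use that $\mcal[J][j](v_h)$ is supported in $\mcal[F][h,j]^I$ so that the lifting remains supported inside $\mcal[D][j]$.

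For \eqref{eq:HboundL2}, set $\psi:=v_h-\overline{v}_{h,j}$. Since $\widetilde{v}_{h,j}\in L^2_0(\mcal[D][j])$, the definition of $\overline{v}_{h,j}$ enforces $\int_{\mcal[D][j]}\psi\,\diff \xvec=0$, and I may pose the dual Neumann problem $-\Delta z=\psi$ in $\mcal[D][j]$, $\nabla z\cdot\mathbf{n}=0$ on $\partial \mcal[D][j]$, with $z\in H^2(\mcal[D][j])\cap L^2_0(\mcal[D][j])$. Convexity of $\mcal[D][j]$ (\Cref{ass:Dj1}) justifies the use of \Cref{thm_elliptic_regularity}; absorbing the $\|\nabla z\|_{L^2(\mcal[D][j])}$-term appearing on the right of \eqref{eqn_elliptic_reg} via the Poincar\'e inequality for $L^2_0$-functions and the uniform bound ${\rm diam}(\mcal[D][j])\lesssim 1$ produces $\|z\|_{H^2(\mcal[D][j])}\lesssim\|\psi\|_{L^2(\mcal[D][j])}$. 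I then write $\|\psi\|_{L^2(\mcal[D][j])}^2=-\int_{\mcal[D][j]}\psi\Delta z\,\diff \xvec$ and integrate by parts element-by-element over $\mcal[T][h,j]$, using $\jump{\nabla z}=\mathbf{0}$ on interior faces, the homogeneous Neumann condition on $\partial \mcal[D][j]$, and the fact that $\overline{v}_{h,j}\in H^1(\mcal[D][j])$ implies $\jump{\overline{v}_{h,j}}=\mathbf{0}$, to obtain
\begin{equation*}
\|\psi\|_{L^2(\mcal[D][j])}^2 = \int_{\mcal[D][j]}(\nabla_h v_h-\nabla\widetilde{v}_{h,j})\cdot\nabla z\,\diff \xvec - \int_{\mcal[F][h,j]^I}\jump{v_h}\cdot\average{\nabla z}_{\nicefrac{1}{2}}\,\mathrm{d}s.
\end{equation*}

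Testing \eqref{eq:HdefInChapter4InChapter4} with $w=z\in\widetilde{V}_j$ rewrites the bulk term, and the $L^2$-orthogonality of $\nabla z-\Pi_h\nabla z$ to the lifting $\mcal[R][1](\mcal[J][j](v_h))\in [V_h]^d$ (with $\Pi_h$ the projection of \Cref{lem:interpDG_loc} applied componentwise), together with the lifting identity \eqref{eq:lifting_R_2}, then replaces $\nabla z$ by $\Pi_h\nabla z$ inside the lifting integral. The two bulk contributions cancel, leaving $\|\psi\|_{L^2(\mcal[D][j])}^2=-\int_{\mcal[F][h,j]^I}\jump{v_h}\cdot\average{\nabla z-\Pi_h\nabla z}_{\nicefrac{1}{2}}\,\mathrm{d}s$. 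A weighted Cauchy--Schwarz inequality with weight $\sigma_{h,1}\eqsim p^2/h$, combined with the face approximation bound of \Cref{lem:interpDG_loc} applied componentwise to $\nabla z\in [H^1(\mcal[D][j])]^d$ (with $k=1$, $l=0$) and the regularity estimate $\|z\|_{H^2(\mcal[D][j])}\lesssim\|\psi\|_{L^2(\mcal[D][j])}$, produces $\|\psi\|_{L^2(\mcal[D][j])}\lesssim (h/p^{\nicefrac{3}{2}})\|\sigma_{h,1}^{\nicefrac{1}{2}}\jump{v_h}\|_{L^2(\mcal[F][h,j]^I)}$ after one cancellation of $\|\psi\|_{L^2(\mcal[D][j])}$; this implies the stated $h/p$ rate.

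The principal obstacle is engineering the precise cancellation in the duality identity so that only an interface residual of order $h/p$ remains; the dual role of $\Pi_h\nabla z$ (as the lifting's $[V_h]^d$ test function and as an $L^2$-orthogonal substitute for $\nabla z$) must be handled carefully. A secondary delicate point is to ensure the $H^2$-regularity constant in \Cref{thm_elliptic_regularity} is mesh-independent, which follows from convexity (\Cref{ass:Dj1}) and ${\rm diam}(\mcal[D][j])\eqsim H\lesssim 1$.
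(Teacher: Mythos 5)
Your outline matches the paper's strategy quite closely: the $H^1$-stability bound is obtained exactly as in the paper (test \eqref{eq:HdefInChapter4InChapter4} with $w=\widetilde{v}_{h,j}$, decompose $\mathcal{G}_{h,j}$, and derive the lifting stability bound $\|\mcal[R][1](\mcal[J][j](v_h))\|_{L^2(\mcal[D][j])}\lesssim\|\sigma_{h,1}^{\nicefrac{1}{2}}\jump{v_h}\|_{L^2(\mcal[F][h,j]^I)}$), and the $L^2$ bound is obtained by the same duality argument against the Neumann problem with the same integration-by-parts identity. However, your treatment of the bulk term in the duality argument rests on a false premise.

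You claim that the bulk contribution $\int_{\mcal[D][j]}\mcal[R][1](\mcal[J][j](v_h))\cdot\nabla z\,\diff \xvec$ can be replaced by $\int_{\mcal[D][j]}\mcal[R][1](\mcal[J][j](v_h))\cdot\Pi_h\nabla z\,\diff \xvec$ using ``$L^2$-orthogonality of $\nabla z-\Pi_h\nabla z$ to $[V_h]^d$,'' where $\Pi_h$ is the $hp$-approximation operator of \Cref{lem:interpDG_loc}. That operator is \emph{not} the $L^2$-projection onto $V_h$; it is the extension-based approximant of \cite{CaGeHo2014,CaDoGe2017}. The paper itself distinguishes the two in \Cref{app:NonNestProof}, where the $L^2$-projection is denoted $\Pi^0$ and is compared against $\Pi_h$ using best approximation (``$\|v-\Pi^0 v\|_{L^2}\le\|v-w\|_{L^2}$ for all $w\in V_h$''), which would be an equality were they the same operator. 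Consequently the bulk term does not vanish after replacing $\nabla z$ by $\Pi_h\nabla z$, and your claimed $h/p^{\nicefrac{3}{2}}$ rate does not follow. If instead you used $\Pi^0$ so as to have genuine orthogonality, you would lose the optimal face approximation bound of \Cref{lem:interpDG_loc} (which is proved only for $\Pi_h$); bounding $\|\nabla z-\Pi^0\nabla z\|_{L^2(\partial\elem)}$ via the triangle inequality and \Cref{lem:inversepoly} costs a factor $p^{\nicefrac{1}{2}}$, putting you back at $h/p$.

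The fix is to do what the paper does: introduce an arbitrary $z_h\in V_h|_{\mcal[D][j]}$ and, using the lifting identity, add and subtract $\nabla_h z_h$ in both the bulk and face terms, which yields
\begin{equation}
\|\psi\|_{L^2(\mcal[D][j])}^2 = \int_{\mcal[F][h,j]^I}\jump{v_h}\cdot\average{\nabla_h z_h-\nabla z}_{\nicefrac{1}{2}}\,\diff s - \int_{\mcal[D][j]}\mcal[R][1](\mcal[J][j](v_h))\cdot(\nabla z-\nabla_h z_h)\,\diff \xvec.
\end{equation}
Then estimate the bulk term via Cauchy--Schwarz and the lifting stability bound you already derived, and estimate the face term via Cauchy--Schwarz in the $\sigma_{h,1}$-weighted norms; finally set $z_h=\Pi_h z$ and apply \Cref{lem:interpDG_loc} together with \Cref{thm_elliptic_regularity}. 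Both contributions are of order $h/p$, giving exactly \eqref{eq:HboundL2}.
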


\begin{proof}
Given $\mcal[D][j] \in \mcal[T][\hcoarse]$, $j=1,\ldots,N_H$, let $z \in H^2(\mcal[D][j]) \cap L^2_0(\mcal[D][j])$ be the solution of problem~\eqref{eq:ProblemZ} posed on the domain $\mcal[D][j]$, i.e., $D=\mcal[D][j]$, with $f=v_h   - \overline{v}_{h,j}$. Then, by employing integration by parts we obtain 
\begin{align}
\| v_h   - \overline{v}_{h,j} \|_{L^2(\mcal[D][j])}^2 & = - \int_{\mcal[D][j]} (v_h - \overline{v}_{h,j} )\ \Delta z\ \diff \xvec \\
& = \int_{\mcal[D][j]} (\nabla_h v_h - \nabla \widetilde{v}_{h,j} ) \cdot \nabla z\ \diff \xvec\ -\ \int_{\mcal[F][h,j]^I} \nabla z \cdot \jump{v_h}\ \diff s\\
& = - \int_{\mcal[D][j]} \mcal[R][1](\mcal[J][j](v_{\hfine})) \cdot \nabla z\ \diff \xvec  \ \ \ -\int_{\mcal[F][h,j]^I} \nabla z \cdot \jump{v_h}\ \diff s,
\end{align}
where we have also used the facts that $\nabla \overline{v}_{h,j} = \nabla \widetilde{v}_{h,j}$, $\nabla z \cdot \mathbf{n}|_F = 0$ if $F \subset \partial \mcal[D][j]$, $\jump{\overline{v}_{h,j}}|_F = \mathbf{0}$ for all $F\in \mcal[F][h,j]^I$, since $\overline{v}_{h,j} \in H^1(\mcal[D][j])$, and that $\mcal[G][h,j](v_h) = (\nabla_h v_h + \mcal[R][1](\mcal[J][j](v_{\hfine}))\idj$ together with the definition of $\widetilde{v}_{h,j}$, cf.~\eqref{eq:HdefInChapter4InChapter4}. Using the definition of $\mcal[R][1]$ and $\mcal[J][j]$, cf.~\eqref{eq:lifting_R_2} and~\eqref{eq:Jj}, respectively, for any $z_h \in V_h|_{\mcal[D][j]}$, we have
\begin{align}
\| v_h  - \overline{v}_{h,j} \|_{L^2(\mcal[D][j])}^2
& = - \int_{\mcal[F][h,j]^I} \nabla z \cdot \jump{v_h}\ \diff s\ - \int_{\mcal[D][j]} \mcal[R][1](\mcal[J][j](v_{\hfine})) \cdot \nabla z\ \diff \xvec \\
& = -\ \int_{\mcal[F][h,j]^I} \nabla z \cdot \jump{v_h}\ \diff s \
 + \int_{\mcal[F][h,j]^I} \jump{v_h} \cdot \average{\nabla_h z_h}_{\nicefrac{1}{2}}\ \diff s \\
& \quad \ \! -\ \int_{\mcal[D][j]} \mcal[R][1](\mcal[J][j](v_{\hfine})) \cdot ( \nabla z - \nabla_h z_h )\ \diff \xvec\\
& = \int_{\mcal[F][h,j]^I} \jump{v_h} \cdot \average{ \nabla_h z_h - \nabla z}_{\nicefrac{1}{2}}\ \diff s \\
& \quad\  - \int_{\mcal[D][j]} \mcal[R][1](\mcal[J][j](v_{\hfine})) \cdot ( \nabla z - \nabla_h z_h )\ \diff \xvec;
\end{align}
here we have used that, since $z \in H^2(\mcal[D][j])$, $\average{\nabla z}_{\nicefrac{1}{2}}|_F = \nabla z|_F$, $F\in \mcal[F][h,j]^I$. Hence, we get
\begin{align}
\| v_h - \overline{v}_{h,j} \|_{L^2(\mcal[D][j])}^2 & \lesssim \| \mcal[R][1](\mcal[J][j](v_{\hfine})) \|_{L^2(\mcal[D][j])} \|  \nabla z - \nabla_h z_h \|_{L^2(\mcal[D][j])}\ \\
& \quad + \| \sigma_{h,1}^{\nicefrac{1}{2}} \jump{v_h} \|_{L^2(\mcal[F][h,j]^I)} \| \sigma_{h,1}^{-\nicefrac{1}{2}} \average{ \nabla z - \nabla_h z_h }_{\nicefrac{1}{2}}\ \|_{L^2(\mcal[F][h,j]^I)}. \label{eq:ThisEquationInThisProof}
\end{align}
The first term on the right-hand side of~\eqref{eq:ThisEquationInThisProof} can be written as follows:
\begin{align}\label{eq:HereUseDefRrho}
\begin{aligned}
\| \mcal[R][1](\mcal[J][j](v_{\hfine}) ) \|_{L^2(\mcal[D][j])}^2 & \le \int_{\Om} \mcal[R][1](\mcal[J][j](v_{\hfine})) \cdot \mcal[R][1](\mcal[J][j](v_{\hfine}))\ \diff \xvec \\
 & = - \int_{\mcal[F][h]} (\mcal[J][j](v_{\hfine}) )\cdot \average{ \mcal[R][1](\mcal[J][j](v_{\hfine})) }_{\nicefrac{1}{2}}\ \diff s \\
 & = - \int_{\mcal[F][h,j]^I} \jump{v_h} \cdot \average{ \mcal[R][1](\mcal[J][j](v_{\hfine})) }_{\nicefrac{1}{2}}\ \diff s,
\end{aligned}
\end{align}
where we have also used the definitions of $\mcal[R][1]$ and $\mcal[J][j]$, cf.~\eqref{eq:lifting_R_2} and~\eqref{eq:Jj}, respectively. Then, from~\eqref{eq:HereUseDefRrho} and the Cauchy--Schwarz inequality we obtain the following bound:
\begin{align}
\| \mcal[R][1]& (\mcal[J][j](v_{\hfine})) \|_{L^2(\mcal[D][j])}^2 \le \| \sigma_{h,1}^{\nicefrac{1}{2}} \jump{v_h} \|_{L^2(\mcal[F][h,j]^I)}\ \| \sigma_{h,1}^{-\nicefrac{1}{2}} \average{ \mcal[R][1](\mcal[J][j](v_{\hfine})) }_{\nicefrac{1}{2}} \|_{L^2(\mcal[F][h,j]^I)}. \label{eq:HereAfterCS}
\end{align}
The second term on the right-hand side of~\eqref{eq:HereAfterCS} can be bounded by invoking Lemma \ref{lem:inversepoly} as follows:
\begin{align}
\| \sigma_{h,1}^{-\nicefrac{1}{2}} \average{ \mcal[R][1](\mcal[J][j](v_{\hfine})) }_{\nicefrac{1}{2}} \|_{L^2(\mcal[F][h,j]^I)}^2 & \le \sum_{\elem \in \mcal[T][h,j]} \| \sigma_{h,1}^{-\nicefrac{1}{2}} \mcal[R][1](\mcal[J][j](v_{\hfine})) \|_{L^2(\partial \elem)}^2 \\
& = C_{\sigma}^{-1}  \sum_{\elem \in \mcal[T][h,j]}\frac{\harmonic{h_{\elem}}}{ p^2} \|  \mcal[R][1](\mcal[J][j](v_{\hfine})) \|_{L^2(\partial \elem)}^2 \\
& \lesssim \sum_{\elem \in \mcal[T][h,j]} \| \mcal[R][1](\mcal[J][j](v_{\hfine})) \|_{L^2(\elem)}^2,
\label{eq:HereAfterTrace}
\end{align}
where we have used that $\harmonic{h_{\elem}} \le 2 h_{\elem}$ for any $\elem \in \mcal[T][\hfine]$. By inserting~\eqref{eq:HereAfterTrace} into~\eqref{eq:HereAfterCS} we obtain
\begin{equation}\label{eq:HereRh1leSigmah1JumpVh}
\| \mcal[R][1](\mcal[J][j](v_{\hfine})) \|_{L^2(\mcal[D][j])} \lesssim \| \sigma_{h,1}^{\nicefrac{1}{2}} \jump{v_h} \|_{L^2(\mcal[F][h,j]^I)}.
\end{equation}
Hence, by selecting $z_h = \Pi_h z$ in~\eqref{eq:ThisEquationInThisProof} and employing~\eqref{eq:HereRh1leSigmah1JumpVh},~\Cref{lem:interpDG_loc}, cf., also, \Cref{rmrk:GloApp},  and Theorem~\ref{thm_elliptic_regularity}, we obtain
\begin{align}
~~~\| v_h - \overline{v}_{h,j} \|_{L^2(\mcal[D][j])}^2 &
\lesssim \nicefrac{h}{p} \,\| \sigma_{h,1}^{\nicefrac{1}{2}} \jump{v_h} \|_{L^2(\mcal[F][h,j]^I)}\ \| z \|_{H^2(\mcal[D][j])} \\
& \lesssim \nicefrac{h}{p} \, \| \sigma_{h,1}^{\nicefrac{1}{2}} \jump{v_h} \|_{L^2(\mcal[F][h,j]^I)} \ (\| v_h - \overline{v}_{h,j} \|_{L^2(\mcal[D][j])}\ +\ \| \nabla z \|_{L^2(\mcal[D][j])}).
\label{eq:BoundForTheForcingTermOfDual}
\end{align}
Here, we note that $z$ also solves
\begin{equation}
\int_{\mcal[D][j]} \nabla z \cdot \nabla w\ \diff \xvec = \int_{\mcal[D][j]} (v_{\hfine} - \overline{v}_{h,j}) w\ \diff \xvec \quad \forall\, w \in \widetilde{V}_j,
\end{equation}
from which, by choosing $w = z$, upon application of the Cauchy--Schwarz inequality and Poincar\'{e}'s inequality, cf. Remark~\ref{rmrk:Poincare}, we get that $\| \nabla z \|_{L^2(\mcal[D][j])} \lesssim \| v_{\hfine} - \overline{v}_{h,j} \|_{L^2(\mcal[D][j])}$. By inserting this bound into~\eqref{eq:BoundForTheForcingTermOfDual} we obtain~\eqref{eq:HboundL2}. In order to show~\eqref{eq:HboundH1} we first select $w = \widetilde{v}_{h,j} \in \widetilde{V}_j$ in~\eqref{eq:HdefInChapter4InChapter4}; then, using the Cauchy--Schwarz inequality we obtain:
\begin{equation}
| \widetilde{v}_{h,j} |_{H^1(\mcal[D][j])} \lesssim \| \mcal[G][h,j](v_h) \|_{L^2(\mcal[D][j])}.
\end{equation}
Then, from the definition of $\mcal[G][h,j]$ given in~\Cref{def:H_of_v_h} we have:
\begin{align}\label{eq:ThisEquationInThisProof_2}
\| \mcal[G][h,j](v_h) \|_{L^2(\mcal[D][j])}^2 ~\lesssim ~\| \nabla_h v_h \|_{L^2(\mcal[T][h,j])}^2 + \| \mcal[R][1](\mcal[J][j](v_{\hfine})) \|_{L^2(\mcal[D][j])}^2.
\end{align}
The bound~\eqref{eq:HboundH1} is then obtained by inserting~\eqref{eq:HereRh1leSigmah1JumpVh} into~\eqref{eq:ThisEquationInThisProof_2}.
\end{proof}

We are now ready to investigate the relationship between the spaces $V_{\hfine},\ V_{\hcoarse}$, and $V_{\hlocal}$ introduced above. The following result concerns the approximation of a function $v_h \in V_{\hfine}$ with a coarse function $v_{\hcoarse} \in V_{\hcoarse}$; this represents an extension of the analogous result presented in~\cite[Lemma 5.1]{AnHoSm2016}.

\begin{lemma}\label{lem:vh_R0TvH}
For any $v_h \in V_h$ there exists a coarse function $v_{\hcoarse} \in V_{\hcoarse}$ such that
\begin{align}
& \| v_h - R_0^\top v_{\hcoarse} \|_{L^2(\mcal[D][j])} \lesssim \nicefrac{\hcoarse}{q}\ ( \| \nabla_h v_h \|_{L^2(\mcal[T][h,j])}^2 + \| \sigma_{h,1}^{\nicefrac{1}{2}}\jump{v_h} \|_{L^2(\mcal[F][h,j]^I)}^2)^{\nicefrac{1}{2}}, \label{eq:L2}\\
& | v_h - R_0^\top v_{\hcoarse} |_{H^1(\mcal[T][h,j])} \lesssim ( \| \nabla_h v_h \|_{L^2(\mcal[T][h,j])}^2 + \| \sigma_{h,1}^{\nicefrac{1}{2}}\jump{v_h} \|_{L^2(\mcal[F][h,j]^I)}^2)^{\nicefrac{1}{2}}, \label{eq:H1}
\end{align}
for $j=1,\dots,N_{\hcoarse}$, where $\mcal[T][h,j]$ and $\mcal[F][h,j]^I$ are as defined in~\eqref{eq:Thj}.
\end{lemma}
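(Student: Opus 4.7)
The plan is to build $v_\hcoarse$ piecewise on each coarse element $\mcal[D][j]$ by first smoothing $v_h$ into an $H^1$ function via the conforming approximant of \Cref{th:Hbound}, and then projecting this smooth function onto the polynomial space of degree $q$ using the coarse-mesh analogue of \Cref{lem:interpDG_loc}/\Cref{rmrk:GloApp}. In the nested-grid setting that is in force throughout \Cref{sec:PreResAS}, we have $V_\hcoarse \subseteq V_\hfine$ (since $q \le p$ and each $\mcal[D][j]$ is a union of fine elements), so \Cref{rmrk:R_0T_Nest} identifies $R_0^\top v_\hcoarse$ with $v_\hcoarse$, and it is enough to bound $v_h - v_\hcoarse$ on each $\mcal[D][j]$ separately.

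Concretely, let $\widetilde{v}_{h,j} \in \widetilde{V}_j$ be the function from \Cref{def:H_of_v_h} and set $c_j := |\mcal[D][j]|^{-1}\int_{\mcal[D][j]} v_h\,\diff \xvec$, so that $\overline{v}_{h,j} = \widetilde{v}_{h,j} + c_j$ is the conforming approximant of \Cref{th:Hbound}. Since \Cref{ass:Dj1} makes $\mcal[D][j]$ convex and Lipschitz, its boundary is minimally smooth and \Cref{thm-extension} supplies a Stein extension on $\mcal[D][j]$ whose constant does not depend on $\hcoarse$; this in turn allows the coarse-mesh analogue of \Cref{lem:interpDG_loc} to furnish an approximation operator $\Pi_{\hcoarse,j} : L^2(\mcal[D][j]) \to \mcal[P][q](\mcal[D][j])$ satisfying, for every $w \in H^1(\mcal[D][j])$,
\[
\|w - \Pi_{\hcoarse,j}w\|_{L^2(\mcal[D][j])} \lesssim \tfrac{\hcoarse}{q}\|w\|_{H^1(\mcal[D][j])}, \qquad |w - \Pi_{\hcoarse,j}w|_{H^1(\mcal[D][j])} \lesssim \|w\|_{H^1(\mcal[D][j])}.
\]
I then define $v_\hcoarse|_{\mcal[D][j]} := c_j + \Pi_{\hcoarse,j}\widetilde{v}_{h,j} \in \mcal[P][q](\mcal[D][j])$, which manifestly belongs to $V_\hcoarse$.

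For the $L^2$ bound I apply the triangle inequality:
\[
\|v_h - v_\hcoarse\|_{L^2(\mcal[D][j])} \le \|v_h - \overline{v}_{h,j}\|_{L^2(\mcal[D][j])} + \|\widetilde{v}_{h,j} - \Pi_{\hcoarse,j}\widetilde{v}_{h,j}\|_{L^2(\mcal[D][j])}.
\]
\Cref{th:Hbound} controls the first term by $\tfrac{h}{p}\|\sigma_{h,1}^{\nicefrac{1}{2}}\jump{v_h}\|_{L^2(\mcal[F][h,j]^I)}$, which is majorized by $\tfrac{\hcoarse}{q}\|\sigma_{h,1}^{\nicefrac{1}{2}}\jump{v_h}\|_{L^2(\mcal[F][h,j]^I)}$ since $h\le \hcoarse$ and $q \le p$. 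The $L^2$-approximation bound above, combined with Poincar\'e's inequality (\Cref{rmrk:Poincare}) and \eqref{eq:HboundH1}, controls the second term by the required right-hand side of \eqref{eq:L2}. The estimate \eqref{eq:H1} follows the same template: since $v_\hcoarse|_{\mcal[D][j]}$ is smooth, $|v_h - v_\hcoarse|_{H^1(\mcal[T][h,j])} \lesssim \|\nabla_h v_h\|_{L^2(\mcal[T][h,j])} + |\widetilde{v}_{h,j}|_{H^1(\mcal[D][j])} + |\widetilde{v}_{h,j} - \Pi_{\hcoarse,j}\widetilde{v}_{h,j}|_{H^1(\mcal[D][j])}$, and the last summand is bounded by $\|\widetilde{v}_{h,j}\|_{H^1(\mcal[D][j])} \lesssim |\widetilde{v}_{h,j}|_{H^1(\mcal[D][j])}$ via Poincar\'e; \eqref{eq:HboundH1} then finishes the estimate.

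The main obstacle is the transfer of \Cref{lem:interpDG_loc}/\Cref{rmrk:GloApp} from the fine mesh to a single coarse element $\mcal[D][j]$, with a constant uniform in $\hcoarse$, $q$ and $\rho$. The convexity-plus-Lipschitz hypothesis of \Cref{ass:Dj1} is precisely what permits the deployment of the extension operator of \Cref{thm-extension} with a measure-independent constant; once this is in place, the remaining argument is routine bookkeeping with the triangle inequality and Poincar\'e.
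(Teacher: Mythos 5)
Your proposal is correct and reproduces the paper's argument essentially verbatim: the same choice of $v_\hcoarse|_{\mcal[D][j]} = |\mcal[D][j]|^{-1}\int_{\mcal[D][j]} v_h\,\diff\xvec + \Pi_\hcoarse(\widetilde{v}_{h,j})$, the same use of \Cref{rmrk:R_0T_Nest} to replace $R_0^\top v_\hcoarse$ by $v_\hcoarse$, the same triangle-inequality split against $\overline{v}_{h,j}$, and the same combination of \Cref{th:Hbound}, the coarse-mesh version of \Cref{lem:interpDG_loc}, and the Poincar\'e inequality from \Cref{rmrk:Poincare}, together with $h\le\hcoarse$, $q\le p$. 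The only difference is that you spell out more explicitly the role of the Stein extension of \Cref{thm-extension} in making the coarse-element interpolation constant uniform in $\hcoarse$, which the paper invokes implicitly via \Cref{rmrk:GloApp}.
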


\begin{proof}
Let $v_h \in V_h$ and define $v_{\hcoarse}$ by $v_{\hcoarse}|_{\mcal[D][j]} := (\nicefrac{1}{|\mcal[D][j]|} \int_{\mcal[D][j]} v_h \diff \xvec) + (\Pi_{\hcoarse} (\widetilde{v}_{h,j}))|_{\mcal[D][j]}$, $j=1,\dots,N_{\hcoarse}$, where $\widetilde{v}_{h,j}$ is as defined in~\Cref{def:H_of_v_h} and $\Pi_{\hcoarse}$ denotes the global variant of the $hp$-approximant introduced in~\Cref{lem:interpDG_loc}, cf. also \Cref{rmrk:GloApp}, defined on the coarse space $V_{\hcoarse}$. Then, by noting~\Cref{rmrk:R_0T_Nest}, the application of the triangle inequality gives
\begin{align}
\| v_h - R_0^\top v_{\hcoarse}\|_{L^2(\mcal[D][j])} &=   \| v_h - v_{\hcoarse}\|_{L^2(\mcal[D][j])} \\
  & \lesssim \| v_h - \overline{v}_{h,j}\|_{L^2(\mcal[D][j])} \!+ \| \overline{v}_{h,j} - v_{\hcoarse}\|_{L^2(\mcal[D][j])} \!\\
& \lesssim \| v_h - \overline{v}_{h,j}\|_{L^2(\mcal[D][j])} \!+ \| \widetilde{v}_{h,j} - \Pi_{\hcoarse}(\widetilde{v}_{h,j})\|_{L^2(\mcal[D][j])},\!
\end{align}
with $\overline{v}_{h,j}$ as defined in~\Cref{th:Hbound}.
Employing~\Cref{lem:interpDG_loc} together with~\Cref{ass2}, cf.~\Cref{rmrk:GloApp}, gives
\begin{equation}
\| v_h -  R_0^\top v_{\hcoarse}\|_{L^2(\mcal[D][j])} \lesssim \| v_h - \overline{v}_{h,j}\|_{L^2(\mcal[D][j])}+ \nicefrac{\hcoarse}{q} \, \| \widetilde{v}_{h,j}\|_{H^1(\mcal[D][j])}.
\end{equation}
By applying Poincar\'e's inequality to $\widetilde{v}_{h,j} \in \widetilde{V}_j$, see also~\Cref{rmrk:Poincare}, and noting the bounds given in~\Cref{th:Hbound}, we immediately deduce inequality~\eqref{eq:L2} by observing that $\hfine \le \hcoarse$ and $q \le p$. In order to obtain~\eqref{eq:H1} we proceed as follows:
\begin{align}
| v_h - R_0^\top v_{\hcoarse} |_{H^1(\mcal[T][h,j])} & \lesssim |v_h|_{H^1(\mcal[T][h,j])} + |R_0^\top v_{\hcoarse}|_{H^1(\mcal[T][h,j])}\\
& =  |v_h|_{H^1(\mcal[T][h,j])} + |v_{\hcoarse}|_{H^1(\mcal[T][h,j])}.\label{eq:vh_vH_H1}
\end{align}
Thanks to the triangle inequality and observing that $v_{\hcoarse}|_{\mcal[D][j]} \in \mcal[P][q](\mcal[D][j]) \subset H^1(\mcal[D][j])$ we have that
\begin{align}\label{eq:vHH1lesssimTildevhjH1} 
\begin{aligned}
| v_{\hcoarse} |_{H^1(\mcal[T][\hfine,j])} & \lesssim | \Pi_{\hcoarse}(\widetilde{v}_{h,j}) - \widetilde{v}_{h,j} |_{H^1(\mcal[D][j])} + | \widetilde{v}_{h,j} |_{H^1(\mcal[D][j])} \\
& \lesssim | \Pi_{\hcoarse}(\widetilde{v}_{h,j}) - \widetilde{v}_{h,j} |_{H^1(\mcal[T][\hcoarse])} + | \widetilde{v}_{h,j} |_{H^1(\Om)}  \lesssim | \widetilde{v}_{h,j} |_{H^1(\Om)},
\end{aligned}
\end{align}
where we have used the bound stated in~\Cref{rmrk:GloApp} and Poincar\'e's inequality, cf. also~\Cref{rmrk:Poincare}. Inserting~\eqref{eq:vHH1lesssimTildevhjH1} into~\eqref{eq:vh_vH_H1} and noting~\Cref{th:Hbound} gives~\eqref{eq:H1}.
\end{proof}

\begin{remark}\label{lem:vh_R0TvH_Global}
By summing over all $\mcal[D][j] \in \mcal[T][\hcoarse]$, $j=1,\dots,N_{\hcoarse}$, the local bounds of~\Cref{lem:vh_R0TvH} give rise to the following global estimates:
\begin{align}
 \| v_h - R_0^\top v_{\hcoarse} \|_{L^2(\Om)} \lesssim \nicefrac{\hcoarse}{q} \, \normDG[v_h][h,1],
 \qquad | v_h - R_0^\top v_{\hcoarse} |_{H^1(\mcal[T][h])} \lesssim \normDG[v_h][h,1], \label{eq:GloH1}
\end{align}
which are in agreement with the analogous results developed in~\cite{AnHoSm2016}.
\end{remark}

Before proceeding with the analysis of $P_{ad}$ we also need the following result regarding the properties of the subdomain decomposition introduced in~\Cref{sec:ASPCG}.

\begin{lemma}\label{lem:decVi}
Given $v_h \in V_h$, there exists a unique decomposition, i.e., $v_h = \sum_{i=1}^{N_{\hlocal}} R_i^\top v_i$, with $v_i \in V_i\ i=1,\dots,N_\hlocal$, such that
\begin{equation}\label{eq:AhvvEqSumAiviviSumRiviRivi}
\mcal[A][h](v_h,v_h) = \sum_{i=1}^{N_\hlocal} \mcal[A][i](v_i,v_i) + \sum_{i,j=1, i\ne j}^{N_\hlocal} \mcal[A][h](R_i^\top v_i, R_j^\top v_j),
\end{equation}
and
\begin{align}
\Bigl| \sum_{i,j=1, i\ne j}^{N_\hlocal} \mcal[A][h](R_i^\top v_i, R_j^\top v_j) \Bigr| & \lesssim \normL[\sqrt{\rho}\ \nabla_h v_h][2][\mcal[T][h]]^2 + \sum_{i=1}^{N_\hlocal} \| \sigma_{h,\rho}^{\nicefrac{1}{2}} v_h \|_{L^2(\partial \Om_i)}^2.
\end{align}
\end{lemma}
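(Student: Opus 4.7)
The plan is to take $v_i := v_h|_{\Om_i}$ for $i=1,\ldots,N_\hlocal$. Because $\mcal[T][\hlocal]$ partitions $\Om$ into disjoint subdomains, each a union of fine elements, and $V_i = V_{\hfine}|_{\Om_i}$, this decomposition is unique and satisfies $v_h = \sum_{i=1}^{N_\hlocal} R_i^\top v_i$. The identity~\eqref{eq:AhvvEqSumAiviviSumRiviRivi} is then immediate from the bilinearity of $\mcal[A][h]$ together with the defining relation $\mcal[A][i](v_i,v_i) = \mcal[A][h](R_i^\top v_i, R_i^\top v_i)$.

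For each $i\neq j$, I would split $\mcal[A][h](R_i^\top v_i, R_j^\top v_j)$ into the four pieces corresponding to the summands in~\eqref{eq:Ah}. The volume gradient piece vanishes since $\nabla_h(R_i^\top v_i)$ and $\nabla_h(R_j^\top v_j)$ have disjoint supports. The penalty piece is supported on $\partial\Om_i\cap\partial\Om_j$: on such a face one computes $\jump{R_i^\top v_i}\cdot\jump{R_j^\top v_j} = -v_iv_j$ (using $\mathbf{n}_i = -\mathbf{n}_j$), so face-wise Cauchy--Schwarz together with Young's inequality bounds this contribution by $\tfrac{1}{2}(\|\sigma_{h,\rho}^{\nicefrac{1}{2}} v_h\|_{L^2(\partial\Om_i)}^2 + \|\sigma_{h,\rho}^{\nicefrac{1}{2}} v_h\|_{L^2(\partial\Om_j)}^2)$. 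Using the support structure of the lifting operator $\mcal[R][\rho]$ together with the fact that $R_j^\top v_j$ is supported on $\Om_j$, the two lifting pieces are in turn supported on the strip of fine elements of one subdomain immediately adjacent to the interface with the other. In particular, every cross term vanishes unless $\Om_i$ and $\Om_j$ are neighbours, so by the colouring property~\eqref{eq:Ns} there are at most $O(N_{\mathbb{S}})$ nonzero cross terms per index $i$.

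The main technical step is a \emph{localized} estimate on the lifting piece $\int_{\Om} \rho\,\nabla_h(R_i^\top v_i)\cdot \mcal[R][\rho](\jump{R_j^\top v_j})\,\diff\xvec$. A naive use of a global bound on $\mcal[R][\rho]$ would introduce the interior jump norm of $v_h$ over $\Om_j$, which is absent from the claimed right-hand side. Instead, I would test the defining identity~\eqref{eq:lifting_R_2} of $\mcal[R][\rho]$ with $\boldsymbol{\eta} := \rho\,\mcal[R][\rho](\jump{R_j^\top v_j})$ extended by zero outside the strip $\Sigma_{ij}\subset \Om_i$ of fine elements adjacent to $\partial\Om_j$. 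Since $\average{\boldsymbol{\eta}}_\omega$ is then nonzero only on faces of $\Sigma_{ij}$, whereas $\jump{R_j^\top v_j}$ is supported on faces interior to $\Om_j$ and on $\partial\Om_j$, only the shared interface faces $F\subset\partial\Om_i\cap\partial\Om_j$ contribute to the right-hand side of~\eqref{eq:lifting_R_2}. Face-wise Cauchy--Schwarz, the inverse trace inequality~\Cref{lem:inversepoly} applied to $\boldsymbol{\eta}$, and the scaling of $\sigma_{h,\rho}$ from~\eqref{eq:Sigma} then yield
\begin{equation*}
\|\sqrt{\rho}\,\mcal[R][\rho](\jump{R_j^\top v_j})\|_{L^2(\Sigma_{ij})} \;\lesssim\; \|\sigma_{h,\rho}^{\nicefrac{1}{2}} v_h\|_{L^2(\partial\Om_i\cap\partial\Om_j)}.
\end{equation*}
A final Cauchy--Schwarz on the lifting piece, Young's inequality, summation over the $O(N_{\mathbb{S}})$ neighbour pairs per index (and the analogous treatment of the symmetric lifting piece) then combine with the penalty estimate to deliver the stated bound. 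The hardest step is precisely this localization: without it the global lifting bound contaminates the estimate with interior jump contributions that are not present in the target right-hand side.
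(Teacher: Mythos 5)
Your proposal is correct and takes essentially the same approach as the paper: the same decomposition $v_i = R_i v_h$, the same observations that the volume gradient cross term vanishes by disjoint support and that the penalty cross term localizes to $\partial\Om_i\cap\partial\Om_j$, and the same combination of the defining identity \eqref{eq:lifting_R_2}, support localization, \Cref{lem:inversepoly}, Young's inequality, and the colouring bound \eqref{eq:Ns}. The only variation is in the lifting cross term: the paper tests \eqref{eq:lifting_R_2} with $\boldsymbol{\eta}=\rho\nabla_h(R_i^\top v_i)$, rewrites the term as a face integral over $\partial\Om_i\cap\partial\Om_j$, and applies the inverse trace inequality to $\nabla_h(R_i^\top v_i)$; you instead keep the volume integral, use Cauchy--Schwarz over the strip $\Sigma_{ij}$, and obtain the localized bound $\|\sqrt{\rho}\,\mcal[R][\rho](\jump{R_j^\top v_j})\|_{L^2(\Sigma_{ij})}\lesssim\|\sigma_{h,\rho}^{1/2}\jump{R_j^\top v_j}\|_{L^2(\partial\Om_i\cap\partial\Om_j)}$ by testing with $\boldsymbol{\eta}=\rho\,\mcal[R][\rho](\jump{R_j^\top v_j})\mathbbm{1}_{\Sigma_{ij}}$ and applying \Cref{lem:inversepoly} to the lifting itself. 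These are equivalent reshufflings of the same estimate, and both are valid.
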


\begin{proof}
Given $v_h \in V_h$ set $v_i := R_i v_h,\ i=1,\dots,N_\hlocal$; then,
$\mcal[A][h](R_i^\top v_i, R_j^\top v_j) = 0$ if $\partial \Om_i \cap \partial \Om_j = \emptyset$.
For $i,j=1,\dots,N_\hlocal$, we have
\begin{equation}
\Bigl|  \sum_{i,j=1, i\ne j}^{N_\hlocal} \mcal[A][i](R_i^{\top} v_i,R_j^{\top} v_j) \Bigr|  \lesssim  \sum_{i,j=1, i\ne j}^{N_\hlocal} | \mcal[A][i](R_i^{\top} v_i, R_j^{\top} v_j) | \quad \forall\, v_i \in V_i, v_j \in V_j.
\end{equation}
Now let $i \ne j$ be such that $\partial \Om_i \cap \partial \Om_j \ne \emptyset$ and write $\check{v}_i = R_i^\top v_i$ and $\check{v}_j = R_j^\top v_j$; then,
\begin{equation}\label{eq:Atildevi}
\Aa[h][\check{v}_i][\check{v}_j] = \int_{\Om} \Bigl[\rho \nabla \check{v}_i \cdot \mathcal{R}_{\rho}(\jump{\check{v}_j} ) + \rho \nabla \check{v}_j \cdot \mathcal{R}_{\rho}(\jump{\check{v}_i}) \Bigr]\diff \xvec + \int_{\mcal[F][h]}\sigma_{h,\rho}\jump{\check{v}_i}\cdot\jump{\check{v}_j}\diff s.
\end{equation}
By recalling the definition of $\mcal[R][\rho]$ given in~\eqref{eq:lifting_R_2}, the first term on the right-hand side of~\eqref{eq:Atildevi} can be written as
\begin{align}
\int_{\Om}  \rho  \nabla_h \check{v}_i \cdot \mathcal{R}_{\rho}(\jump{\check{v}_j} ) \diff \xvec & =  - \int_{\mcal[F][h]} \average{\rho \nabla_h \check{v}_i}_{\omega} \cdot \jump{\check{v}_j} \diff s,
\end{align}
since $\rho$ is piecewise constant and $\check{v}_j \in R_i^{\top} V_i \subseteq V_{\hfine}$. By observing that $\jump{\check{v}_j}|_F = \mathbf{0}$ for all $F\in \mcal[F][h]$ such that $F \cap \overline{\Om}_j = \emptyset$, and that $\average{\rho \nabla_h \check{v}_i}_{\omega}|_F = \mathbf{0}$ when $F \cap \overline{\Om}_i = \emptyset$, we have
\begin{align}
\int_{\Om} \rho  \nabla_h \check{v}_i  \cdot \mathcal{R}_{\rho}(\jump{\check{v}_j} ) \diff \xvec  & =  - \int_{\partial \Om_i \cap \partial \Om_j} \jump{\check{v}_j} \cdot \average{\rho \nabla_h \check{v}_i}_{\omega} \diff s \\
& \le \| \sigma_{h,\rho}^{\nicefrac{1}{2}} \jump{\check{v}_j} \|_{L^2(\partial \Om_i \cap \partial \Om_j)}^2\ +\ \| \sigma_{h,\rho}^{-\nicefrac{1}{2}} \average{\rho  \nabla_h \check{v}_i}_{\omega} \|_{L^2(\partial \Om_i \cap \partial \Om_j)}^2 \\
& \le \| \sigma_{h,\rho}^{\nicefrac{1}{2}} \jump{\check{v}_j} \|_{L^2(\partial \Om_j)}^2\ +\ \| \sigma_{h,\rho}^{-\nicefrac{1}{2}} \average{\rho  \nabla_h \check{v}_i}_{\omega} \|_{L^2(\partial \Om_i)}^2. \label{eq:AfterCS}
\end{align}
Here, the second term on the right-hand side of~\eqref{eq:AfterCS} can be bounded by applying~\Cref{lem:inversepoly} and noting that $\harmonic{\rho_{\elem}} \le 2 \rho_{\elem}$ and $\harmonic{h_{\elem}} \le 2 h_{\elem}$, as follows:
\begin{align} \label{eq:AfterTrace}
\begin{aligned}
\| \sigma_{h,\rho}^{-\nicefrac{1}{2}} \average{\rho  \nabla_h \check{v}_i}_{\omega} \|_{L^2(\partial \Om_i)}^2
& \le \sum_{\elem \subset \Om_i} \| \sigma_{h,\rho}^{-\nicefrac{1}{2}} \harmonic{\rho_{\elem}} \nabla_h \check{v}_i \|_{L^2(\partial \elem)}^2 \\
& = C_{\sigma}^{-1}  \sum_{\elem \subset \Om_i} \frac{\harmonic{h_{\elem}}}{\harmonic{\rho_{\elem}} p^2} \harmonic{\rho_{\elem}}^2 \|  \nabla_h \check{v}_i \|_{L^2(\partial \elem)}^2 \\
& \lesssim \sum_{\elem \subset \Om_i} \| \sqrt{\rho_{\elem}} \nabla_h \check{v}_i) \|_{L^2(\elem)}^2
 = \| \sqrt{\rho} \nabla_h \check{v}_i \|_{L^2(\Om_i)}^2.
\end{aligned}
\end{align}
Inserting~\eqref{eq:AfterTrace} into~\eqref{eq:AfterCS} gives
\begin{equation}\label{eq:BoundForTerm1}
\int_{\Om} \rho \nabla \check{v}_i \cdot \mathcal{R}_{\rho}(\jump{\check{v}_j} ) \diff \xvec \lesssim \| \sqrt{\rho}\ \nabla_h \check{v}_i \|_{L^2(\Om_i)}^2 + \| \sigma_{h,\rho}^{\nicefrac{1}{2}} \jump{\check{v}_j}\|_{L^2(\partial \Om_j)}^2.
\end{equation}
Similarly, we have that
\begin{align}\label{eq:term2}
\int_{\Om} \rho \nabla \check{v}_j \cdot \mathcal{R}_{\rho}(\jump{\check{v}_i} ) \diff \xvec \lesssim \| \sqrt{\rho}\ \nabla_h \check{v}_j \|_{L^2(\Om_j)}^2 +  \| \sigma_{h,\rho}^{\nicefrac{1}{2}} \jump{\check{v}_i}\|_{L^2(\partial \Om_i)}^2
\end{align}
and
\begin{equation}\label{eq:term3}
\int_{\mcal[F][h]} \sigma_{h,\rho}\jump{\check{v}_i}\cdot\jump{\check{v}_j} \diff s \lesssim \| \sigma_{h,\rho}^{\nicefrac{1}{2}} \jump{\check{v}_i}\|_{L^2(\partial \Om_i)}^2 + \| \sigma_{h,\rho}^{\nicefrac{1}{2}} \jump{\check{v}_j}\|_{L^2(\partial \Om_j)}^2.
\end{equation}
Substituting~\eqref{eq:BoundForTerm1},~\eqref{eq:term2} and~\eqref{eq:term3} into~\eqref{eq:Atildevi} we obtain
\begin{align}
\Aa[h][\check{v}_i][\check{v}_j] & \lesssim \| \sqrt{\rho}\ \nabla_h \check{v}_i \|_{L^2(\Om_i)}^2 + \|\sqrt{\rho}\ \nabla_h \check{v}_j \|_{L^2(\Om_j)}^2 \\
& \quad + \| \sigma_{h,\rho}^{\nicefrac{1}{2}} \jump{\check{v}_i}\|_{L^2(\partial \Om_i)}^2 + \| \sigma_{h,\rho}^{\nicefrac{1}{2}} \jump{\check{v}_j}\|_{L^2(\partial \Om_j)}^2.
\end{align}
The result follows by summing over $i,j=1,\dots,N_\hlocal,\ i \ne j$, and exploiting~\eqref{eq:Ns}.
\end{proof}

For the forthcoming analysis we also require an extension of the trace-inverse inequality introduced by Feng and Karakashian in~\cite{FeKa2001}; cf. also Smears \cite[Lemma 5]{Sm2018}, to which we refer for the proof.
\begin{lemma}[Trace inverse inequality]\label{lem:Trace}
Let $\mcal[T][h]$ and $\mcal[T][\hlocal]$ be a pair of nested polytopic grids. We assume that $\mcal[T][\hlocal]$ is obtained by agglomeration of elements of $\mcal[T][\hfine]$ and that both $\mcal[T][\hfine]$ and $\mcal[T][\hlocal]$ satisfy~\Cref{ass1}. Moreover, we assume that for each $\Om_i \in \mcal[T][\hlocal],\ i=1,\dots,N_\hlocal,$ there exists an $\mathbf{x}_{0,i} \in \Om_i$ such that $(\mathbf{x} - \mathbf{x}_{0,i}) \cdot \mathbf{n}_i \gtrsim \hlocal$ for all $\mathbf{x} \in \partial \Om_i$, where $\mathbf{n}_i$ is the unit outward normal vector to $\partial \Om_i$. Then, for any $v_h \in V_h$, writing $\mcal[F][h](\Om_i) := \{ F \in \mcal[F][h]\text{ such that }F \subset \Om_i, F \not\subset \partial \Om_i \}$, the following bound holds:
\begin{align}
\| v_h \|_{L^2(\partial \Om_i)}^2
& \lesssim \| \nabla_h v_h \|_{L^2(\Om_i)} \| v_h \|_{L^2(\Om_i)}
+ \nicefrac{1}{\hlocal} \| v_h \|_{L^2(\Om_i)}^2 \\
& \quad + \| \sigma_{h,1}^{\nicefrac{1}{2}} \jump{v_h} \|_{L^2(\mcal[F][h](\Om_i))} \| v_h \|_{L^2(\Om_i)}.
\end{align}
\end{lemma}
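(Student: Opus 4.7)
The plan is to mimic the classical Feng--Karakashian argument in \cite{FeKa2001}, suitably adapted to the polytopic setting of $\mcal[T][\hfine]$ and $\mcal[T][\hlocal]$, with the field $\mathbf{x}-\mathbf{x}_{0,i}$ playing the role that makes the star-shaped assumption bite. Specifically, because $(\mathbf{x}-\mathbf{x}_{0,i})\cdot\mathbf{n}_i\gtrsim \hlocal$ on $\partial\Om_i$, one has
\begin{equation*}
\hlocal\,\|v_h\|_{L^2(\partial\Om_i)}^2 \lesssim \int_{\partial\Om_i} v_h^2\,(\mathbf{x}-\mathbf{x}_{0,i})\cdot\mathbf{n}_i\,\diff s,
\end{equation*}
so the task reduces to controlling this last integral in terms of volume and jump quantities on $\Om_i$.

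First I would apply the divergence theorem element by element to the vector field $v_h^2(\mathbf{x}-\mathbf{x}_{0,i})$ over each $\elem\in\mcal[T][\hfine]$ with $\elem\subset\Om_i$. Summing, the contributions on faces lying in $\partial\Om_i$ reassemble into the target surface integral, the volume contribution produces $\int_{\Om_i}\bigl(d\,v_h^2 + 2 v_h(\mathbf{x}-\mathbf{x}_{0,i})\cdot\nabla_h v_h\bigr)\diff\xvec$, and faces in $\mcal[F][h](\Om_i)$ contribute jump terms. Using the identity $(v_h^+)^2\mathbf{n}^+ + (v_h^-)^2\mathbf{n}^-=(v_h^++v_h^-)\,\jump{v_h}$ on each interior face $F$, together with $|\mathbf{x}-\mathbf{x}_{0,i}|\lesssim \hlocal$, the volume part is bounded by Cauchy--Schwarz as
$\|v_h\|_{L^2(\Om_i)}^2 + \hlocal\,\|v_h\|_{L^2(\Om_i)}\|\nabla_h v_h\|_{L^2(\Om_i)}$, while the jump contribution is controlled by
\begin{equation*}
\hlocal\,\bigl\|\sigma_{h,1}^{-\nicefrac{1}{2}}(v_h^++v_h^-)\bigr\|_{L^2(\mcal[F][h](\Om_i))}\,\bigl\|\sigma_{h,1}^{\nicefrac{1}{2}}\jump{v_h}\bigr\|_{L^2(\mcal[F][h](\Om_i))}.
\end{equation*}

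The step I expect to be most delicate is the reduction of the $\sigma_{h,1}^{-\nicefrac{1}{2}}$-weighted face norm of $v_h^++v_h^-$ to the full $L^2(\Om_i)$ norm of $v_h$, because this is what makes the final bound independent of $p$ and of the fine mesh. Here I would use that $\sigma_{h,1}|_F\eqsim p^2/\harmonic{h_\elem}$, split the face sum into contributions from each adjacent element, and invoke the polytopic inverse trace inequality of \Cref{lem:inversepoly}, which for $v_h\in\mcal[P][p](\elem)$ yields $\|v_h\|_{L^2(\partial\elem)}^2\lesssim p^2 h_\elem^{-1}\|v_h\|_{L^2(\elem)}^2$. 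The two opposing factors of $p^2/h_\elem$ then cancel, giving
\begin{equation*}
\bigl\|\sigma_{h,1}^{-\nicefrac{1}{2}}(v_h^++v_h^-)\bigr\|_{L^2(\mcal[F][h](\Om_i))}^2 \lesssim \sum_{\elem\subset\Om_i}\|v_h\|_{L^2(\elem)}^2 = \|v_h\|_{L^2(\Om_i)}^2,
\end{equation*}
where the compatibility of Assumption \ref{ass1} for both $\mcal[T][\hfine]$ and $\mcal[T][\hlocal]$ is used to ensure that faces of $\elem\in\mcal[T][\hfine]$ lying interior to some $\Om_i\in\mcal[T][\hlocal]$ still support the polytopic inverse trace inequality.

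Collecting the estimates yields, after division by $\hlocal$, exactly the claimed bound, with a constant depending only on the shape-regularity constants in Assumptions \ref{ass1}--\ref{ass2}, on $C_\sigma$, and on the star-shaped constant implicit in $(\mathbf{x}-\mathbf{x}_{0,i})\cdot\mathbf{n}_i\gtrsim \hlocal$. No use is made of the particular relationship between $\mcal[T][\hlocal]$ and $\mcal[T][\hcoarse]$, so the estimate is valid in the massively parallel regime $\mcal[T][\hlocal]=\mcal[T][\hfine]$ discussed in \Cref{rmrk:assump_nested_meshes}.
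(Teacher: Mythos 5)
Your argument is correct and is exactly the Feng--Karakashian/Smears strategy: divergence theorem applied to $v_h^2(\mathbf{x}-\mathbf{x}_{0,i})$ over each $\elem\subset\Om_i$, the star-shaped condition giving a lower bound on the boundary term, and the polytopic inverse trace inequality (\Cref{lem:inversepoly}) to cancel the $\sigma_{h,1}^{-\nicefrac{1}{2}}$ weight on the interior-face contribution. The paper does not prove this lemma itself but explicitly defers to \cite{FeKa2001} and \cite[Lemma~5]{Sm2018}, and your proposal reconstructs that cited argument faithfully; the only negligible imprecision is that \Cref{ass1} on $\mcal[T][\hlocal]$ is not actually used in your proof (the star-shaped hypothesis is what does the work at the subdomain level), but this does not affect correctness.
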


\section{Condition number estimates}\label{sec:ConNumEst}
In this section we derive an upper bound on the condition number of $P_{ad}$ by following the analysis presented in~\cite{ToWi2004}; see also~\cite{Li1987}. To this end, we show that the following three assumptions are satisfied.

\begin{assumption}[Local stability]\label{ass:LocSta}
There exists an $\alpha \in (0,2)$ such that
\begin{equation}
\Aa[\hfine][R_i^\top v_i][R_i^\top v_i] \le \alpha \Aa[i][v_i][v_i] \quad \forall\, v_i \in V_i,\ i=0,1,\dots,N_H.
\end{equation}
\end{assumption}
We point out that in our setting~\Cref{ass:LocSta} immediately follows with $\alpha = 1$ from the definition of $\Aa[i]$ given in~\Cref{sec:ASPCG}, cf.~\cite{AnHo2011}.

\begin{assumption}[Strengthened Cauchy--Schwarz inequality]\label{ass:StrCauSchIne}
There exist constants $\epsilon_{ij} \in [0,1]$, for $1 \le i,j \le N_{\hlocal}$, such that
\begin{equation}
| \Aa[h][R_i^\top v_i][R_j^\top v_j] | \le  \epsilon_{ij} \Aa[h][R_i^\top v_i][R_i^\top v_i]^{\nicefrac{1}{2}} \Aa[h][R_j^\top v_j][R_j^\top v_j]^{\nicefrac{1}{2}}
\end{equation}
for all $v_i \in V_i$, $v_j \in V_j$. Define $\Theta(\mathbf{\mcal[E]})$ to be the spectral radius of
$(\mathbf{\mcal[E]})_{ij} =  \{\epsilon_{ij}\}_{i,j=1,\ldots,N_{\hlocal}}$.
\end{assumption}
\Cref{ass:StrCauSchIne} immediately follows since each subdomain $\Om_i \in \mcal[T][\hlocal]$, $i=1,\dots,N_{\hlocal}$, can possess only a finite number of neighbours, cf.~\eqref{eq:Ns}. In particular, by observing that if $\partial \Om_i \cap \partial \Om_j = \emptyset$, then $\Aa[h][R_i^\top v_i][R_j^\top v_j] = 0$ for all $v_i \in V_i$,  $v_j \in V_j$, we deduce that
$\epsilon_{ij} = 0$ if $\partial \Om_i \cap \partial \Om_j = \emptyset$, $\epsilon_{ij}$ $= 1$, otherwise. Then $\Theta(\mathbf{\mcal[E]})$ is uniformly bounded by $(N_{\mathbb{S}}+1)$, where $N_{\mathbb{S}}$ is the maximum number of neighbours that each subdomain may possess, cf.~\eqref{eq:Ns}. This result ensures that a stable (in the sense of the energy norm) decomposition can be found for the local spaces and the coarse one.

\begin{assumption}[Stable decomposition]\label{ass:StaDec}
Each $v_h \in V_h$ admits a decomposition of the form $v_h = \sum_{i=0}^{N_{\hlocal}} R_i^\top v_i$, $v_i \in V_i$, $i=1,\dots,N_{\hlocal}$, and $v_0 \in V_0$, such that
\begin{equation}
\sum_{i=0}^{N_{\hlocal}} \Aa[i][u_i][u_i] \le C_{\sharp}^2 \Aa[h][u_h][u_h].
\end{equation}
\end{assumption}

Following~\cite[Theorem 2.7]{ToWi2004} the upper bound on the condition number of $P_{ad}$ is stated in the following theorem.
\begin{theorem}\label{thm:TosWil}
Supposing that~\Cref{ass:LocSta}--\Cref{ass:StaDec} hold, the condition number $K(P_{ad})$ of the additive Schwarz operator $P_{ad}$ is bounded as follows:
\begin{equation}
K(P_{ad}) \lesssim C_{\sharp}^2 \alpha (\Theta(\mathbf{\mcal[E]}) + 1),
\end{equation}
where $\alpha$, $\mathbf{\mcal[E]}$, and $C_{\sharp}$ are as defined in Assumptions~\ref{ass:LocSta},~\ref{ass:StrCauSchIne} and~\ref{ass:StaDec}, respectively.
\end{theorem}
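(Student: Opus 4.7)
The plan is to treat Theorem~\ref{thm:TosWil} as an essentially abstract Schwarz result: the three assumptions fit exactly into the framework of \cite[Ch.~2]{ToWi2004}, and the proof amounts to separately bounding $\lambda_{\max}(P_{ad})$ and $\lambda_{\min}(P_{ad})$ in the energy inner product induced by $\mcal[A][h]$. The key algebraic facts I will exploit repeatedly are symmetry of $\mcal[A][h]$, $\mcal[A][i]$, $\mcal[A][0]$, and the identity
\begin{equation*}
\mcal[A][h](P_i v_h, v_h) = \mcal[A][i](\widetilde{P}_i v_h, \widetilde{P}_i v_h), \qquad i=0,1,\dots,N_\hlocal,
\end{equation*}
which follows directly from the definitions of $\widetilde{P}_i$ and $P_i = R_i^\top \widetilde{P}_i$ by choosing $w_i = \widetilde{P}_i v_h$ as the test function.

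First I would establish the upper bound $\lambda_{\max}(P_{ad}) \lesssim \alpha(\Theta(\mathbf{\mcal[E]}) + 1)$. For the coarse component I would combine the identity above with Assumption~\ref{ass:LocSta} to obtain $\mcal[A][h](P_0 v_h, P_0 v_h) \le \alpha\,\mcal[A][h](P_0 v_h, v_h)$, and then the Cauchy--Schwarz inequality in $\mcal[A][h]$ yields $\mcal[A][h](P_0 v_h, v_h) \le \alpha\,\mcal[A][h](v_h, v_h)$. For the sum of the local contributions, set $S := \sum_{i=1}^{N_\hlocal} \mcal[A][h](P_i v_h, v_h)$. Using Assumption~\ref{ass:StrCauSchIne} in the form
\begin{equation*}
\mcal[A][h]\Bigl(\sum_{i=1}^{N_\hlocal} P_i v_h, \sum_{i=1}^{N_\hlocal} P_i v_h\Bigr) \le \Theta(\mathbf{\mcal[E]}) \sum_{i=1}^{N_\hlocal} \mcal[A][h](P_i v_h, P_i v_h) \le \alpha\,\Theta(\mathbf{\mcal[E]})\, S,
\end{equation*}
together with Cauchy--Schwarz applied to $S = \mcal[A][h]\bigl(\sum_{i\ge 1} P_i v_h,\, v_h\bigr)$, gives $S \le \alpha\,\Theta(\mathbf{\mcal[E]})\,\mcal[A][h](v_h, v_h)$. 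Adding the two contributions yields $\mcal[A][h](P_{ad} v_h, v_h) \le \alpha(\Theta(\mathbf{\mcal[E]}) + 1)\,\mcal[A][h](v_h, v_h)$.

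Next I would establish the lower bound $\lambda_{\min}(P_{ad}) \ge 1/C_\sharp^2$. Invoking Assumption~\ref{ass:StaDec}, pick a decomposition $v_h = \sum_{i=0}^{N_\hlocal} R_i^\top v_i$ with $\sum_i \mcal[A][i](v_i, v_i) \le C_\sharp^2\,\mcal[A][h](v_h, v_h)$. Then
\begin{equation*}
\mcal[A][h](v_h, v_h) = \sum_{i=0}^{N_\hlocal} \mcal[A][h](R_i^\top v_i, v_h) = \sum_{i=0}^{N_\hlocal} \mcal[A][i](v_i, \widetilde{P}_i v_h),
\end{equation*}
by the definition of $\widetilde{P}_i$. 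Applying the discrete Cauchy--Schwarz inequality to the sum and recognising $\sum_i \mcal[A][i](\widetilde{P}_i v_h, \widetilde{P}_i v_h) = \mcal[A][h](P_{ad} v_h, v_h)$ yields $\mcal[A][h](v_h, v_h) \le C_\sharp\,\mcal[A][h](v_h, v_h)^{1/2}\,\mcal[A][h](P_{ad} v_h, v_h)^{1/2}$, i.e., $\mcal[A][h](v_h, v_h) \le C_\sharp^2\,\mcal[A][h](P_{ad} v_h, v_h)$.

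Combining the two bounds gives $K(P_{ad}) = \lambda_{\max}/\lambda_{\min} \lesssim C_\sharp^2\,\alpha\,(\Theta(\mathbf{\mcal[E]}) + 1)$, as required. There is no real obstacle here: the work of the paper lies in verifying Assumptions~\ref{ass:LocSta}--\ref{ass:StaDec} quantitatively (with $\alpha=1$, $\Theta(\mathbf{\mcal[E]}) \le N_\mathbb{S}+1$, and $C_\sharp$ to be determined from Lemmas~\ref{lem:vh_R0TvH}, \ref{lem:decVi} and \ref{lem:Trace}), whereas the passage from these three assumptions to the condition-number estimate is a verbatim instance of the abstract additive Schwarz theory of Toselli and Widlund.
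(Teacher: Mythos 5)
Your proof is correct, and it follows exactly the standard abstract additive Schwarz argument from Toselli--Widlund, which is precisely what the paper invokes: the paper gives no proof of its own for this theorem, simply citing \cite[Theorem 2.7]{ToWi2004}. Your write-up spells out that citation in full — the self-adjointness identity $\mcal[A][h](P_i v_h, v_h) = \mcal[A][i](\widetilde{P}_i v_h, \widetilde{P}_i v_h)$, the upper bound on $\lambda_{\max}(P_{ad})$ via local stability and strengthened Cauchy--Schwarz, and the lower bound on $\lambda_{\min}(P_{ad})$ via the stable decomposition — and your closing remark correctly locates where the real work of the paper lies, namely in verifying the three assumptions quantitatively.
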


Next we prove that \Cref{ass:StaDec} holds.
\begin{theorem}\label{thm:TheoremValidityAssumption3}
Let $v_h \in V_h$, and assume that the grid $\mcal[T][\hfine]$ satisfies Assumptions~\ref{ass1} and~\ref{ass2}. We also assume that $\mcal[T][\hlocal]$ is obtained by agglomeration of elements of $\mcal[T][\hfine]$, $\mcal[T][\hcoarse]$ is obtained by agglomeration of elements of $\mcal[T][\hlocal]$, and that both $\mcal[T][\hlocal]$ and $\mcal[T][\hcoarse]$ satisfy Assumptions~\ref{ass1} and~\ref{ass2}. Then,~\Cref{ass:StaDec} holds with
\begin{equation}
C_{\sharp}^2 \eqsim \Bigl[ \max_{j=1,\dots,N_{\hcoarse}} \Bigl( \frac{\overline{\rho}_j}{\underline{\rho}_j} \Bigr) \Bigr] \Bigl(\frac{p^{2}}{q} \frac{\hcoarse}{\hfine} + \frac{p^2}{q^2} \frac{\hcoarse^2}{\hfine \hlocal} \Bigr),
\end{equation}
where $\underline{\rho}_j = \min_{\xvec \in \mcal[D][j]} (\rho(\xvec))$ and $\overline{\rho}_j = \max_{\xvec \in \mcal[D][j]} (\rho(\xvec))$.
\end{theorem}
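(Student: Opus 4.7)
The plan is to construct an explicit decomposition $v_h = R_0^\top v_0 + \sum_{i=1}^{N_\hlocal} R_i^\top v_i$ with $v_0 \in V_0$ chosen as the coarse approximant supplied by \Cref{lem:vh_R0TvH}, and then set $w_h := v_h - R_0^\top v_0 \in V_h$ and $v_i := R_i w_h$ for $i=1,\dots,N_\hlocal$. With this choice $w_h = \sum_{i=1}^{N_\hlocal} R_i^\top v_i$ in the sense of \Cref{lem:decVi}, and since $\mcal[A][i](v_i,v_i) = \mcal[A][h](R_i^\top v_i, R_i^\top v_i)$ by construction, rearranging the identity of \Cref{lem:decVi} applied to $w_h$ and using its cross-term bound yields
\begin{equation*}
\sum_{i=1}^{N_\hlocal} \mcal[A][i](v_i,v_i) \lesssim \mcal[A][h](w_h,w_h) + \|\sqrt{\rho}\,\nabla_h w_h\|_{L^2(\mcal[T][h])}^2 + \sum_{i=1}^{N_\hlocal} \|\sigma_{h,\rho}^{\nicefrac{1}{2}} w_h\|_{L^2(\partial \Om_i)}^2.
\end{equation*}
By continuity of $\mcal[A][h]$ (\Cref{lem:contcoerc_2}) the first two terms on the right are dominated by $\normDG[w_h][h,\rho][2]$, so everything reduces to estimating $\normDG[w_h][h,\rho][2]$ and the boundary sum over $\partial\Om_i$.

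To bound $\normDG[w_h][h,\rho][2]$ I would split it into a gradient and a jump part. For the gradient, $\int_{\mcal[T][h]}\rho|\nabla_h w_h|^2 \le \max_j \overline{\rho}_j\,|w_h|_{H^1(\mcal[T][h])}^2$, which via the global version of \Cref{lem:vh_R0TvH} stated in \Cref{lem:vh_R0TvH_Global} is controlled by $\max_j(\overline{\rho}_j/\underline{\rho}_j)\normDG[v_h][h,\rho][2]$. For the jumps I would partition $\mcal[F][h]$ into faces interior to some $\mcal[D][j]$ and faces lying on $\bigcup_j \partial\mcal[D][j]$. On the former set, the nested-grid structure and \Cref{rmrk:R_0T_Nest} imply that $R_0^\top v_0 = v_0$ is polynomial on each $\mcal[D][j]$, so $\jump{w_h}=\jump{v_h}$ there and the contribution is directly bounded by $\normDG[v_h][h,\rho][2]$. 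Since $\mcal[T][\hcoarse]$ is an agglomeration of $\mcal[T][\hlocal]$, the latter set of faces is contained in $\bigcup_i \partial\Om_i$; bounding a jump by its two one-sided traces then absorbs this term into the boundary sum $\sum_i \|\sigma_{h,\rho}^{\nicefrac{1}{2}} w_h\|_{L^2(\partial\Om_i)}^2$.

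The principal obstacle is this boundary sum, where the sharp $\hcoarse/\hfine$ and $\hcoarse^2/(\hfine\hlocal)$ rates must be produced. I would use $\sigma_{h,\rho}\lesssim \overline{\rho}_j\,p^2/\hfine$ together with the trace-inverse inequality of \Cref{lem:Trace}, followed by a single Cauchy--Schwarz step in $i$, and then plug in the three global estimates supplied by \Cref{lem:vh_R0TvH_Global}, namely $\|w_h\|_{L^2(\Om)}\lesssim (\hcoarse/q)\normDG[v_h][h,1]$, $|w_h|_{H^1(\mcal[T][h])}\lesssim \normDG[v_h][h,1]$, and (using again $\jump{w_h}=\jump{v_h}$ on interior coarse faces) $\|\sigma_{h,1}^{\nicefrac{1}{2}}\jump{w_h}\|_{L^2(\mcal[F][h,j]^I)}\lesssim \normDG[v_h][h,1]$. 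The three resulting contributions from \Cref{lem:Trace} then combine, after converting the norm $\normDG[v_h][h,1]$ into $\normDG[v_h][h,\rho]$ at the cost of a factor $1/\underline{\rho}_j$, into
\begin{equation*}
\sum_{i=1}^{N_\hlocal} \|\sigma_{h,\rho}^{\nicefrac{1}{2}} w_h\|_{L^2(\partial \Om_i)}^2 \lesssim \max_j\Bigl(\frac{\overline{\rho}_j}{\underline{\rho}_j}\Bigr)\Bigl(\frac{p^2}{q}\frac{\hcoarse}{\hfine} + \frac{p^2}{q^2}\frac{\hcoarse^2}{\hfine\hlocal}\Bigr)\normDG[v_h][h,\rho][2],
\end{equation*}
which is precisely the target rate.

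For the coarse contribution I would estimate $\mcal[A][0](v_0,v_0) = \mcal[A][h](R_0^\top v_0, R_0^\top v_0) \lesssim \normDG[R_0^\top v_0][h,\rho][2]$ by continuity, and then use $\normDG[R_0^\top v_0][h,\rho] \le \normDG[v_h][h,\rho] + \normDG[w_h][h,\rho]$ together with the estimate of $\normDG[w_h][h,\rho][2]$ already obtained. Summing the coarse and local contributions and invoking coercivity of $\mcal[A][h]$ (\Cref{lem:contcoerc_2}) to bound $\normDG[v_h][h,\rho][2] \lesssim \mcal[A][h](v_h,v_h)$ delivers the claimed value of $C_\sharp^2$. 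The main difficulty in carrying this out is the careful accounting of the penalty contributions on $\bigcup_j \partial\mcal[D][j]$: the argument depends crucially on the nested-grid assumption, without which those jumps would generate a spurious $p^2 q^2/(\hfine\hcoarse)$ factor that cannot be absorbed into the $\partial\Om_i$ trace-inverse estimate---this is precisely the reason the non-nested case is deferred to the appendix.
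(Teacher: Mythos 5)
Your overall strategy matches the paper's: the same decomposition with $v_0 = v_{\hcoarse}$ from \Cref{lem:vh_R0TvH}, the identity and cross-term bound from \Cref{lem:decVi}, the face partition into $\mcal[F][h]\setminus\mcal[F][\hcoarse]$ (where $\jump{R_0^\top v_0}=\mathbf 0$) and $\mcal[F][\hcoarse]$ (absorbed into $\bigcup_i\partial\Om_i$ via nestedness), and the trace inverse inequality of \Cref{lem:Trace} on each $\Om_i$. Your handling of $\mcal[A][0](v_0,v_0)$ via the triangle inequality for $\normDG[\cdot][h,\rho]$ is a minor but harmless stylistic departure from the paper's Young-inequality route.

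However, there is a genuine gap in the way you derive the $\rho$-dependence. You invoke the \emph{global} ($\rho=1$) estimates of \Cref{lem:vh_R0TvH_Global} and then claim to convert $\normDG[v_h][h,1]$ into $\normDG[v_h][h,\rho]$ ``at the cost of a factor $1/\underline{\rho}_j$.'' This is incoherent: the global norm $\normDG[v_h][h,1]$ spans all of $\Omega$, so the only conversion available is $\normDG[v_h][h,1][2] \leq (\min_k\underline\rho_k)^{-1}\normDG[v_h][h,\rho][2]$, and combined with the prefactor $\max_j\overline\rho_j$ this produces the constant $(\max_j\overline\rho_j)/(\min_k\underline\rho_k)$, which is generally far larger than the claimed $\max_j(\overline\rho_j/\underline\rho_j)$. (Take two coarse cells, one with $\rho\equiv 1$ and one with $\rho\equiv 10^6$: the sharp constant is $1$, yours is $10^6$.) The same defect afflicts your gradient estimate, which applies $\max_j\overline\rho_j$ followed by the global $H^1$ bound. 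To obtain the stated constant one must keep everything \emph{local to each $\mcal[D][j]$}: write the gradient term as $\sum_j\|\sqrt\rho\nabla_h w_h\|_{L^2(\mcal[D][j])}^2$ and apply the local estimates \eqref{eq:L2}--\eqref{eq:H1} of \Cref{lem:vh_R0TvH} on each $\mcal[D][j]$; and for the boundary sum, regroup $\sum_i$ into $\sum_j\sum_{i\in\mcal[I][j]}$ (using $\mcal[T][\hlocal]\subseteq\mcal[T][\hcoarse]$), perform Cauchy--Schwarz \emph{within each} $\mcal[I][j]$ so that the resulting factors are $|w_h|_{H^1(\mcal[T][h,j])}$, $\|w_h\|_{L^2(\mcal[D][j])}$ and $\|\sigma_{h,1}^{1/2}\jump{v_h}\|_{L^2(\mcal[F][h,j]^I)}$, then apply the local \Cref{lem:vh_R0TvH} on $\mcal[D][j]$ and convert with the purely local factor $\overline\rho_j/\underline\rho_j$. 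Only after this localization can one take $\max_j(\overline\rho_j/\underline\rho_j)$ outside the sum. Your ``single Cauchy--Schwarz step in $i$'' elides precisely this per-$j$ bookkeeping, and your appeal to \Cref{lem:vh_R0TvH_Global} is a shortcut that loses the claimed sharp dependence on the coefficient.
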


\begin{proof}
Given $v_h \in V_h$, we select $v_0 = v_\hcoarse$, where $v_\hcoarse \in V_\hcoarse$ is defined as in the proof of~\Cref{lem:vh_R0TvH}. Then, by employing~\Cref{lem:decVi}, $v_h - R_0^\top v_0$ can be uniquely decomposed as
$
v_h - R_0^\top v_0 = \sum_{i=1}^{N_\hlocal} R_i^\top v_i,
$
where $v_i = R_i (v_h - R_0^\top v_0)$, $i=1,\dots,N_\hlocal$, and
\begin{equation}\label{eq:A_vh_R0Tv0}
\Aa[h][v_h - R_0^\top v_0][ v_h - R_0^\top v_0] = \sum_{i=1}^{N_\hlocal} \Aa[i][v_i][v_i] + \sum_{i,j=1, i\ne j}^{N_\hlocal} \Aa[h][R_i^\top v_i][R_j^\top v_j].
\end{equation}
Adding $\Aa[0][v_0][v_0]$ to both sides of~\eqref{eq:A_vh_R0Tv0} we obtain the following inequality:
\begin{align}\label{eq:Sum_Ai_vi}
\begin{aligned}
\Bigl| \sum_{i=0}^{N_\hlocal} \Aa[i][v_i][v_i] \Bigr| & \le \Bigl| \Aa[h][v_h - R_0^\top v_0][v_h - R_0^\top v_0] \Bigr|   +    \Bigl| \Aa[0][v_0][v_0]  \Bigr| \\
& \quad + \Bigl| \sum_{i,j=1, i\ne j}^{N_\hlocal} \Aa[h][R_i^\top v_i][R_j^\top v_j] \Bigr| \\
& \equiv \RomanNumeralCaps{1} + \RomanNumeralCaps{2} + \RomanNumeralCaps{3}.
\end{aligned}
\end{align}
From the definition of $\Aa[0]$, cf.~\eqref{eq:A0}, we have that
\begin{align}\label{eq:B}
\begin{aligned}
\RomanNumeralCaps{2} & \le |\Aa[h][R_0^{\top} v_0 - v_h][R_0^{\top} v_0]| + |\Aa[h][v_h][R_0^{\top} v_0]| \\
& \le |\Aa[h][R_0^{\top} v_0 - v_h][R_0^{\top} v_0 - v_h]| + 2 |\Aa[h][R_0^{\top} v_0 - v_h][v_h]| + |\Aa[h][v_h][v_h]|.
\end{aligned}
\end{align}
Recalling the continuity of $\mcal[A][h]$, cf.~\Cref{lem:contcoerc_2}, and applying the triangle inequality and Young's inequality gives
\begin{align}
|\Aa[h][R_0^{\top} v_0 - v_h][v_h]| & \lesssim \normDG[v_h - R_0^{\top} v_0][h,\rho] \normDG[v_h][h,\rho]
 \lesssim \normDG[v_h - R_0^\top v_0][h,\rho]^2 + \normDG[v_h][h,\rho]^2.
\end{align}
Then, by inserting the above bound into~\eqref{eq:B} and using the continuity and coercivity of $\mcal[A][h]$, cf.~\Cref{lem:contcoerc_2}, we deduce that
\begin{equation}\label{eq:AplusB}
\RomanNumeralCaps{1} + \RomanNumeralCaps{2} \lesssim \normDG[v_h - R_0^\top v_0][h,\rho]^2 + \Aa[h][v_h][v_h].
\end{equation}
In particular, we observe that from the definition of $\| \cdot \|_{h,\rho}$ we have
\begin{align}\label{eq:TheNormDGofIt}
\normDG[v_h - R_0^\top v_0][h,\rho]^2  = \| \sqrt{\rho} \nabla_h (v_h - R_0^{\top}v_0) \|_{L^2(\mcal[T][h])}^2 + \| \sigma_{h,\rho}^{\nicefrac{1}{2}} \jump{v_h - R_0^{\top} v_0} \|_{L^2(\mcal[F][h])}^2.
\end{align}
Writing $\mcal[F][\hcoarse]$ to denote the set of faces of $\mcal[T][\hcoarse]$, and observing that $\mcal[F][\hcoarse] \subseteq \mcal[F][\hfine]$ since $	\mcal[T][\hcoarse] \subseteq \mcal[T][\hfine]$, the second term on the right-hand side
of~\eqref{eq:TheNormDGofIt} can be bounded as follows:
\begin{align}
\| \sigma_{h,\rho}^{\nicefrac{1}{2}} \jump{v_h \! -\! R_0^{\top} v_0} \|_{L^2(\mcal[F][h])}^2
& = \| \sigma_{h,\rho}^{\nicefrac{1}{2}} \jump{v_h - R_0^{\top} v_0} \|_{L^2(\mcal[F][h]\setminus \mcal[F][\hcoarse])}^2
 \!+ \| \sigma_{h,\rho}^{\nicefrac{1}{2}} \jump{v_h - R_0^{\top} v_0} \|_{L^2(\mcal[F][\hcoarse])}^2 \\
& =  \| \sigma_{h,\rho}^{\nicefrac{1}{2}} \jump{v_h} \|_{L^2(\mcal[F][h]\setminus \mcal[F][\hcoarse])}^2
+ \| \sigma_{h,\rho}^{\nicefrac{1}{2}} \jump{v_h - R_0^{\top} v_0} \|_{L^2(\mcal[F][\hcoarse])}^2\\
& \le \normDG[v_h][h,\rho]^2\ +\ \sum_{j=1}^{N_{\hcoarse}} \| \sigma_{h,\rho}^{\nicefrac{1}{2}} (v_h - R_0^{\top} v_0) \|_{L^2(\partial \mcal[D][j])}^2 \\
& \le \normDG[v_h][h,\rho]^2\ +\ \sum_{i=1}^{N_{\hlocal}} \| \sigma_{h,\rho}^{\nicefrac{1}{2}} (v_h - R_0^{\top} v_0) \|_{L^2(\partial \Om_i)}^2, \label{eq:BoundForDGNormOfIT}
\end{align}
where we have used that $\jump{R_0^{\top} v_0} = \mathbf{0}$ on each face $F \in \mcal[F][h] \setminus \mcal[F][\hcoarse]$ and, in the last step, the fact that $\mcal[T][\hlocal] \subseteq \mcal[T][\hcoarse]$; cf. \Cref{rmrk:assump_nested_meshes}.
Hence, inserting~\eqref{eq:BoundForDGNormOfIT} into~\eqref{eq:TheNormDGofIt} and employing~\Cref{lem:contcoerc_2}, inequality~\eqref{eq:AplusB} becomes
\begin{equation}
\RomanNumeralCaps{1} + \RomanNumeralCaps{2} \lesssim \| \sqrt{\rho}\ \nabla_h(v_h - R_0^\top v_0)\|_{L^2(\mcal[T][h])}^2
 +\ \sum_{i=1}^{N_{\hlocal}} \| \sigma_{h,\rho}^{\nicefrac{1}{2}} (v_h - R_0^{\top} v_0) \|_{L^2(\partial \Om_i)}^2 + \Bigr| \Aa[h][v_h][v_h] \Bigr|.
\end{equation}
From~\Cref{lem:decVi} we get
\begin{equation}\label{eq:BoundOfTermC}
 \RomanNumeralCaps{3} \lesssim \| \sqrt{\rho}\ \nabla_h(v_h - R_0^\top v_0)\|_{L^2(\mcal[T][h])}^2\ +\ \sum_{i=1}^{N_\hlocal} \| \sigma_{h,\rho}^{\nicefrac{1}{2}} (v_h - R_0^\top v_0)\|_{L^2(\partial \Om_i)}^2.
\end{equation}
Thereby,~\eqref{eq:Sum_Ai_vi} can be bounded as follows
\begin{align}\label{eq:Sum_Ai_vi_2}
\begin{aligned}
\Bigl| \sum_{i=0}^{N_\hlocal} \Aa[i][v_i][v_i] \Bigr| & \lesssim \, \Bigr| \Aa[h][v_h][v_h] \Bigr| + \| \sqrt{\rho}\ \nabla_h(v_h - R_0^\top v_0) \|_{L^2(\mcal[T][h])}^2 \\
& \quad + \sum_{i=1}^{N_\hlocal} \| \sigma_{h,\rho}^{\nicefrac{1}{2}} (v_h - R_0^\top v_0)\|_{L^2(\partial \Om_i)}^2 \\
& \equiv \RomanNumeralCaps{4} + \RomanNumeralCaps{5} + \RomanNumeralCaps{6}.
\end{aligned}
\end{align}
Thanks to~\Cref{lem:vh_R0TvH} we have that
\begin{align}\label{eq:circ2}
\begin{aligned}
\RomanNumeralCaps{5} & = \sum_{j=1}^{N_{\hcoarse}}\| \sqrt{\rho} \nabla_h(v_h - R_0^\top v_0 )\|_{L^2(\mcal[D][j])}^2 \lesssim \sum_{j=1}^{N_{\hcoarse}} \overline{\rho}_j \| \nabla_h(v_h - R_0^\top v_0 )\|_{L^2(\mcal[D][j])}^2 \\
& \lesssim \sum_{j=1}^{N_{\hcoarse}} \overline{\rho}_j  \Bigl[\| \nabla_h v_h \|_{L^2(\mcal[T][h,j])}^2 + \| \sigma_{h,1}^{\nicefrac{1}{2}} \jump{v_h} \|_{L^2(\mcal[F][h,j]^I)}^2 \Bigr] \\
& \lesssim \sum_{j=1}^{N_{\hcoarse}} \nicefrac{\overline{\rho}_j}{\underline{\rho}_j} \Bigl[\| \sqrt{\rho}\ \nabla_h v_h \|_{L^2(\mcal[T][h,j])}^2 + \| \sigma_{h,\rho}^{\nicefrac{1}{2}} \jump{v_h} \|_{L^2(\mcal[F][h,j]^I)}^2 \Bigr]\\
& \lesssim \max_{j=1,\dots,N_{\hcoarse}} \Bigl( \nicefrac{\overline{\rho}_j}{\underline{\rho}_j} \Bigr)  \normDG[v_h][h,\rho]^2 \lesssim \max_{j=1,\dots,N_{\hcoarse}} \Bigl( \nicefrac{\overline{\rho}_j}{\underline{\rho}_j} \Bigr) \Aa[h][v_h][v_h],
\end{aligned}
\end{align}
where we have used the coercivity bound from~\Cref{lem:contcoerc_2} in the last inequality. The bound on term $\RomanNumeralCaps{6}$ can be deduced by using the inverse trace inequality of~\Cref{lem:Trace}. To this end, we first observe that
\begin{equation}\label{eq:3lesssim}
\RomanNumeralCaps{6} \lesssim \sum_{i=1}^{N_\hlocal} \nicefrac{p^2 \max_{\{ \elem \subset \Om_i\}} \rho_{\elem} }{h} \| v_h - R_0^\top v_0 \|_{L^2(\partial \Om_i )}^2,
\end{equation}
where we have also employed the definition of $\sigma_{h,\rho}$ and the fact that $\harmonic{\rho_{\elem}}|_{F} \le 2 \rho_{\elem^{\pm}}$ for any $F \subset \partial \Om_i$, $F \subset \partial \elem^{\pm}$, for some $\elem^{\pm} \in \mcal[T][\hfine]$, which implies that $\harmonic{\rho_{\elem}}|_F \le 2 \max_{\{ \elem \subset \Om_i\}} \rho_{\elem}$ for all $F \in \mcal[F][\hfine]$ such that $F \subset \partial \Om_i$. Then, by applying~\Cref{lem:Trace} to each $\Om_i \in \mcal[T][\hlocal]$, $i=1,\dots,N_{\hlocal}$, from~\eqref{eq:3lesssim} we obtain the following bound:
\begin{align}
\RomanNumeralCaps{6} \lesssim \sum_{i=1}^{N_{\hlocal}} &\nicefrac{p^2 \max_{\{ \elem \subset \Om_i\}} \rho_{\elem} }{h} \Bigl[  \| \nabla_h (v_h - R_0^\top v_0) \|_{L^2(\Om_i)} \| v_h - R_0^\top v_0 \|_{L^2(\Om_i)} \\
& + \nicefrac{1}{\hlocal} \, \| v_h - R_0^\top v_0 \|_{L^2(\Om_i)}^2\\
& + \Bigl( \sum_{F \in \mcal[F][h](\Om_i)} \| \sigma_{h,1}^{\nicefrac{1}{2}} \jump{v_h - R_0^\top v_0} \|_{L^2(F)}^2 \Bigr)^{\nicefrac{1}{2}} \| v_h - R_0^\top v_0 \|_{L^2(\Om_i)} \Bigr].
\end{align}
Since $\mcal[T][\hlocal] \subseteq \mcal[T][\hcoarse]$, we denote by $\mcal[I][j] := \{ k: 1\le k \le N_{\hlocal},\ \Om_k \in \mcal[T][\hlocal]\text{ and } \Om_k \subset \mcal[D][j]\}$ the set of indices that correspond to the subdomains inside $\mcal[D][j] \in \mcal[T][\hcoarse]$, for all $j=1,\dots,N_{\hcoarse}$. Hence, $\mcal[I][j] \cap \mcal[I][k] = \emptyset$ for any $j \ne k,\ 1\le j,k \le N_{\hcoarse}$, and $\cup_{j=1}^{N_{\hcoarse}} \mcal[I][j] = \{1,\dots,N_{\hlocal}\}$. Then,
\begin{align}\label{eq:ThisThirdTermHere}
\begin{aligned}
\RomanNumeralCaps{6} & \lesssim \sum_{j=1}^{N_{\hcoarse}} \sum_{i \in \mcal[I][j]} \nicefrac{p^2 \max_{\{ \elem \subset \Om_i\}} \rho_{\elem} }{h} \Bigl[  \| \nabla_h (v_h - R_0^\top v_0) \|_{L^2(\Om_i)} \| v_h - R_0^\top v_0 \|_{L^2(\Om_i)} \\
& \qquad + \nicefrac{1}{\hlocal} \,\| v_h - R_0^\top v_0 \|_{L^2(\Om_i)}^2\\
& \qquad + \Bigl( \sum_{F \in \mcal[F][h](\Om_i)} \| \sigma_{h,1}^{\nicefrac{1}{2}} \jump{v_h - R_0^\top v_0} \|_{L^2(F)}^2 \Bigr)^{\nicefrac{1}{2}} \| v_h - R_0^\top v_0 \|_{L^2(\Om_i)} \Bigr] \\
& \lesssim \sum_{j=1}^{N_{\hcoarse}} \nicefrac{p^2 \overline{\rho}_j}{h} \Bigl[ \sum_{i \in \mcal[I][j]}  \| \nabla_h (v_h - R_0^\top v_0) \|_{L^2(\Om_i)} \| v_h - R_0^\top v_0 \|_{L^2(\Om_i)}  \\
& \qquad + \nicefrac{1}{\hlocal} \, \sum_{i \in \mcal[I][j]}  \| v_h - R_0^\top v_0 \|_{L^2(\Om_i)}^2 \\
& \qquad + \sum_{i \in \mcal[I][j]}  \Bigl( \sum_{F \in \mcal[F][h](\Om_i)} \| \sigma_{h,1}^{\nicefrac{1}{2}} \jump{v_h - R_0^\top v_0} \|_{L^2(F)}^2 \Bigr)^{\nicefrac{1}{2}} \| v_h - R_0^\top v_0 \|_{L^2(\Om_i)} \Bigr].
\end{aligned}
\end{align}
We now proceed by bounding each term present in the bracket in \eqref{eq:ThisThirdTermHere}; to this end, using the Cauchy--Schwarz inequality for sums, we get
\begin{align}
\sum_{i \in \mcal[I][j]} \| \nabla_h (v_h - & R_0^\top v_0) \|_{L^2(\Om_i)} \| v_h - R_0^\top v_0 \|_{L^2(\Om_i)} \\
& \le \Bigl( \sum_{i \in \mcal[I][j]} \| \nabla_h (v_h - R_0^\top v_0) \|_{L^2(\Om_i)}^2 \Bigr)^{\nicefrac{1}{2}} \Bigl( \sum_{i \in \mcal[I][j]} \| v_h - R_0^\top v_0 \|_{L^2(\Om_i)}^2 \Bigr)^{\nicefrac{1}{2}} \\
& = | v_h - R_0^\top v_0 |_{H^1(\mcal[T][h,j])} \| v_h - R_0^\top v_0 \|_{L^2(\mcal[D][j])}. \label{eq:BoundFirstTermHere}
\end{align}
Similarly, by noting that $\mcal[F][h](\Om_i)$ is the set of faces $F \in \mcal[F][\hfine]$ strictly contained in $\Om_i$, and therefore $\cup_{i \in \mcal[I][j]} \mcal[F][h](\Om_i) \subset  \mcal[F][h,j]^I$, we deduce that
\begin{align}
\sum_{i \in \mcal[I][j]} \Bigl(  \sum_{F \in \mcal[F][h](\Om_i)} & \| \sigma_{h,1}^{\nicefrac{1}{2}} \jump{v_h - R_0^\top v_0} \|_{L^2(F)}^2 \Bigr)^{\nicefrac{1}{2}} \| v_h - R_0^\top v_0 \|_{L^2(\Om_i)} \\
& \le \Bigl( \sum_{i \in \mcal[I][j]} \| \sigma_{h,1}^{\nicefrac{1}{2}} \jump{v_h - R_0^\top v_0} \|_{L^2(\mcal[F][h](\Om_i))}^2 \Bigr)^{\nicefrac{1}{2}} \Bigl( \sum_{i \in \mcal[I][j]} \| v_h - R_0^\top v_0 \|_{L^2(\Om_i)}^2 \Bigr)^{\nicefrac{1}{2}} \\
& \le \Bigl( \sum_{F \in \mcal[F][h,j]^I} \| \sigma_{h,1}^{\nicefrac{1}{2}} \jump{v_h - R_0^\top v_0} \|_{L^2(F)}^2 \Bigr)^{\nicefrac{1}{2}} \| v_h - R_0^\top v_0 \|_{L^2(\mcal[D][j])}.
\end{align}
Noting that
$\sum_{i \in \mcal[I][j]}  \| v_h - R_0^\top v_0 \|_{L^2(\Om_i)}^2 = \| v_h - R_0^\top v_0 \|_{L^2(\mcal[D][j])}^2$ gives
\begin{align}
\RomanNumeralCaps{6} \lesssim \sum_{j=1}^{N_{\hcoarse}} & \nicefrac{p^2 \overline{\rho}_j}{h} \Bigl[ |v_h - R_0^\top v_0|_{H^1(\mcal[T][h,j])} \| v_h - R_0^\top v_0 \|_{L^2(\mcal[D][j])}
 + \nicefrac{1}{\hlocal} \, \| v_h - R_0^\top v_0 \|_{L^2(\mcal[D][j])}^2\\
& + \Bigl( \sum_{F \in \mcal[F][h,j]^I} \| \sigma_{h,1}^{\nicefrac{1}{2}} \jump{v_h - R_0^\top v_0} \|_{L^2(F)}^2 \Bigr)^{\nicefrac{1}{2}} \| v_h - R_0^\top v_0 \|_{L^2(\mcal[D][j])} \Bigr]. \label{eq:AnotherThirdTermHere}
\end{align}
The last term on the right-hand side of~\eqref{eq:AnotherThirdTermHere} can rewritten as
\begin{align}
\Bigl( \sum_{F \in \mcal[F][h,j]^I}  \| \sigma_{h,1}^{\nicefrac{1}{2}} \jump{v_h - R_0^\top v_0} & \|_{L^2(F)}^2 \Bigr)^{\nicefrac{1}{2}} \| v_h - R_0^\top v_0 \|_{L^2(\mcal[D][j])} \\
& = \Bigl( \sum_{F \in \mcal[F][h,j]^I} \| \sigma_{h,1}^{\nicefrac{1}{2}} \jump{v_h} \|_{L^2(F)}^2 \Bigr)^{\nicefrac{1}{2}} \| v_h - R_0^\top v_0 \|_{L^2(\mcal[D][j])} \\
& = \| \sigma_{h,1}^{\nicefrac{1}{2}} \jump{v_h} \|_{L^2(\mcal[F][h,j]^I)}   \| v_h - R_0^\top v_0 \|_{L^2(\mcal[D][j])};
\end{align}
 here we observe that $\jump{R_0^{\top} v_0}|_F = \mathbf{0}$ on each $F \in \mcal[F][h,j]^I$, since $\mcal[T][\hfine]$ and $\mcal[T][\hcoarse]$ are nested. 
Then, by employing the above estimate together with ~\Cref{lem:vh_R0TvH}, we deduce that
\begin{align}\label{eq:circ3}
\begin{aligned}
\RomanNumeralCaps{6} & \lesssim \sum_{j=1}^{N_{\hcoarse}} \Bigl[ \frac{p^2 \overline{\rho}_j}{h} \Bigl( \frac{\hcoarse}{q} + \frac{1}{q^2} \frac{\hcoarse^2}{ \hlocal} \Bigr) \Bigl(\| \nabla_h v_h \|_{L^2(\mcal[T][h,j])}^2 + \| \sigma_{h,1}^{\nicefrac{1}{2}} \jump{v_h} \|_{L^2(\mcal[F][h,j]^I)}^2 \Bigr) \Bigr] \\
& \lesssim \sum_{j=1}^{N_{\hcoarse}} \frac{\overline{\rho}_j}{\underline{\rho}_j} \Bigl( \frac{p^{2}}{q} \frac{\hcoarse}{h} + \frac{p^2}{q^2} \frac{\hcoarse^2}{h \hlocal} \Bigr) \Bigl(\| \sqrt{\rho}\ \nabla_h v_h \|_{L^2(\mcal[T][h,j])}^2 + \| \sigma_{h,\rho}^{\nicefrac{1}{2}} \jump{v_h} \|_{L^2(\mcal[F][h,j]^I)}^2\Bigr) \\
& \lesssim \max_{j=1,\dots,N_{\hcoarse}} \Bigl( \frac{\overline{\rho}_j}{\underline{\rho}_j} \Bigr) \Bigl(\frac{p^{2}}{q} \frac{\hcoarse}{h} + \frac{p^2}{q^2} \frac{\hcoarse^2}{h \hlocal} \Bigr) \Aa[h][v_h][v_h],
\end{aligned}
\end{align}
where we have also made use of the coercivity bound of~\Cref{lem:contcoerc_2} in the last inequality. Inserting the estimates~\eqref{eq:circ2} and~\eqref{eq:circ3} into~\eqref{eq:Sum_Ai_vi_2} we obtain the desired result.
\end{proof}


\begin{remark}\label{rmrk:Cond}
According to the statement of~\Cref{thm:TheoremValidityAssumption3}, given that Assumptions~\ref{ass:LocSta} and~\ref{ass:StrCauSchIne} hold, using~\Cref{thm:TosWil} we deduce that
\begin{equation}\label{eq:cond}
K(P_{ad}) \lesssim \max_{1\le j \le N_{\hcoarse}} \Bigl( \frac{\overline{\rho}_j}{\underline{\rho}_j} \Bigr) \Bigl(\frac{p^{2}}{q} \frac{\hcoarse}{h} + \frac{p^2}{q^2} \frac{\hcoarse^2}{h \hlocal} \Bigr)(N_{\mathbb{S}}+1).
\end{equation}
In particular, in the lowest order case, i.e., when $p=q=1$, we have
$K_h(P_{ad}) \lesssim \nicefrac{\hcoarse^2}{h \hlocal}$,
which is in agreement with the corresponding bound derived in~\cite{DrKr2016}. On the other hand if the size of the coarse subdomain and fine meshes are fixed, we deduce that
$K_p(P_{ad}) \lesssim \nicefrac{p^{2}}{q}$.
Moreover, we also observe that if the diffusion coefficient $\rho$ is constant on each subdomain $\mcal[D][j],\ j=1,\dots,N_{\hcoarse}$, then the condition number is independent of the jump in $\rho$.
\end{remark}

\begin{remark}\label{rmrk:Cond2}
We remark that~\Cref{ass:Dj1} is needed in order to obtain the local estimates of~\Cref{lem:vh_R0TvH}, which allow to bound $K(P_{ad})$ with $\max_{1\le j \le N_{\hcoarse}} ( \nicefrac{\overline{\rho}_j}{\underline{\rho}_j})$, cf.~\eqref{eq:cond}. However, if the diffusion coefficient $\rho$ is constant on $\Omega$, we point out that the analysis can be simplified by employing the global estimates of~\Cref{lem:vh_R0TvH_Global} without making~\Cref{ass:Dj1}. We refer to~\cite{AnHoSm2016} for further details.
\end{remark}

\section{Numerical results}\label{sec:NumRes}
In this section, we present a series of numerical experiments to demonstrate the sharpness of the condition number bounds stated in Remarks~\ref{rmrk:Cond} and \ref{rmrk:Cond2}. Throughout this section we solve~\eqref{eq:dG} by using the additive Schwarz Preconditioned Conjugate Gradient (ASPCG) method; here, we report the number of iterations needed to reduce the Euclidean norm of the relative residual vector below a tolerance of $10^{-8}$, based on starting from the trivial initial guess. Furthermore, we estimate the condition number $K(P_{ad})$ by using the extreme-eigenvalue-estimate based on the ASPCG iterations. For the sake of simplicity of the presentation, we only consider the massively parallel case, i.e., when $\mcal[T][\hlocal] = \mcal[T][\hfine]$. Furthermore, here we select the penalty parameter $C_{\sigma} = 10$.

\subsection{Example 1}\label{sec:TeCa6}

\begin{figure}[t]
\centering
\begin{tabular}{cc}
\includegraphics[width=0.35\textwidth]{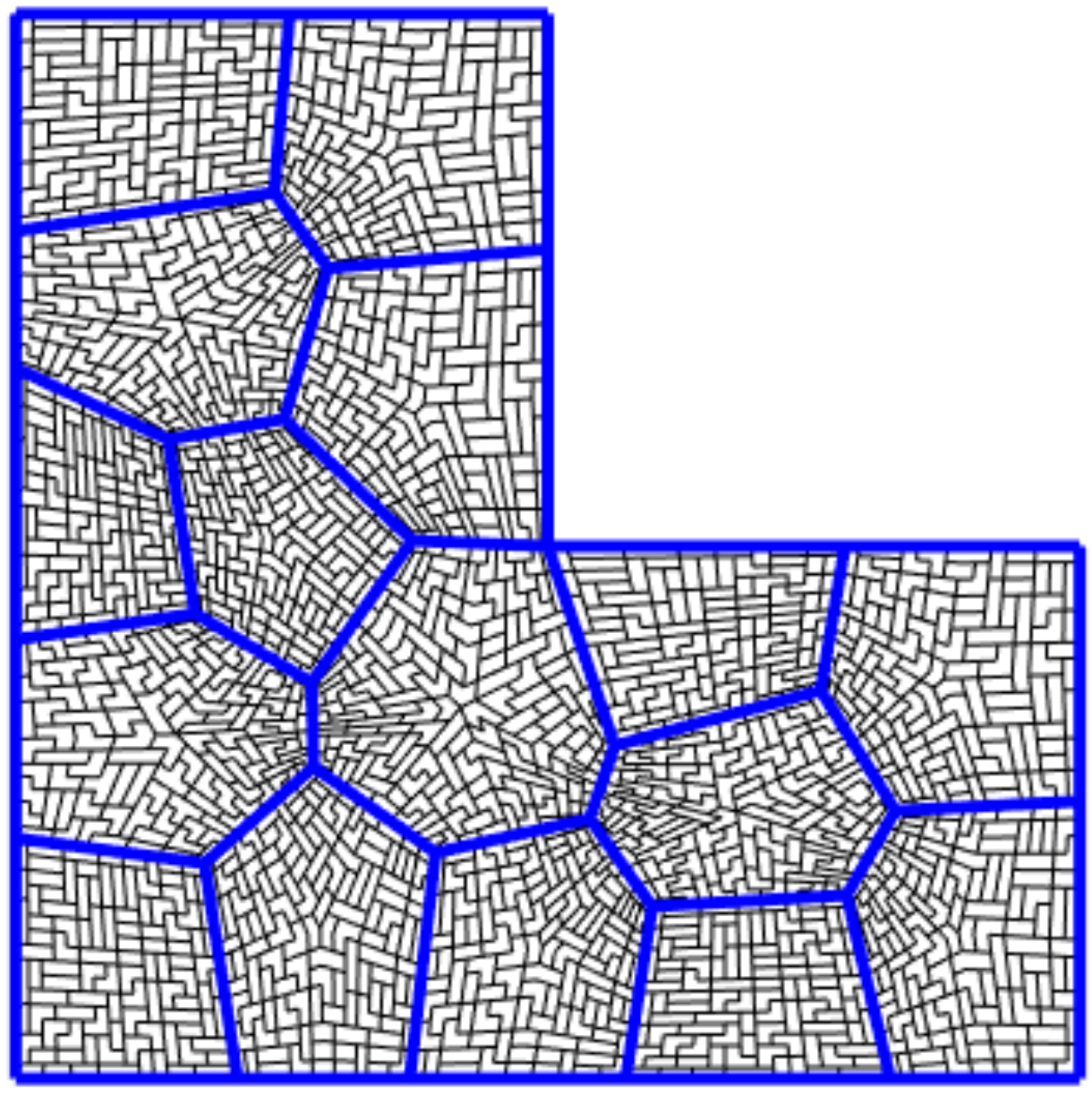}  &
\includegraphics[width=0.35\textwidth]{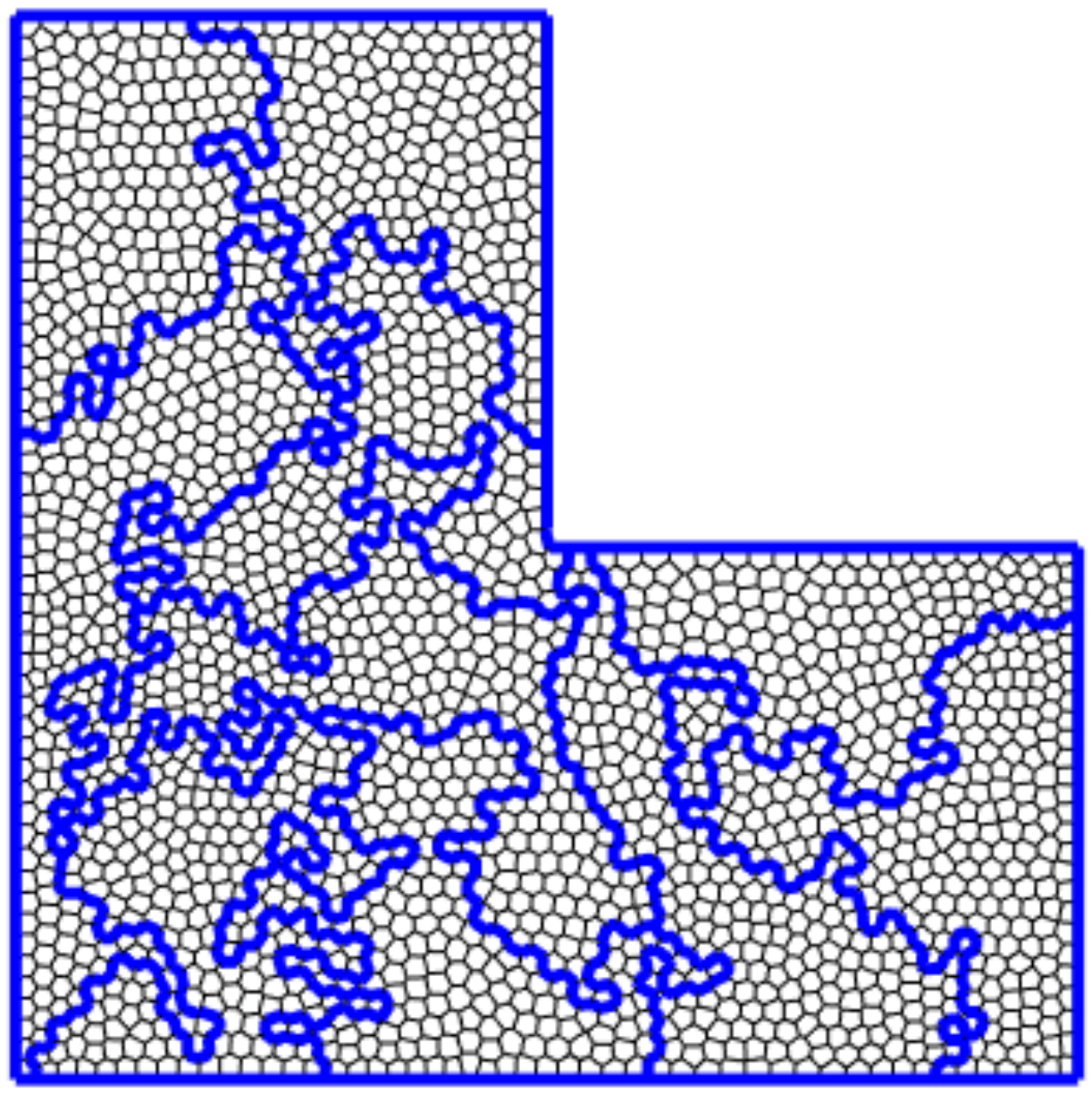}
\end{tabular}
\caption{Example 1. Nested polygonal grids $\mcal[T][h]$ (thin) and $\mcal[T][\hcoarse]$ (thick) on an L-shaped domain, when the elements of $\mcal[T][\hcoarse]$ are convex (left) and non-convex (right).}\label{fig:LshapedDomain}
\end{figure}

In this first example we investigate the dependence of the condition number of $P_{ad}$ on the diffusion coefficient $\rho$. Based on \Cref{rmrk:Cond}, we expect the condition number of the preconditioned system to be dependent on the choice of the coarse grid $\mcal[T][\hcoarse]$. More precisely, if $\mcal[T][\hcoarse]$ is chosen to be aligned with the discontinuities of $\rho$, then $K(P_{ad})$ should be independent of the jump in the coefficient $\rho$; otherwise it will depend on the maximum ratio between the maximum and the minimum value of $\rho$ present inside the subdomains $\mcal[D][j] \in \mcal[T][\hcoarse]$. To verify this behavior, we consider two experiments based on fine/coarse grids $\mcal[T][h]/\mcal[T][\hcoarse]$, respectively, where \mcal[T][\hcoarse] is a Voronoi polygonal grid on the L-shaped domain $\Om$ depicted in Figure~\ref{fig:LshapedDomain} with $16$ polygonal elements and $\mcal[T][\hfine]$ is obtained by successive refinement of elements of $\mcal[T][\hcoarse]$. Here, we observe that the elements of $\mcal[T][\hcoarse]$ are convex and satisfy~\Cref{ass:Dj1}, cf.~\Cref{fig:LshapedDomain} (left). Moreover, we choose the polynomial degrees to be either $p=q=1$ or $p=q=2$. In the first experiment we fix $\rho|_{ \mcal[D][j] } = \rho_{o} = 1$ on the elements $\mcal[D][j]\in \mcal[T][\hcoarse]$ with odd index $j$ and set $\rho|_{ \mcal[D][j] } = \rho_{e} \in \{10^1,10^2,\dots,10^6\}$, in each test case, on the polygonal subdomains with even index $j$. The results shown in the first two lines of~\Cref{tab:RhoAlignedAndNotAligned} confirm the independence with respect to the jumps of $\rho$ when those jumps are aligned with the subdomains of $\mcal[T][\hcoarse]$. In the second experiment we proceed similarly, but here we take different values of $\rho$ on odd and even polygonal elements $\elem \in \mcal[T][h]$: in this way $\mcal[T][\hcoarse]$ is not aligned with the discontinuities of $\rho$, and hence the ratio between the maximum and the minimum value of $\rho$ inside the polygonal subdomains $\mcal[D][j] \in \mcal[T][\hcoarse]$ is given by $\rho_{e}$. As expected from the theory, the results presented in the last two lines of~\Cref{tab:RhoAlignedAndNotAligned} show that the condition number of $P_{ad}$ grows linearly with $\rho_{e}$.
Finally, we repeat the same set of experiments by first selecting $\mcal[T][\hfine]$ as a Voronoi polygonal grid consisting of $2000$ elements and, subsequently, define the coarse grid $\mcal[T][\hcoarse]$ by successive agglomerations of elements of $\mcal[T][\hfine]$. The agglomeration is undertaken based on employing \emph{Metis}, cf.~\cite{KaKu2009}. As shown in~\Cref{fig:LshapedDomain} (right), the elements of $\mcal[T][\hcoarse]$ are clearly non-convex in this example. Although~\Cref{ass:Dj1} is not satisfied in this case, the results of~\Cref{tab:RhoAlignedAndNotAlignedNonCOnvex} illustrate analogous behavior to that observed in the previous setting when the coarse elements were convex.

\begin{table}[t!]
\centering
\footnotesize
\begin{tabular}{l l||llllllll}
 & & \multicolumn{5}{l}{$\rho_{e}\ \to$} \\
\hhline{~~------}
 &     & $10$  & $10^2$  & $10^3$  & $10^4$ & $10^5$ & $10^6$  \\
\hline
\hline
Aligned & $p=1$   & $2.31\cdot 10^2$  & $2.33\cdot 10^2$  & $2.34\cdot 10^2$  & $2.34\cdot 10^2$ & $2.34\cdot 10^2$ & $2.34\cdot 10^2$  \\
    & $p=2$   & $6.12\cdot 10^2$  & $6.14\cdot 10^2$  & $6.14\cdot 10^2$  & $6.12\cdot 10^2$ & $6.11\cdot 10^2$ & $6.10\cdot 10^2$  \\
\hline
\hline
Not Aligned & $p=1$  & $2.92\cdot 10^2$  & $1.04\cdot 10^3$  & $7.73\cdot 10^3$  & $7.42\cdot 10^4$ & $7.39\cdot 10^5$ & $7.38\cdot 10^6$  \\
 & $p=2$  & $6.54\cdot 10^2$  & $2.26\cdot 10^3$  & $1.71\cdot 10^4$  & $1.65\cdot 10^5$ & $1.64\cdot 10^6$ & $1.64\cdot 10^7$  \\
\end{tabular}

\vspace*{0.3cm}

\caption{Example 1. Condition number $K(P_{ad})$ as a function of the maximum jump in $\rho$ when the polygonal elements of $\mcal[T][\hcoarse]$ are convex and $\mcal[T][\hcoarse]$ is aligned (top) and not aligned (bottom) with the discontinuities of $\rho$.}
\label{tab:RhoAlignedAndNotAligned}
\end{table}

\begin{table}[t!]
\centering
\footnotesize
\begin{tabular}{l l||llllllll}
 & & \multicolumn{5}{l}{$\rho_{e}\ \to$} \\
\hhline{~~------}
 &     & $10$  & $10^2$  & $10^3$  & $10^4$ & $10^5$ & $10^6$  \\
\hline
\hline
Aligned & $p=1$   & $9.65\cdot 10^2$  & $1.15\cdot 10^3$  & $1.20\cdot 10^3$  & $1.21\cdot 10^3$ & $1.21\cdot 10^3$ & $1.21\cdot 10^3$  \\
    & $p=2$   & $2.47\cdot 10^3$  & $2.86\cdot 10^3$  & $3.18\cdot 10^3$  & $3.25\cdot 10^3$ & $3.26\cdot 10^3$ & $3.26\cdot 10^3$  \\
\hline
\hline
Not Aligned & $p=1$  & $8.02\cdot 10^2$  & $2.09\cdot 10^3$  & $1.68\cdot 10^4$  & $1.65\cdot 10^5$ & $1.64\cdot 10^6$ & $1.64\cdot 10^7$  \\
 & $p=2$  & $2.36\cdot 10^3$  & $6.10\cdot 10^3$  & $4.74\cdot 10^4$  & $4.62\cdot 10^5$ & $4.61\cdot 10^6$ & $4.61\cdot 10^7$  \\
\end{tabular}

\vspace*{0.3cm}

\caption{Example 1. Condition number $K(P_{ad})$ as a function of the maximum jump in $\rho$ when the polygonal elements of $\mcal[T][\hcoarse]$ are non-convex and $\mcal[T][\hcoarse]$ is aligned (top) and not aligned (bottom) with the discontinuities of $\rho$.}
\label{tab:RhoAlignedAndNotAlignedNonCOnvex}
\end{table}


\subsection{Example 2}\label{sec:TeCa3}

\begin{figure}[t!]
\centering
\begin{tabular}{cccc}
 $\mcal[T][\hfine]$ & $\hcoarse = 2 h$ & $\hcoarse = 4 h$ & $\hcoarse = 8 h$\\
\subfloat{
	  \raisebox{-.45\height}{\includegraphics[width=0.2\textwidth]{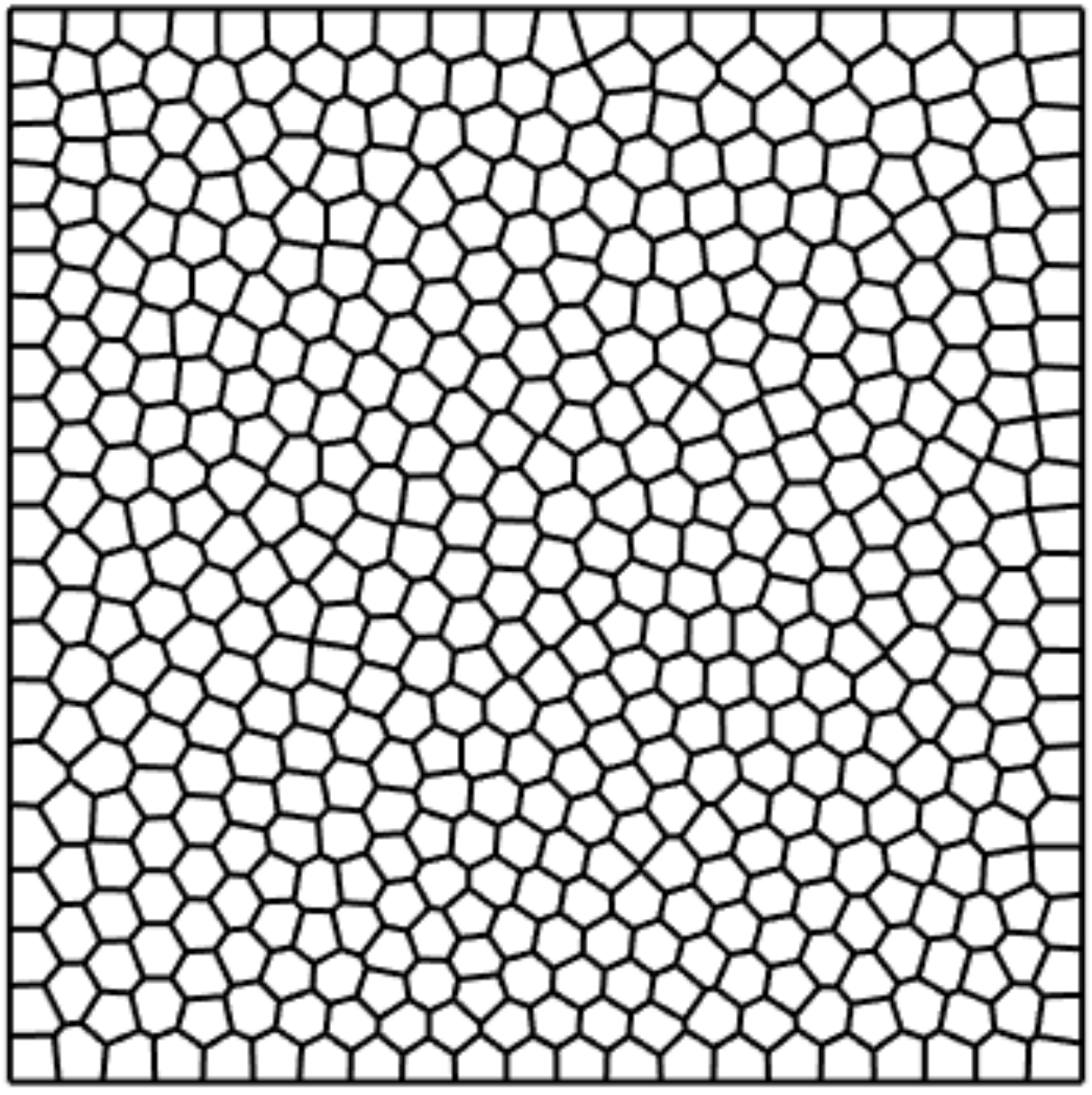}}}  & \subfloat{
          \raisebox{-.45\height}{\includegraphics[width=0.2\textwidth]{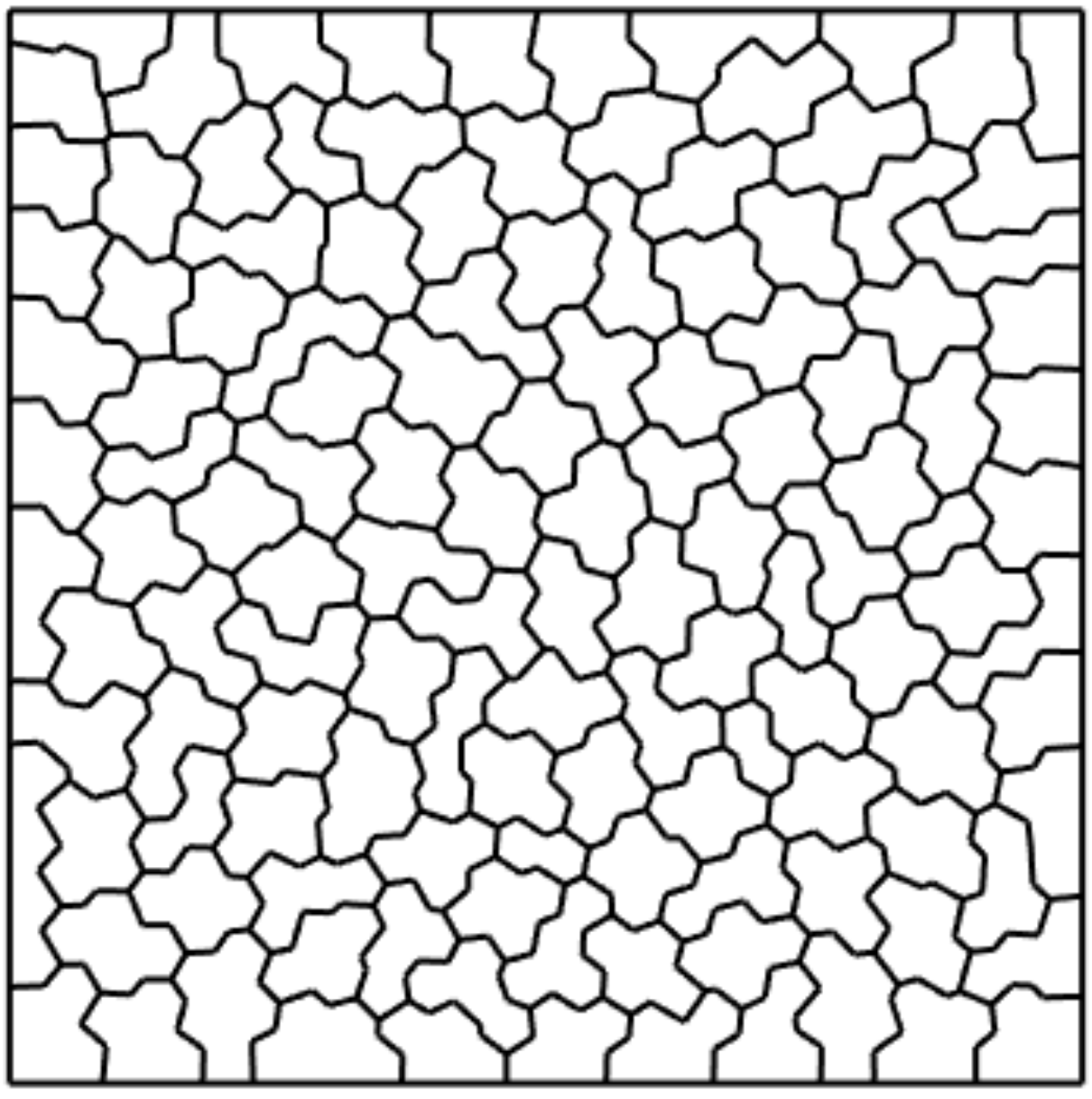}}} & \subfloat{
          \raisebox{-.45\height}{\includegraphics[width=0.2\textwidth]{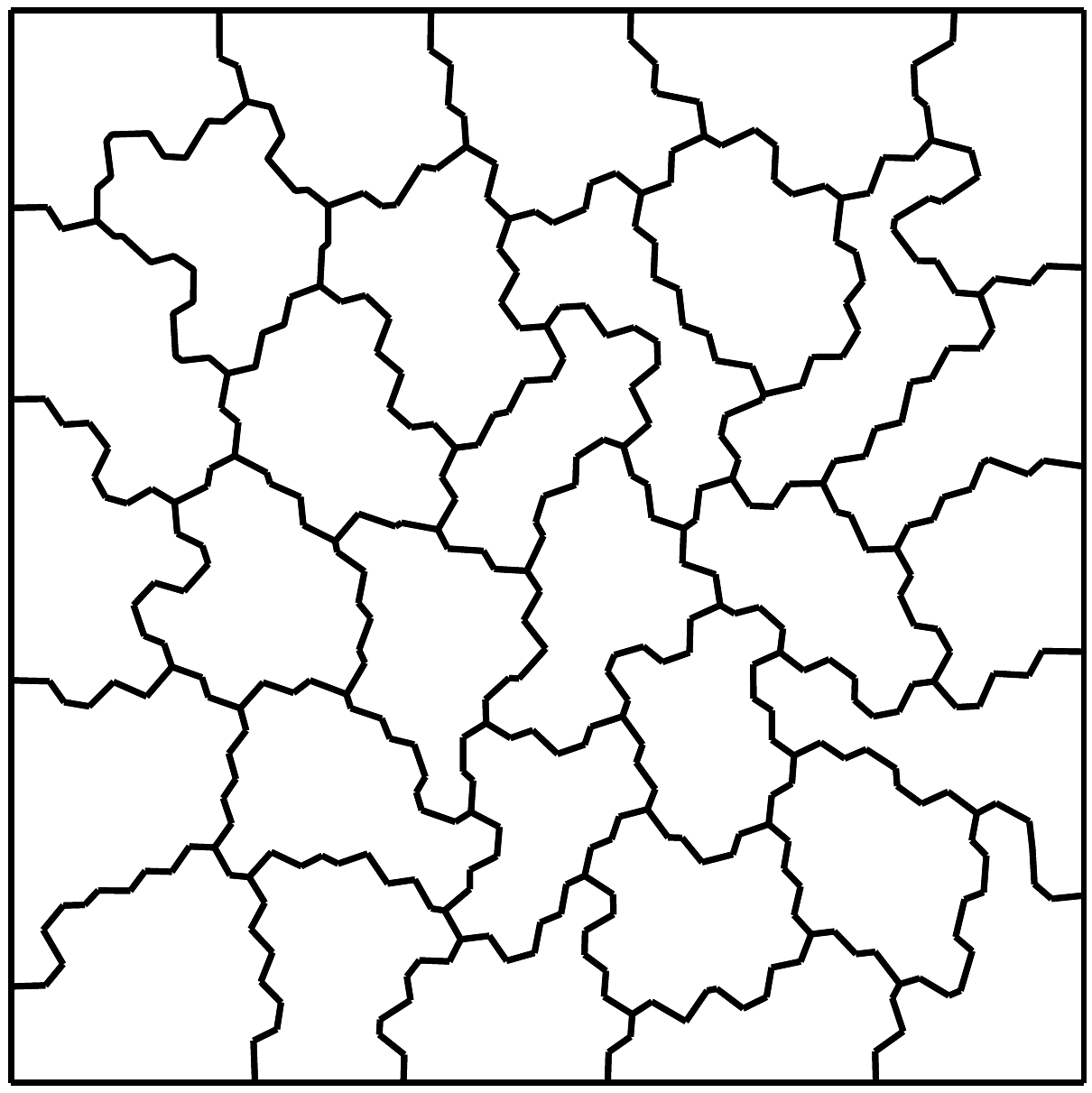}}} & \subfloat{
          \raisebox{-.45\height}{\includegraphics[width=0.2\textwidth]{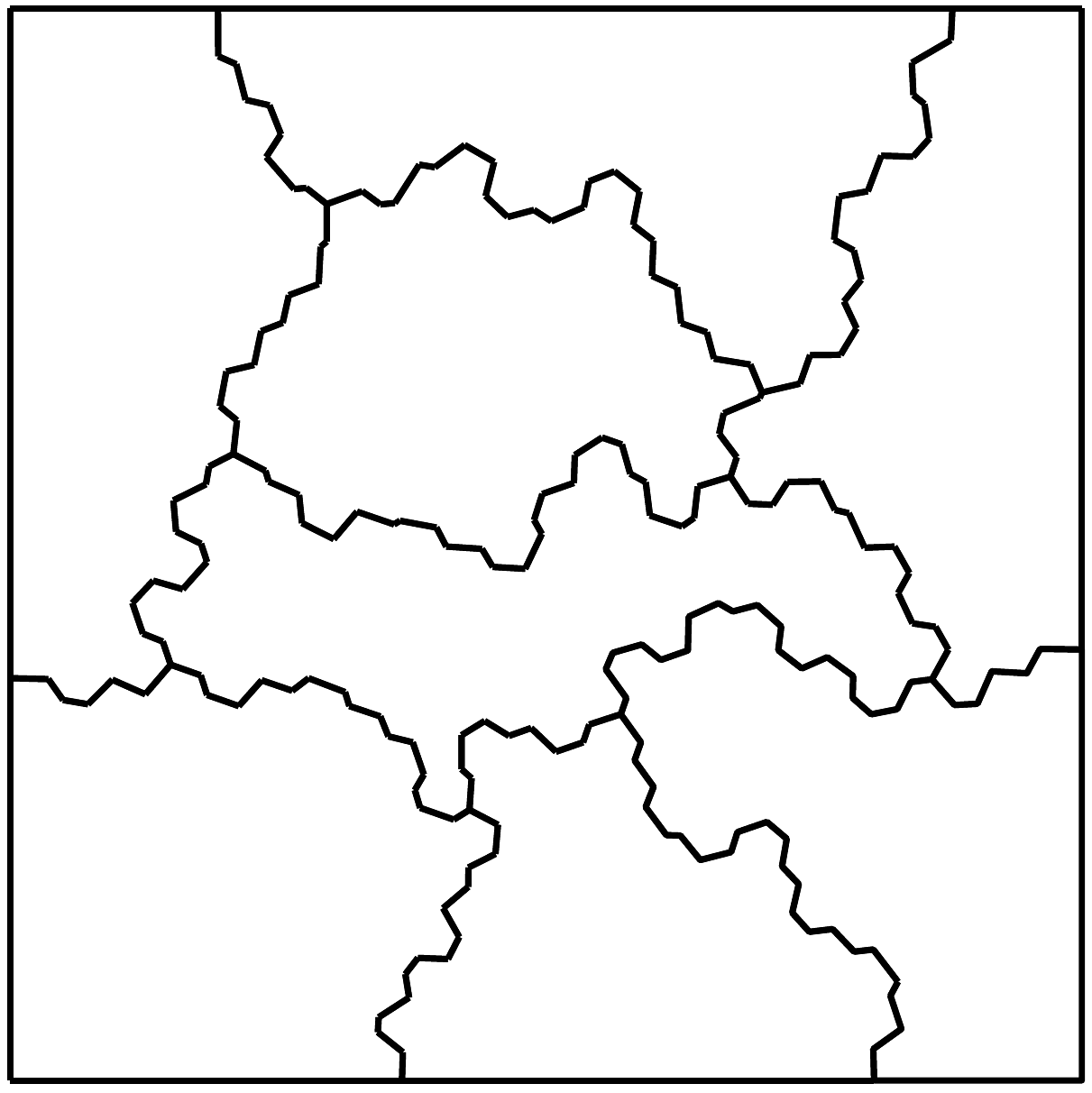}}}\\
\end{tabular}
\caption{Example 2. Example of a sequence of nested polygonal grids.}
\label{fig:PolyGrids}
\end{figure}

In this example, we investigate the performance of the proposed ASPCG algorithm on a set of
Voronoi polygonal fine grids $\mcal[T][h]$, where $\Om=(0,1)^2$ and $\rho=1$. For each grid $\mcal[T][h]$ we construct a sequence of nested polygonal grids $\mcal[T][\hcoarse]$ obtained by successive levels of agglomeration, cf.~\cite{AnHoHuSaVe2017}. 
For each fine Voronoi grid of size $h$ the agglomeration process has been performed in order to ensure that the size of the coarser partitions is approximately $\hcoarse = 2h,\ 4h, \dots$, cf. \Cref{fig:PolyGrids}, for example. In~\Cref{tab:ASPCG_P1_poly,tab:ASPCG_P3_poly} we report the condition number and the iteration counts for the proposed ASPCG algorithm for $p=q=1$ and $p=q=3$, respectively. Here, we clearly observe that the condition number and the iteration counts grow quadratically and linearly, respectively, as $\hfine$ tends to zero for fixed $\hcoarse$. Moreover, if the ratio of $\hfine$ and $\hcoarse$ is kept fixed, then we observe that the condition number and iteration counts are approximately constant; cf. the diagonals and subdiagonals of~\Cref{tab:ASPCG_P1_poly,tab:ASPCG_P3_poly}. This behavior is in agreement with the theoretical bound stated in \Cref{rmrk:Cond}, where $K(P_{ad}) = \mcal[O]( \nicefrac{\hcoarse^2}{h^2} ).$


\bgroup
\def\arraystretch{1.8}

\begin{table}[t!]
\centering
\footnotesize
\begin{tabular}{l||lllll}
& \multicolumn{5}{l}{$\mcal[T][\hfine]\ \to$} \\
\hhline{~-----}
$\downarrow \mcal[T][\hcoarse]$  & $h=8\overline{h}$  & $h=4\overline{h}$  & $h=2\overline{h}$  & $h=\overline{h}$  \\
\hhline{------}
$\hcoarse = 16\overline{h}$  & 20.70 (45)  & 72.31 (86)  & 269.70 (163)  & 818.09 (289)   \\
$\hcoarse = 8\overline{h}$    & -   & 21.89 (46)  & 73.42 (86)  & 261.36 (163)   \\
$\hcoarse = 4\overline{h}$    & -   & -   & 20.91 (46)  & 83.77 (91)  \\
$\hcoarse = 2\overline{h}$    & -   & -   & -   & 23.08 (48) \\
\end{tabular}

\vspace*{0.3cm}

\caption{Example 2. Condition number (and iteration counts): nested polygonal grids, with $p=q=1$. Here, $\overline{h}$ is the diameter of a grid with $N_{\hfine}=4096$ elements.}
\label{tab:ASPCG_P1_poly}
\end{table}

\begin{table}[t!]
\centering
\footnotesize
\begin{tabular}{l||lllll}
& \multicolumn{5}{l}{$\mcal[T][\hfine]\ \to$} \\
\hhline{~-----}
$\downarrow \mcal[T][\hcoarse]$  & $h=8\overline{h}$  & $h=4\overline{h}$  & $h=2\overline{h}$  & $h=\overline{h}$  \\
\hhline{------}
$\hcoarse = 16\overline{h}$  & 88.63 (80)  & 291.77 (145)  & 1137.55 (276)  & 3241.64 (513)   \\
$\hcoarse = 8\overline{h}$    & -   & 102.96 (82)  & 278.15 (140)  & 949.21 (271)   \\
$\hcoarse = 4\overline{h}$    & -   & -   & 90.30 (79)  & 343.19 (148)  \\
$\hcoarse = 2\overline{h}$    & -   & -   & -   & 104.24 (82) \\
\end{tabular}

\vspace*{0.3cm}

\caption{Example 2. Condition number (and iteration counts): nested polygonal grids with, $p=q=3$. Here, $\overline{h}$ is the diameter of a grid with $N_{\hfine}=4096$ elements. }
\label{tab:ASPCG_P3_poly}
\end{table}

\egroup

\subsection{Example 3}\label{sec:TeCa7}
We now consider the performance of the ASPCG algorithm on tetrahedral meshes in three dimensions. To this end, we set $\Om = (0,1)^3$ and $\rho=1$; furthermore, the elements of the coarse mesh are general-shaped polyhedra obtained by successive agglomeration, cf. the previous example. The results for $p=q=1$ and $p=q=3$ are reported in~\Cref{tab:3DTetrahedra_CondAndIter_p1,tab:3DTetrahedra_CondAndIter_p3}, respectively. Here, we have also added a line with the condition number of the operator $A_{\hfine}:V_{\hfine} \times V_{\hfine} \rightarrow V_{\hfine}$ defined by $(A_{\hfine} u_{\hfine}, v_{\hfine})_{L^2(\Om)} = \Aa[\hfine][u_{\hfine}][v_{\hfine}]$ for all $u_{\hfine},v_{\hfine} \in V_{\hfine}$, and, in parentheses, the iteration counts of the Conjugate Gradient method for solving~\eqref{eq:dG} without preconditioning. Analogous behaviour of the condition number and iteration counts to those presented in the previous example are observed. In particular, we observe that the condition number is roughly constant on the diagonals and subdiagonals of the two tables, while, along each row, i.e., when $\mcal[T][\hcoarse]$ is fixed, the expected quadratic growth in $K(P_{ad})$ is observed. Similar considerations are also noted for the iteration counts.

\bgroup
\def\arraystretch{1.5}
{\setlength{\tabcolsep}{0.5em} 
\begin{table}[t!]
\centering
\footnotesize
\begin{tabular}{l || llllll}
& \multicolumn{6}{l}{$N_{\hfine}\ \to$} \\
\hhline{~------}
$\downarrow N_{\hcoarse}$  & 384  & 3072  & 24576  & 196608  & 1572864  & 12582912\\
\hhline{-------}
48  & 107 (85)  & 411 (156)  & 1497  (294)  & 6216 (580)  & 25791 (1089) & 94276  (2012) \\
384  & -   & 136 (95)  & 499 (169)  & 1878 (311)  & 7407  (584) & 28762 (1106)  \\
3072  & -   & -   & 146 (96)  & 480  (165)  & 1904  (306)  & 7861  (578) \\
24576  & -   & -   & -   & 144  (94)  & 491  (164) & 1973  (306) \\
196608  & -   & -   & -   & -   & 144 (94) & 496 (164) \\
1572864 & -   &  -  &  -  &  -  &  -  & 145 (94) \\
\hline
\hline
$K(A_{\hfine})$  & 734 (166)  & 2859 (289)  & 11407 (507)  & 45618 (933)  & 182199 (1649)  & 708509 (3012)\\
\end{tabular}

\vspace*{0.3cm}

\caption{Example 3. Condition number (and iteration counts): tetrahedral fine meshes and agglomerated polyhedral coarse grids, with $p=q=1$.}
\label{tab:3DTetrahedra_CondAndIter_p1}
\end{table}
}


\begin{table}[t!]
\centering
\footnotesize
\begin{tabular}{l || llll}
& \multicolumn{4}{l}{$N_{\hfine}\ \to$} \\
\hhline{~----}
$\downarrow N_{\hcoarse}$  & 384  & 3072  & 24576  & 196608  \\
\hhline{-----}
48  & 607 (174) & 2120 (309) & 6760 (515) &  26674 (924) \\
384  & -   & 655 (179) & 2334 (314)  &  7507 (536)  \\
3072  & -   & -   & 693 (182)  &  2295 (316)  \\
24576  & -   & -   & -   & 697 (182)   \\
\hline
\hline
$K(A_{\hfine})$  & 12247 (679)  & 40675 (1056)  & 154934 (1722)  & 602050 (2991)\\
\end{tabular}

\vspace*{0.3cm}

\caption{Example 3. Condition number (and iteration counts): tetrahedral fine meshes and agglomerated polyhedral coarse grids, with $p=q=3$.}

\label{tab:3DTetrahedra_CondAndIter_p3}
\end{table}
\egroup

\subsection{Example 4}\label{sec:TeCa4}

\begin{figure}[t!]
\centering
\begin{tabular}{cccc}
 $\mcal[T][\hfine]$ & $\hcoarse = 2 h$ & $\hcoarse = 4 h$ & $\hcoarse = 8 h$\\
\subfloat{
	  \raisebox{-.45\height}{\includegraphics[width=0.2\textwidth]{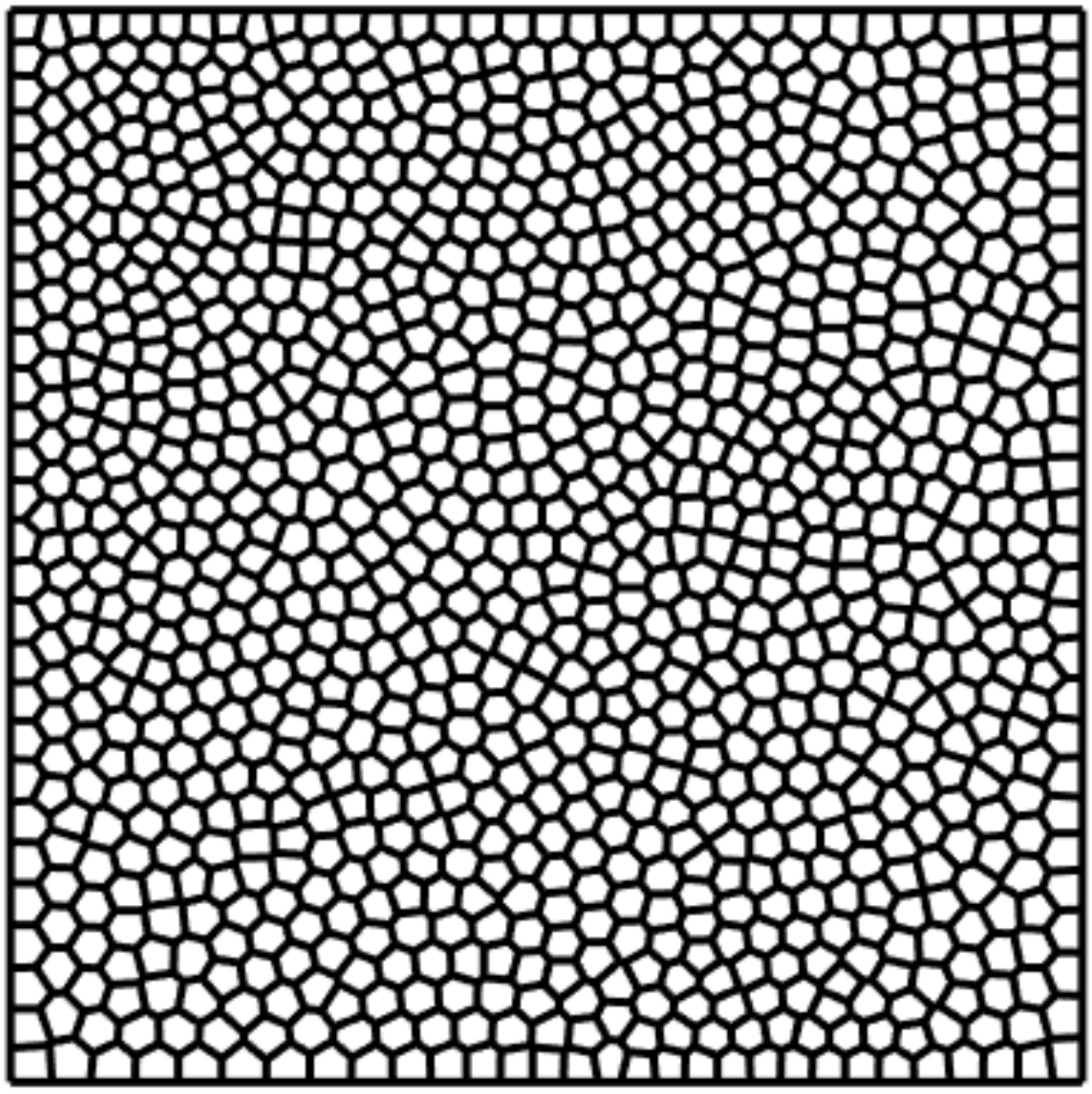}}}  & \subfloat{
          \raisebox{-.45\height}{\includegraphics[width=0.2\textwidth]{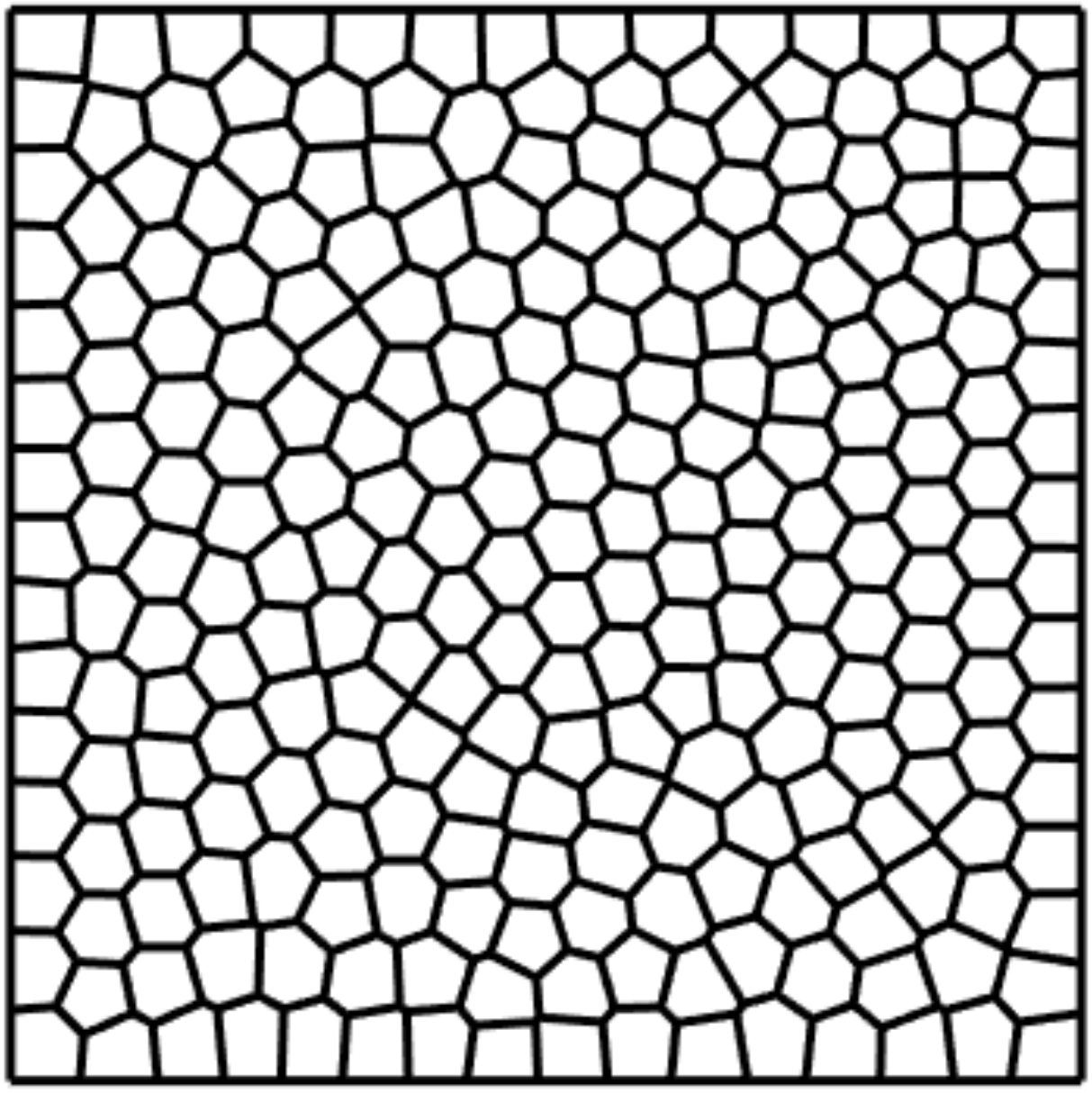}}} & \subfloat{
          \raisebox{-.45\height}{\includegraphics[width=0.2\textwidth]{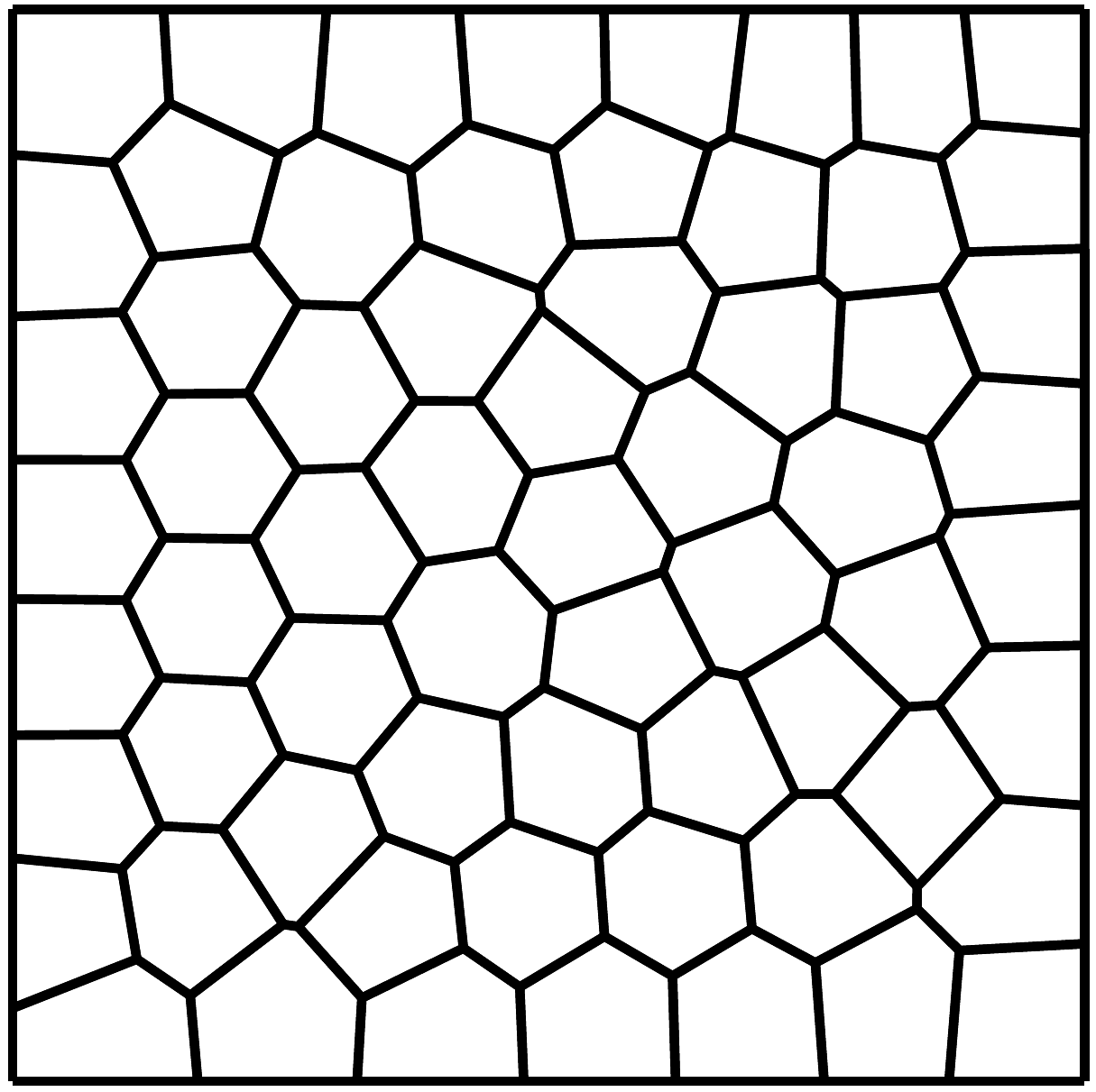}}} & \subfloat{
          \raisebox{-.45\height}{\includegraphics[width=0.2\textwidth]{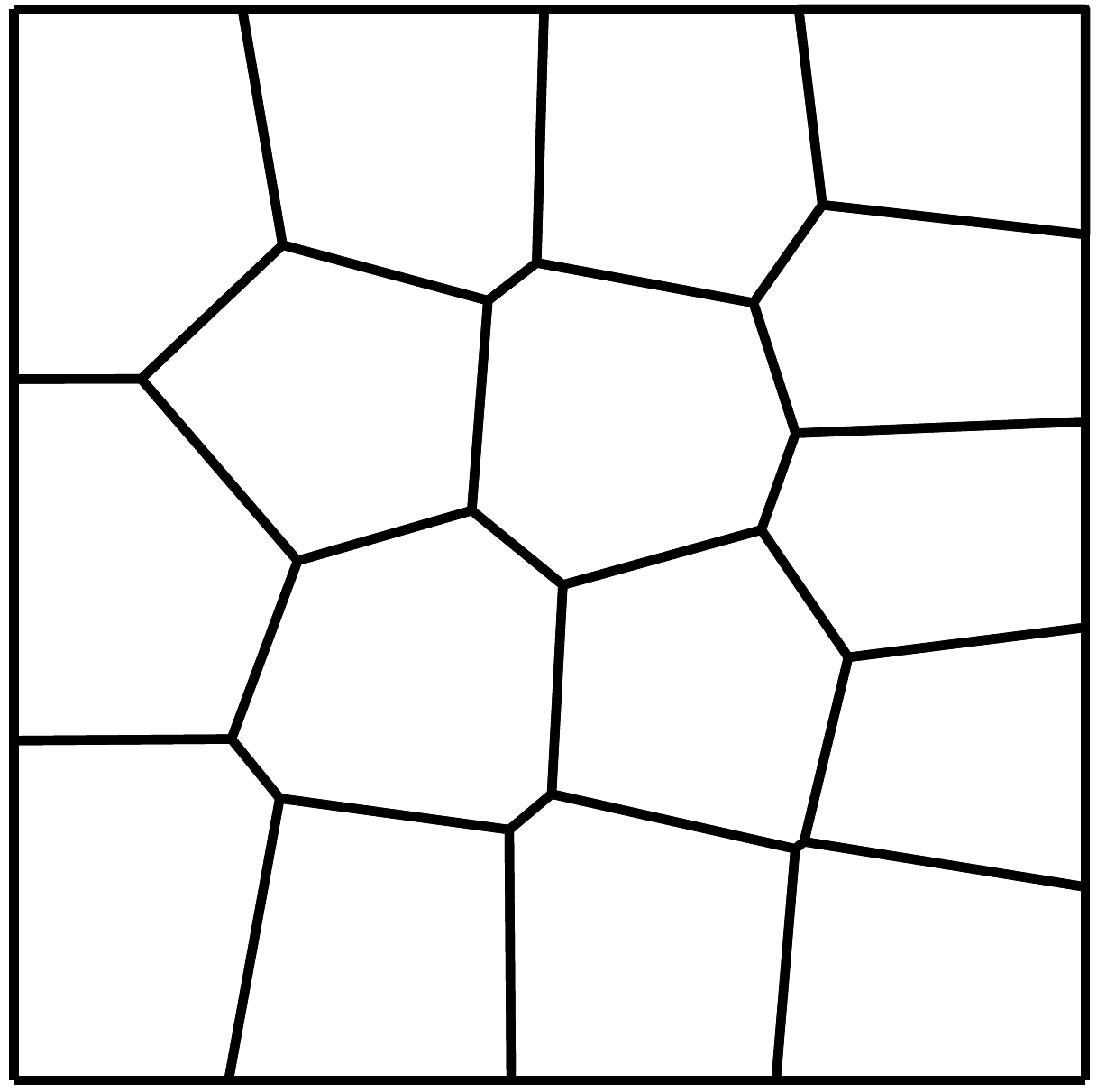}}}\\
\end{tabular}
\caption{Example 4. Sequence of non-nested Voronoi polygonal grids employed.}
\label{fig:PolyGridsNonNested}
\end{figure}

Given the definition of $R_0^\top$, the proposed ASPCG algorithm naturally admits the use of non-nested coarse spaces, i.e., when $V_{\hcoarse} \nsubseteq V_h$. In order to confirm the condition number bound stated in \Cref{rmrk:CondNonNested} when $V_{\hcoarse} \nsubseteq V_h$, we consider a set of independently generated Voronoi polygonal tessellations of $(0,1)^2$ of size $\hfine$ and $\hcoarse>h$, respectively; in this way, $\mcal[T][h]$ and $\mcal[T][\hcoarse]$ are non-nested, cf. \Cref{fig:PolyGridsNonNested}.
The results (for $\rho=1$) shown in~\Cref{tab:ASPCG_Cond_Nest0_hHfine_HHcoarse_P1,tab:ASPCG_Cond_Nest0_hHfine_HHcoarse_P3} for $p=q=1$ and $p=q=3$, respectively, illustrate analogous behavior to the results for the nested case presented in the previous examples; this is in agreement with the condition number bound stated in \Cref{rmrk:CondNonNested}.

\bgroup
\def\arraystretch{1.5}

\begin{table}[t!]
\centering
\footnotesize
\begin{tabular}{l||lllll}
& \multicolumn{5}{l}{$N_h\ \to$} \\
\hhline{~-----}
$\downarrow N_{\hcoarse}$  & 64  & 256  & 1024  & 4096  & 16384  \\
\hhline{------}
16  & 23.29 (38)  & 92.13 (77)  & 387.61 (159)  & 1624.26 (324)  & 6370.86 (657)  \\
64  & -   & 25.91 (39)  & 106.42 (84)  & 411.02 (167)  & 1774.19 (342)  \\
256  & -   & -   & 26.73 (41)  & 100.89 (82)  & 425.97 (169)  \\
1024  & -   & -   & -   & 31.81 (44)  & 118.61 (86)  \\
4096  & -   & -   & -   & -   & 30.56 (43)  \\
\end{tabular}

\vspace*{0.3cm}

\caption{Example 4. Condition number (and iteration counts): non-nested polygonal grids, with $p=q=1$.}
\label{tab:ASPCG_Cond_Nest0_hHfine_HHcoarse_P1}
\end{table}


\begin{table}[t!]
\centering
\footnotesize
\begin{tabular}{l||lllll}
& \multicolumn{5}{l}{$N_h\ \to$} \\
\hhline{~-----}
$\downarrow N_{\hcoarse}$  & 64  & 256  & 1024  & 4096  & 16384  \\
\hhline{------}
16  & 148.36 (83)  & 429.84 (143)  & 1602.53 (275)  & 5405.40 (529)  & 21263.66 (1058)  \\
64  & -   & 142.42 (76)  & 405.44 (135)  & 1525.94 (263)  & 5170.13 (498)  \\
256  & -   & -   & 157.47 (80)  & 452.41 (137)  & 1469.08 (249)  \\
1024  & -   & -   & -   & 147.97 (77)  & 402.98 (124)  \\
4096  & -   & -   & -   & -   & 135.77 (70)  \\
\end{tabular}

\vspace*{0.3cm}

\caption{Example 4. Condition number (and iteration counts): non-nested polygonal grids, with $p=q=3$.}
\label{tab:ASPCG_Cond_Nest0_hHfine_HHcoarse_P3}
\end{table}

\egroup

\subsection{Example 5}\label{sec:TeCa5}
In this final example we investigate the dependence of the condition number on the polynomial degree $p$ in both the nested and non-nested cases with $\rho=1$. For the nested case, we consider a total of four tests: two of them are characterized by quadrilateral fine grids with $N_h = 256$ and $N_h = 1024$ elements, while the two other tests are based on employing the polygonal fine grids depicted in~\Cref{fig:PolyGrids}, where the fine meshes have $N_h = 262$ and $N_h = 516$ polygonal elements. For each test the coarse mesh $\mcal[T][\hcoarse]$ is obtained by agglomeration of $\mcal[T][\hfine]$ in order to guarantee $\hcoarse \eqsim \nicefrac{h}{4}$. Analogous fine meshes are also considered in the non-nested setting; however, here the coarse mesh $\mcal[T][\hcoarse]$ is selected to be a Voronoi grid generated independently of $\mcal[T][h]$, cf. \Cref{fig:NonNestForPdep}. In \Cref{fig:p_dep} we plot the condition number $K(P_{ad})$ on each set of grids as the polynomial degree $p$ is increased. Here, we observe that, in the nested setting, i.e., when $V_{\hcoarse} \subseteq V_h$, for a fixed mesh size $K(P_{ad}) = \mcal[O](p)$ as $p$ increases; however, when  $V_{\hcoarse} \nsubseteq V_h$, then $K(P_{ad}) = \mcal[O](p^2)$ as $p$ increases. This behaviour is in agreement with the condition number bounds stated in \Cref{rmrk:Cond,rmrk:CondNonNested}, respectively.


\begin{figure}[t!]
    \centering
    \subfloat{{\includegraphics[width=4cm]{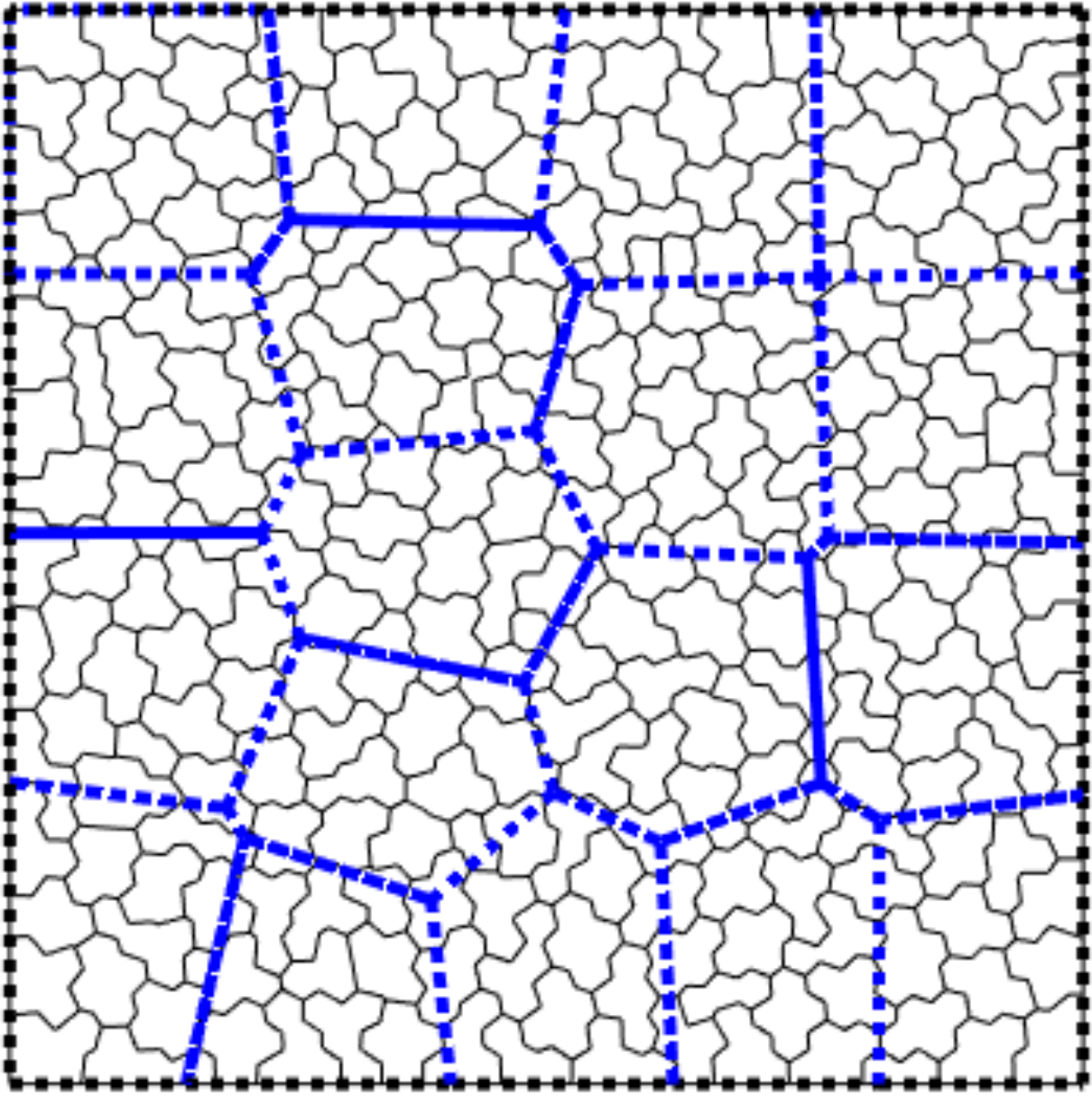} }}
    \
    \subfloat{{\includegraphics[width=4cm]{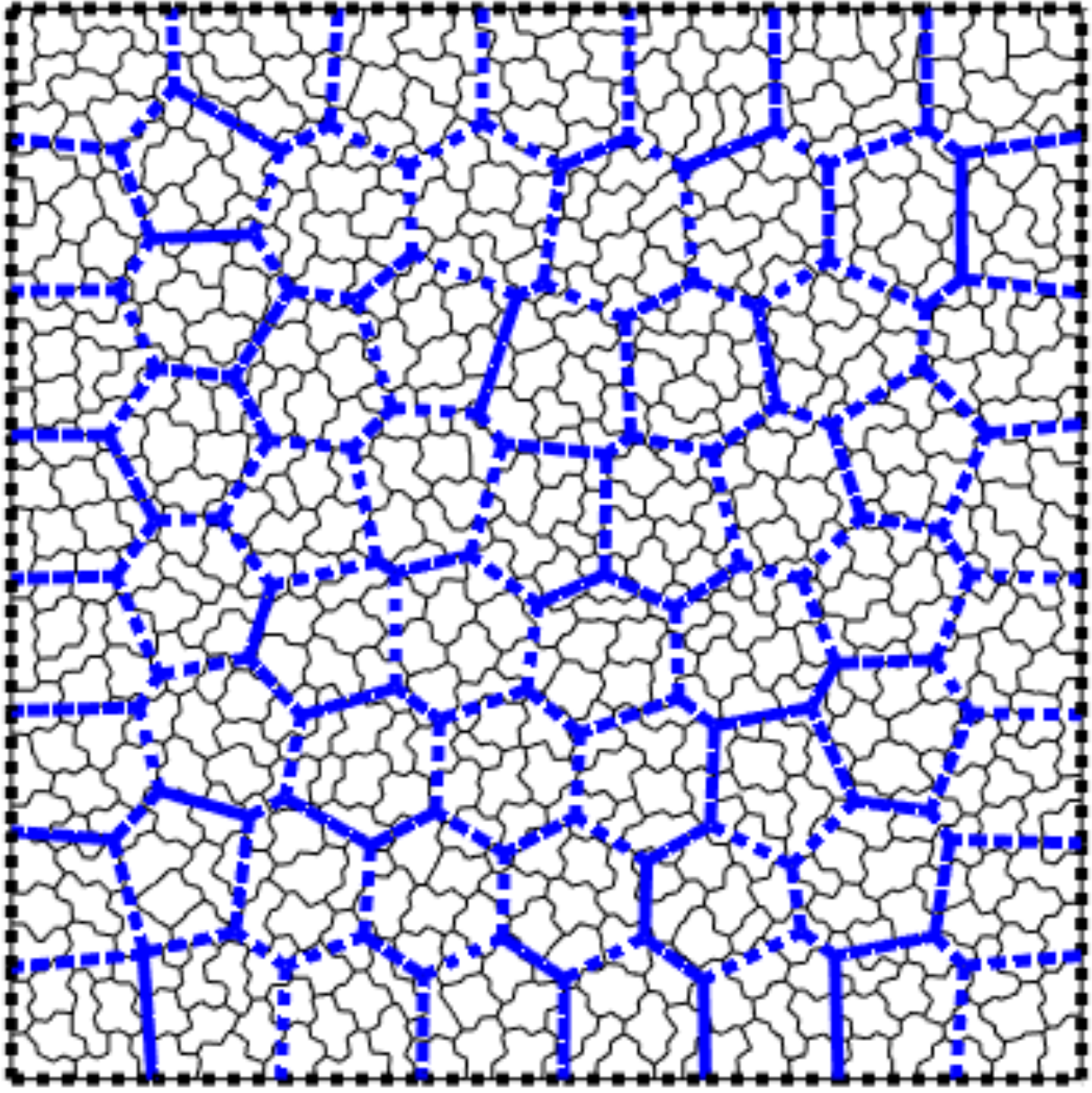} }}
    \caption{Example 5. Pairs of non-nested grids $\mcal[T][h]$ (solid) and $\mcal[T][\hcoarse]$ (dashed), respectively.}
    \label{fig:NonNestForPdep}
\end{figure}


\begin{figure}
\begin{tikzpicture}

\begin{axis}[%
width=6cm,
height=5cm,
scale only axis,
xmode=log,
xmin=0.75,   
xmax=10,
xminorticks=true,
xlabel={$p$},
ymode=log,
ymin=30,
ymax=5000,
yminorticks=true,
ylabel={$K(P_{ad})$},
title={},
legend style={draw=black,fill=white,legend cell align=left,font=\fontsize{6}{5}\selectfont},
legend pos=outer north east
]

\addplot [color=red,solid,line width=2.0pt, mark=square*,mark options={color=red}]
table{1 60.5186
2 209.6212
3 488.3035
4 813.926
5 1317.8976
6 1985.7064
7 2592.3762
8 3238.7525
9 3905.5234
};
\addlegendentry{Non-nested poly. $N_{\hfine}=262$};

\addplot [color=blue,solid,line width=2.0pt, mark=*,mark options={color=blue}]
table{1 34.3035
2 178.7451
3 418.7812
4 682.6494
5 1060.6766
6 1534.0321
7 2101.8662
8 2776.7924
9 3597.7841
};
\addlegendentry{Non-nested poly. $N_{\hfine}=516$};

\addplot [color=orange,solid,line width=2.0pt, mark=triangle*,mark options={color=orange}]
table{1 62.641
2 160.0706
3 241.8657
4 325.4846
5 379.2382
6 456.2464
7 551.1343
8 603.7709
9 661.4906
};
\addlegendentry{Nested poly. $N_{\hfine}=262$};

\addplot [color=black,solid,line width=2.0pt, mark=diamond*,mark options={color=black}]
table{1 64.9171
2 179.6452
3 288.2962
4 385.0758
5 483.7439
6 533.9126
7 625.9311
8 624.4185
9 675.1565
};
\addlegendentry{Nested poly. $N_{\hfine}=516$};

\addplot [color=gray,solid,line width=2.0pt, mark=pentagon*,mark options={color=gray}]
table{1 43.4183
2 115.4801
3 194.2126
4 252.5511
5 336.7532
6 411.7016
7 543.2502
8 496.9999
9 631.2337
};
\addlegendentry{Nested quad. $N_{\hfine}=256$};

\addplot [color=green,solid,line width=1.0pt, mark=asterisk,mark options={color=green}]
table{1 43.6818
2 115.4801
3 194.2126
4 252.5511
5 336.7532
6 411.7016
7 543.2502
8 501.3266
9 633.7908
};
\addlegendentry{Nested quad. $N_{\hfine}=1024$};

\addplot [color=black,solid,line width=2.0pt]
table{1   30
2   60
3   90
4   120
5   150
6   180
7   210
8   240
9   270
};
\addlegendentry{$\mcal[O](p)$};

\addplot [color=black,dashed,line width=2.0pt]
table{1   80
2   320
3   720
4   1280
5   2000
6   2880
7   3920
8   5120
9   6480
};
\addlegendentry{$\mcal[O](p^2)$};

\end{axis}
\end{tikzpicture}%

\caption{Example 5. Condition number $K(P_{ad})$ as function of $p$.}
\label{fig:p_dep}
\end{figure}
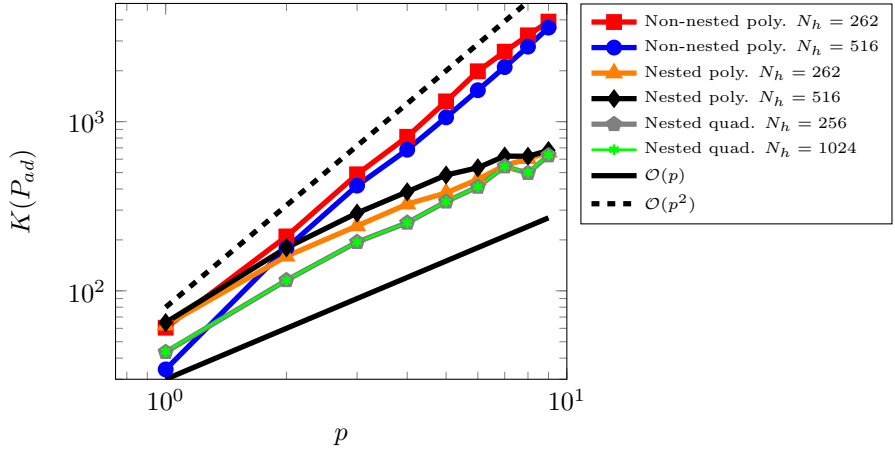

\appendix
\section{Proof of Theorem~\ref{thm_elliptic_regularity}} \label{proof_elliptic_regularity}
In this section we present the proof of Theorem~\ref{thm_elliptic_regularity}. To this end, suppose that $\Omega \subset \mathbb{R}^n$ is a bounded, open, convex domain with boundary $\partial\Omega$. Given $f \in L_0^2(\Omega)$, consider the homogeneous Neumann problem
\begin{equation}\label{eq:ProblemZ2}
- \Delta u  = f \quad\ \text{in } \Omega, \qquad
\nabla u \cdot \mathbf{n} = 0 \quad\ \text{on } \partial \Omega.
\end{equation}
The weak formulation of \eqref{eq:ProblemZ2} is: find $u \in H^1(\Omega)/\mathbb{R} := H^1(\Omega)\cap L^2_0(\Omega)$, such that
\[
a(u,v):=\int_\Omega \nabla u \cdot \nabla v \,\diff \xvec = \ell(v) := \int_{\Omega} f v \,\diff \xvec \qquad \forall\, v \in H^1(\Omega)/\mathbb{R},
\]
with $H^1(\Omega)/\mathbb{R}$ equipped with the norm $\|v\|_{H^1(\Omega)/\mathbb{R}} := \|\nabla v\|_{L^2(\Omega)}$. From \cite[Theorem 3.2.1.3]{Gr1985} it follows that $u\in H^2(\Omega)$; the proof of \eqref{eqn_elliptic_reg} now proceeds with the following steps.

\textbf{Step 1.} [$H^1(\Omega)$ bound] The existence of a unique weak solution to \eqref{eq:ProblemZ2} follows by the Lax--Milgram lemma applied to the bilinear form $a(\cdot,\cdot)$, which is bounded and coercive on $H^1(\Omega)/\mathbb{R}$, and noting that the linear functional $\ell(\cdot)$ is bounded on $H^1(\Omega)/\mathbb{R}$. Indeed,
\begin{equation}
\ell(v) = \int_\Omega f v \, \diff \xvec = \int_\Omega f \big[v - v_\Omega \big]\, \diff \xvec \leq \|f\|_{L^2(\Omega)} \| v - v_\Omega\|_{L^2(\Omega)},
\end{equation}
where $v_\Omega := \nicefrac{1}{|\Omega|} \int_\Omega v \,\diff \xvec$.
By Poincar\'{e}'s inequality,
\[ \| v - v_\Omega\|_{L^2(\Omega)} \leq C(\Omega) \|\nabla v\|_{L^2(\Omega)},\]
where $C(\Omega)$ is a positive constant. Recalling that $\Omega$ is a bounded, open, convex domain, it can be shown that $C(\Omega) \leq \nicefrac{1}{\pi} \, \mbox{diam}(\Omega)$, cf.~\cite{Payne_Weinberger}. Setting $v=u$ in the weak formulation above, we then deduce that
\[ \|\nabla u\|^2_{L^2(\Omega)} = \ell(u) \leq C(\Omega) \|f\|_{L^2(\Omega)} \|\nabla u\|_{L^2(\Omega)}\leq \frac{1}{\pi} \mbox{diam}(\Omega)\|f\|_{L^2(\Omega)}\|\nabla u\|_{L^2(\Omega)}.\]
Hence,
\[ \|\nabla u\|_{L^2(\Omega)} \leq \frac{1}{\pi} \mbox{diam}(\Omega)\|f\|_{L^2(\Omega)}.\]

\smallskip

\textbf{Step 2.} Suppose that $\Omega \subset \Omega_m$, where $\Omega_m$, $m=1,2,\dots$, is a sequence of bounded, open, convex $C^2$ domains, such that $\mbox{\rm dist}(\Omega,\Omega_m) \leq \nicefrac{1}{m}$.
The existence of such a sequence $\{\Omega_m\}_{m=1}^\infty$ follows from Eggleston's lemma, cf. \cite[Lemma 3.2.1.1]{Gr1985}. Hence, in particular $0 \leq \mbox{\rm diam}(\Omega_m) - \mbox{\rm diam}(\Omega) \rightarrow 0$ as $m \rightarrow \infty$.

\smallskip

\textbf{Step 3.} With $\Omega_m$, $m=1,2,\dots$, as in Step 2, we extend $f$ by $0$ from $\Omega$ to $\Omega_m$ for each $m=1,2,\dots$, and define
\[ f_m(\xvec):= \left\{ \begin{array}{cl} f(\xvec) & \mbox{for $\xvec \in \Omega$},\\ 0 & \mbox{for $\xvec \in \Omega_m \setminus \Omega$. }\end{array} \right. \]
Clearly, $f_m \in L^2_0(\Omega_m)$. We consider the following Neumann problem on $\Omega_m$:
\begin{equation}
- \Delta u_m  = f_m \quad\ \text{in } \Omega_m, \qquad
\nabla u_m \cdot \mathbf{n}_m = 0 \quad\ \text{on } \partial \Omega_m,
\end{equation}
where $\mathbf{n}_m$ is the unit outward normal vector to $\partial\Omega_m$. We have that the unique weak solution $u_m \in H^1(\Omega_m)/\mathbb{R}$ of the above Neumann problem satisfies the following elliptic regularity result: $u_m \in H^2(\Omega_m)$, cf. \cite[Theorem 3.2.1.3]{Gr1985}, and
\[\|u_m\|_{H^2(\Omega_m)} \leq \sqrt{6} \,\big(\| f_m \|_{L^2(\Omega_m)} + \| u_m \|_{L^2(\Omega_m)}\big),\qquad m=1,2,\dots,\]
cf. \cite[Theorem 3.1.2.3]{Gr1985}, with $\lambda=1$. Thereby, we deduce that
\begin{equation}\label{eq1}
\|u_m\|_{H^2(\Omega_m)} \leq \sqrt{6} \,\big(\| f\|_{L^2(\Omega)} + \| u_m \|_{L^2(\Omega_m)}\big),\qquad m=1,2,\dots.
\end{equation}
Since $\Omega \subset \Omega_m$, upon application of Poincar\'e's inequality on the right-hand side of \eqref{eq1},
followed by recalling the $H^1(\Omega)$ bound derived in Step 1, we get
\begin{eqnarray}
\|u_m\|_{H^2(\Omega)}
\leq \|u_m\|_{H^2(\Omega_m)}
&\leq& \sqrt{6} \,\big(\| f\|_{L^2(\Omega)} + \frac{1}{\pi}\mbox{\rm diam}(\Omega_m) \| \nabla u_m \|_{L^2(\Omega_m)}\big)\nonumber\\
&\leq& \sqrt{6} \,\big(\| f\|_{L^2(\Omega)} + \frac{1}{\pi^2}[\mbox{\rm diam}(\Omega_m)]^2 \|f_m\|_{L^2(\Omega_m)}\big)\nonumber\\
&=& \sqrt{6} \,\bigg(1 + \frac{1}{\pi^2}[\mbox{\rm diam}(\Omega_m)]^2\bigg) \|f\|_{L^2(\Omega)},
\label{eq2}
\end{eqnarray}
$m=1,2,\dots$. Thanks to Step 2, $\lim_{m \rightarrow \infty} \mbox{\rm diam}(\Omega_m) = \mbox{\rm diam}(\Omega)$. As $(\mbox{\rm diam}(\Omega_m))_{m=1}^\infty$ is a convergent sequence in $\mathbb{R}$ it is automatically a bounded sequence, and therefore, because of \eqref{eq2}, there exists a positive constant $C_0$, independent of $m$, such that
\begin{equation}\label{eq3}
\|u_m\|_{H^2(\Omega)} \leq \|u_m\|_{H^2(\Omega_m)} \leq C_0\qquad \mbox{for all $m=1,2,\dots$}.
\end{equation}
Thus, $(u_m)_{m=1}^\infty$ is a bounded sequence in $H^2(\Omega)$. Hence, there exists an element $u_\infty \in H^2(\Omega)$
and a weakly convergent subsequence $u_{m_k} \rightharpoonup u_\infty$ in $H^2(\Omega)$.
By weak lower semicontinuity of the norm function $\|\cdot\|_{H^2(\Omega)}$, we have that
\begin{equation}\label{eq4}
\|u_\infty\|_{H^2(\Omega)} \leq \liminf_{k \rightarrow \infty} \|u_{m_k}\|_{H^2(\Omega)}.
\end{equation}
Further, thanks to the compact Sobolev embedding $H^2(\Omega) \Subset H^1(\Omega)$ guaranteed by the Rellich--Kon\-drashov theorem, by extracting a further subsequence (not indicated), we have that $u_{m_k} \rightarrow u_\infty$ strongly
in $H^1(\Omega)$. Now,
\begin{align}\label{eq5}
\| \nabla u_{m_k}\|_{L^2(\Omega_{m_k})}^2 = \|\nabla u_{m_k}\|^2_{L^2(\Omega)} + \|\nabla u_{m_k}\|^2_{L^2(\Omega_{m_k} \setminus \Omega)}.
\end{align}
Focusing on the second term on the right-hand side of \eqref{eq5}, by H\"older's inequality with conjugate exponents $\alpha=p/2$ and $\alpha' = \frac{\alpha}{\alpha-1} = p/(p-2)$, $2<p< 2n/(n-2)$ (where $2n/(n-2)$ is the critical Sobolev index), and \eqref{eq3}, we have that
\begin{eqnarray}
\|\nabla u_{m_k}\|_{L^2(\Omega_{m_k} \setminus \Omega)}
&\leq& \|\nabla u_{m_k}\|_{L^p(\Omega_{m_k} \setminus \Omega)} \,|\Omega_{m_k} \setminus \Omega|^{\frac{p-2}{2p}}\nonumber\\
& \leq& \|\nabla u_{m_k}\|_{L^p(\Omega_{m_k})} \,|\Omega_{m_k} \setminus \Omega|^{\frac{p-2}{2p}}\nonumber\\
& \leq& C\big(|\Omega_{m_k}|,\mbox{\rm diam}(\Omega_{m_k}),n,p\big)\, \|u_{m_k}\|_{H^2(\Omega_{m_k})}\, |\Omega_{m_k} \setminus \Omega|^{\frac{p-2}{2p}}\nonumber\\
& \leq& C\big(n,p)\, \|u_{m_k}\|_{H^2(\Omega_{m_k})}\, |\Omega_{m_k} \setminus \Omega|^{\frac{p-2}{2p}}\nonumber\\
& \leq& C\big(n,p)\, C_0 \, |\Omega_{m_k} \setminus \Omega|^{\frac{p-2}{2p}} \rightarrow 0 \qquad \mbox{as $k \rightarrow \infty$}. \label{eq6}
\end{eqnarray}
Passing to the limit $k \rightarrow \infty$ in \eqref{eq5} we therefore have that
\begin{equation}
\lim_{k \rightarrow \infty}\| \nabla u_{m_k}\|_{L^2(\Omega_{m_k})}^2 = \lim_{k \rightarrow \infty}\|\nabla u_{m_k}\|^2_{L^2(\Omega)} = \|\nabla u_{\infty}\|^2_{L^2(\Omega)},
\end{equation}
where the last equality follows from the strong convergence $u_{m_k} \rightarrow u_\infty$ in $H^1(\Omega)$.

Recalling from \eqref{eq2} that
\begin{align}\label{eq7}
\|u_{m_k}\|_{H^2(\Omega)} \leq \sqrt{6} \,\big(\| f\|_{L^2(\Omega)} + \frac{1}{\pi}\mbox{\rm diam}(\Omega_{m_k}) \| \nabla u_{m_k} \|_{L^2(\Omega_{m_k})}\big),
\end{align}
passage to the limit $k \rightarrow \infty$ in inequality \eqref{eq7} using \eqref{eq4}, \eqref{eq6}, together with $\lim_{m\rightarrow \infty}\mbox{\rm diam}(\Omega_m) = \mbox{\rm diam}(\Omega)$, yields that
\begin{align}\label{eq8}
\|u_\infty\|_{H^2(\Omega)} \leq \sqrt{6} \,\big(\| f\|_{L^2(\Omega)} + \frac{1}{\pi}\mbox{\rm diam}(\Omega) \| \nabla u_\infty \|_{L^2(\Omega)}\big).
\end{align}

\smallskip

\textbf{Step 4.} [Identification of $u_\infty$] It remains to show that $u_\infty = u$, the weak solution of the original Neumann problem on $\Omega$. To this end, we consider the weak formulation of the Neumann problem satisfied by $u_m$:
\[ \int_{\Omega_m} \nabla u_m \cdot \nabla v \, \diff \xvec = \int_{\Omega_m} f_m v \, \diff \xvec \qquad \forall\, v \in H^1(\Omega_m)/\mathbb{R}.\]
Thanks to the definition of $f_m$, this weak formulation is equivalent to
\[ \int_{\Omega_m} \nabla u_m \cdot \nabla v \, \diff \xvec = \int_{\Omega} f v \, \diff \xvec \qquad \forall\, v \in H^1(\Omega_m)/\mathbb{R},\]
and therefore
\[ \int_{\Omega}\nabla u_m \cdot \nabla v \, \diff \xvec
+ \int_{\Omega_m \setminus \Omega} \nabla u_m \cdot \nabla v \, \diff \xvec =
\int_{\Omega} f v \, \diff \xvec \qquad \forall\, v \in H^1(\Omega_m)/\mathbb{R}, \]
whereby
\[ \int_{\Omega} \nabla u_{m_k} \cdot \nabla v \, \diff \xvec
+ \int_{\Omega_{m_k} \setminus \Omega} \nabla u_{m_k} \cdot \nabla v \, \diff \xvec = \int_\Omega f v \, \diff \xvec \qquad \forall\, v \in H^1(\Omega_{m_k})/\mathbb{R}. \]
Equivalently, because $\int_\Omega f(\xvec) \, \diff \xvec = 0$,
\[ \int_\Omega \nabla u_{m_k} \cdot \nabla v \, \diff \xvec
+ \int_{\Omega_{m_k} \setminus \Omega} \nabla u_{m_k} \cdot \nabla v \, \diff \xvec = \int_\Omega f v \, \diff \xvec\qquad \forall\, v \in H^1(\Omega_{m_k}). \]
Consider a fixed bounded domain $\Omega_0 \subset \mathbb{R}^n$ such that $\Omega_0 \Supset \Omega_m \supset \Omega$ for all
$m=1,2,\dots$. Then,
\[ \int_\Omega \nabla u_{m_k} \cdot \nabla v \, \diff \xvec
+ \int_{\Omega_{m_k} \setminus \Omega} \nabla u_{m_k} \cdot \nabla v \, \diff \xvec = \int_\Omega f v \, \diff \xvec \qquad \forall\, v \in H^1(\Omega_0). \]
By noting \eqref{eq6}, the strong convergence $u_{m_k} \rightarrow u_\infty$ in $H^1(\Omega)$ as $k \rightarrow \infty$, we have that
\[ \int_\Omega \nabla u_{\infty} \cdot \nabla v \, \diff \xvec = \int_\Omega f v \, \diff \xvec \qquad \forall\, v \in H^1(\Omega_0), \]
hence also
\[ \int_\Omega \nabla u_{\infty} \cdot \nabla v \, \diff \xvec = \int_\Omega f v \, \diff \xvec \qquad \forall\, v \in H^1(\Omega), \]
since any element of $v \in H^1(\Omega_0)$ can be viewed as the extension of a $v \in H^1(\Omega)$ to the superset $\Omega_0$.
Therefore, again since $\int_\Omega f(x)\, \mathrm{d}x = 0$, also
\[ \int_\Omega \nabla u_{\infty} \cdot \nabla v \, \diff \xvec = \int_\Omega f v \, \diff \xvec \qquad \forall\, v \in H^1(\Omega)/\mathbb{R}. \]
Thus we have shown that $u_\infty$ coincides with the unique weak solution $u$ of the homogeneous Neumann problem posed on $\Omega$.
Returning with this information to \eqref{eq8}, we have that the weak solution of the homogeneous Neumann problem on the
bounded, open, convex (and therefore Lipschitz) domain $\Omega$ satisfies
\begin{align}\label{eq9}
\|u\|_{H^2(\Omega)} &\leq \sqrt{6} \,\big(\| f\|_{L^2(\Omega)} + \frac{1}{\pi}\mbox{\rm diam}(\Omega) \| \nabla u \|_{L^2(\Omega)}\big),
\end{align}
as required.

\section{Condition number estimates for non-nested grids}\label{app:NonNestProof}
In this Appendix we provide a bound on the condition number of the additive Schwarz operator $P_{ad}$ introduced in~\Cref{sec:ASPCG} when the fine and coarse grids $\mcal[T][\hfine]$ and $\mcal[T][\hcoarse]$, respectively, are non-nested. For the sake of simplicity, here we assume that $\rho = 1$ on $\Om$ and consider the massively parallel case, i.e., when $\mcal[T][\hlocal] = \mcal[T][\hfine]$. For the purposes of the proceeding analysis, we also assume that $\Omega$ is convex; moreover, we make the following additional assumption on $\mcal[T][\hfine]$.
\begin{assumption} \label{ass3}(Coverability) For every polytopic element $\elem \in \mcal[T][h]$, there exists a set of $m_{\elem}$ overlapping shape-regular simplices $\mcal[K]_i,\ i=1,\dots,m_{\elem}$, such that
\begin{equation}
\mbox{\rm dist}(\elem,\partial \mcal[K]_i) \lesssim \nicefrac{{\rm diam}(\mcal[K]_i)}{p^2},\quad \text{and}\quad |\mcal[K]_i| \gtrsim | \elem |,
\end{equation}
for all $i=1,\dots,m_{\elem}$, see~\cite[Chapter 3]{CaDoGeHo2017}.
\end{assumption}
Given that Assumption~\ref{ass3} holds, we state the following inverse inequality, cf. \cite{AnHoHuSaVe2017}.
\begin{lemma}
\label{lem:inverse}
Suppose that $v_{\hfine} \in V_{\hfine}$; then, the following bound holds:
\begin{equation}
\normL[\nabla v_h][2][\elem][2]\lesssim  p^4 h_\elem^{-2}\normL[v_h][2][\elem][2] \qquad \forall\, \elem \in \mcal[T][h].
\end{equation}
\end{lemma}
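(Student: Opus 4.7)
The plan is to reduce the inverse inequality on the polytopic element $\kappa$ to the classical $hp$-inverse inequality on shape-regular simplices, exploiting the coverability condition (Assumption~\ref{ass3}). Fix $\kappa \in \mcal[T][h]$ and set $q := v_h|_\kappa \in \mcal[P][p](\kappa)$. Since $q$ is a polynomial, it admits a canonical extension to a polynomial $\tilde q \in \mcal[P][p](\mathbb{R}^d)$. Let $\mcal[K]_1,\ldots,\mcal[K]_{m_\kappa}$ denote the covering simplices provided by Assumption~\ref{ass3}; from shape-regularity of each $\mcal[K]_i$ together with $|\mcal[K]_i| \gtrsim |\kappa|$ one obtains $\mbox{diam}(\mcal[K]_i) \eqsim h_\kappa$, and from the same two conditions (plus overlap control) one also deduces $m_\kappa \lesssim 1$.

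First, I would apply the classical $hp$-inverse inequality on each shape-regular simplex $\mcal[K]_i$:
\[
\|\nabla \tilde q\|_{L^2(\mcal[K]_i)}^2 \lesssim \frac{p^4}{\mbox{diam}(\mcal[K]_i)^2}\,\|\tilde q\|_{L^2(\mcal[K]_i)}^2 \lesssim \frac{p^4}{h_\kappa^2}\,\|\tilde q\|_{L^2(\mcal[K]_i)}^2.
\]
Since the $\mcal[K]_i$ cover $\kappa$ (so that $\|\nabla q\|_{L^2(\kappa)}^2 \le \sum_i \|\nabla \tilde q\|_{L^2(\mcal[K]_i)}^2$) and $m_\kappa \lesssim 1$, this handles the left-hand side. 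The required bound then follows provided one can establish the reverse polynomial stability
\[
\|\tilde q\|_{L^2(\mcal[K]_i)} \lesssim \|q\|_{L^2(\kappa)}
\]
with a constant independent of $p$.

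This last estimate is the main obstacle, and it is precisely where the quantitative coverability $\mbox{dist}(\kappa,\partial\mcal[K]_i) \lesssim \mbox{diam}(\mcal[K]_i)/p^2$ in Assumption~\ref{ass3} enters: the $p^{-2}$ scaling of the permitted geometric mismatch is exactly tuned to the Markov/Bernstein inequality for polynomials of degree $p$ on a shape-regular simplex, which says that a polynomial of degree $p$ cannot grow by more than a bounded multiplicative factor between two subsets of a simplex whose boundaries differ by $O(\mbox{diam}/p^2)$. Applying this polynomial stability lemma (see, e.g., the reference cited after Assumption~\ref{ass3}) to $\tilde q$ on $\mcal[K]_i$ versus $\kappa \cap \mcal[K]_i$ yields the required reverse bound with a $p$-independent constant.

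Combining the two displays and summing over $i=1,\ldots,m_\kappa$ then gives $\|\nabla v_h\|_{L^2(\kappa)}^2 \lesssim p^4 h_\kappa^{-2} \|v_h\|_{L^2(\kappa)}^2$, as claimed. In practice, since this inverse estimate is already established in~\cite{AnHoHuSaVe2017,CaDoGeHo2017} under exactly the hypotheses of Assumption~\ref{ass3}, the proof in the present paper would amount to invoking those results; the sketch above merely explains why Assumption~\ref{ass3} is the correct hypothesis for preserving the classical $p^4 h^{-2}$ scaling on polytopes.
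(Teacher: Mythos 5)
The paper's own proof consists of a single sentence deferring to the cited reference, so your decision to invoke \cite{CaDoGeHo2017} is precisely what the paper does, and your sketch is an (optional) elaboration of what that reference actually proves. The broad strategy of the sketch is indeed the one used there: extend $v_h|_\kappa$ polynomially, apply the classical $hp$-inverse estimate on each shape-regular simplex $\mcal[K]_i$, and then pass back to $\kappa$ using a polynomial non-growth result whose geometric hypothesis is tuned to the Markov/Bernstein scale $O(\mathrm{diam}(\mcal[K]_i)/p^2)$; you correctly identify that the $p^{-2}$ distance tolerance and the measure comparability $|\mcal[K]_i|\gtrsim|\kappa|$ are exactly what make the reverse polynomial stability hold with a $p$-independent constant.

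Two intermediate claims in your sketch, however, do not actually follow from Assumption~\ref{ass3} as stated and should be flagged as part of the coverability hypothesis rather than deductions from it. First, $\mathrm{diam}(\mcal[K]_i)\eqsim h_\kappa$: shape-regularity plus $|\mcal[K]_i|\gtrsim|\kappa|$ only yields $\mathrm{diam}(\mcal[K]_i)\gtrsim|\kappa|^{1/d}$, which need not be comparable to $h_\kappa=\mathrm{diam}(\kappa)$ when $\kappa$ is degenerate, and no upper bound on $|\mcal[K]_i|$ is stated at all. Second, $m_\kappa\lesssim 1$ is asserted but does not follow from the two displayed conditions without an explicit bounded-overlap clause. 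In the reference these properties are built into the full $p$-coverability definition, so nothing is genuinely wrong, but as written the sketch over-attributes them to Assumption~\ref{ass3}; it would be cleaner either to cite them as part of the hypothesis or to restrict the explanatory remarks to the reverse-stability mechanism, which is the genuinely non-trivial ingredient and which you describe correctly.
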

\begin{proof}
We refer to~\cite{CaDoGeHo2017} for the proof of this result.
\end{proof}

We first provide a counterpart of~\Cref{lem:vh_R0TvH}, which holds in the non-nested case and allows us to prove the validity of~\Cref{ass:StaDec} also for non-nested spaces $V_{\hfine}$ and $V_{\hcoarse}$. The key aspect of our analysis is the construction of the conforming approximant introduced in~\Cref{th:Hbound}. In particular, we recall the following result.

\begin{theorem}\label{thm:TildeVhGlobalBounds}
Let $\mathcal{G}_h(v_h) := \nabla_h v_h + \mcal[R][1](\jump{v_h})$ be the discrete gradient operator of $v_h \in V_h$ defined as in~\Cref{def:H_of_v_h} and let $\widetilde{v}_{h} \in H^1_0(\Om)$ be such that
\begin{equation}
\int_{\Om} \nabla \widetilde{v}_{h} \cdot \nabla w\ \diff \xvec = \int_{\Om} \mathcal{G}_h(v_h) \cdot \nabla w\ \diff \xvec \quad \forall\, w \in H^1_0(\Om).
\end{equation}
Then, the following approximation and stability results hold:
\begin{align}
 \normL[v_h - \widetilde{v}_{h}][2][\Om] &\lesssim \frac{h}{p} \normL[\sigma_{h,1}^{\nicefrac{1}{2}} \jump{v_h}][2][\mcal[F][h]], \qquad | \widetilde{v}_{h} |_{H^1(\Om)} \lesssim \normDG[v_h][h,1]. \label{eq:HboundH1_2}
\end{align}
\end{theorem}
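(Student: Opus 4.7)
The strategy is to mirror the proof of Theorem~\ref{th:Hbound} globally, replacing the local Neumann duality on $\mcal[D][j]$ by a Dirichlet duality on the whole of $\Omega$. Accordingly, I would let $z\in H^2(\Omega)\cap H^1_0(\Omega)$ solve $-\Delta z = v_h - \widetilde{v}_h$ in $\Omega$ with $z=0$ on $\partial\Omega$. Since $\Omega$ is convex (per the assumption at the start of Appendix~\ref{app:NonNestProof}), standard elliptic regularity delivers $\|z\|_{H^2(\Omega)}\lesssim \|v_h-\widetilde{v}_h\|_{L^2(\Omega)}$; the proof is parallel to that of Theorem~\ref{thm_elliptic_regularity}, but with Dirichlet data, so the Poincar\'e-type correction for the mean value is not required.

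Beginning from $\|v_h-\widetilde{v}_h\|_{L^2(\Omega)}^2 = -\int_\Omega (v_h-\widetilde{v}_h)\Delta z\,\diff\xvec$ and integrating by parts element-wise on $\mcal[T][\hfine]$, the fact that $\widetilde{v}_h\in H^1_0(\Omega)$ (so $\jump{\widetilde{v}_h}=\mathbf{0}$ on $\mcal[F][h]^I$ and $\widetilde{v}_h|_{\partial\Omega}=0$), together with the single-valuedness of $\nabla z$ across interior faces, yields
\begin{equation*}
\|v_h-\widetilde{v}_h\|_{L^2(\Omega)}^2 = \int_\Omega (\nabla_h v_h - \nabla \widetilde{v}_h)\cdot\nabla z\,\diff\xvec - \int_{\mcal[F][h]}\jump{v_h}\cdot\average{\nabla z}_{\nicefrac{1}{2}}\,\diff s,
\end{equation*}
where interior and boundary face contributions are combined via the identity $\jump{v_h-\widetilde{v}_h}=\jump{v_h}$ valid on all of $\mcal[F][h]$. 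Testing the variational definition of $\widetilde{v}_h$ against $z\in H^1_0(\Omega)$ converts the first term into $-\int_\Omega \mcal[R][1](\jump{v_h})\cdot\nabla z\,\diff\xvec$; inserting $\Pi_h z$ via the lifting identity~\eqref{eq:lifting_R_2} then rewrites the right-hand side as two pairings against the interpolation error $\nabla z - \nabla_h\Pi_h z$ (one in $L^2(\Omega)$, one on $\mcal[F][h]$). From here the analysis is that of Theorem~\ref{th:Hbound}: Cauchy--Schwarz, the lifting bound $\|\mcal[R][1](\jump{v_h})\|_{L^2(\Omega)}\lesssim\|\sigma_{h,1}^{\nicefrac{1}{2}}\jump{v_h}\|_{L^2(\mcal[F][h])}$ (obtained verbatim from the definition of $\mcal[R][1]$ and Lemma~\ref{lem:inversepoly}), and the $hp$-approximation estimates of Lemma~\ref{lem:interpDG_loc} and Remark~\ref{rmrk:GloApp} applied globally to $z\in H^2(\Omega)$ combine to give $\|v_h-\widetilde{v}_h\|_{L^2(\Omega)}^2 \lesssim \nicefrac{h}{p}\,\|\sigma_{h,1}^{\nicefrac{1}{2}}\jump{v_h}\|_{L^2(\mcal[F][h])}\,\|z\|_{H^2(\Omega)}$. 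The Dirichlet regularity bound cancels one factor of $\|v_h-\widetilde{v}_h\|_{L^2(\Omega)}$ to produce the first assertion.

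For the $H^1$-stability I would test the variational definition of $\widetilde{v}_h$ against $w=\widetilde{v}_h\in H^1_0(\Omega)$ itself and apply Cauchy--Schwarz to obtain $|\widetilde{v}_h|_{H^1(\Omega)}\leq \|\mcal[G][h](v_h)\|_{L^2(\Omega)}$. The triangle inequality then gives $\|\mcal[G][h](v_h)\|_{L^2(\Omega)}^2\lesssim \|\nabla_h v_h\|_{L^2(\Omega)}^2 + \|\mcal[R][1](\jump{v_h})\|_{L^2(\Omega)}^2$, and the same lifting estimate identifies the right-hand side with $\normDG[v_h][h,1][2]$.

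The main technical subtlety is the bookkeeping on $\mcal[F][h]^B$: unlike in Theorem~\ref{th:Hbound}, where the local Neumann data kept $\mcal[F][h,j]^B$ boundary terms out of the estimate, here the Dirichlet trace $\widetilde{v}_h|_{\partial\Omega}=0$ is used to identify $\jump{v_h-\widetilde{v}_h}$ with $\jump{v_h}$ on boundary faces, so that these contributions must be retained consistently throughout the integration by parts and in the final jump semi-norm on the right-hand side.
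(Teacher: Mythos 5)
Your reconstruction is correct, and it is the right one: the paper itself does not provide a proof of Theorem~\ref{thm:TildeVhGlobalBounds}, deferring instead to~\cite{AnHoSm2016}, while noting in the accompanying remark that the result is the global counterpart of Theorem~\ref{th:Hbound}. Your argument simply transposes that local proof from the Neumann problem on a convex coarse element $\mcal[D][j]$ to the Dirichlet problem on the convex domain $\Omega$, and every step you take — the duality argument, the element-wise integration by parts, the insertion of $z_h=\Pi_h z$ through the lifting identity~\eqref{eq:lifting_R_2}, the lifting bound and $hp$-approximation estimates, and the $H^1$-stability via testing with $w=\widetilde{v}_h$ — is the same step used in the proof of Theorem~\ref{th:Hbound}, executed globally. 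Two adaptations you make are exactly the correct ones and are worth flagging: first, because $z$ satisfies a Dirichlet condition rather than Neumann, the boundary face contributions do not vanish via $\nabla z\cdot\mathbf{n}|_{\partial\Omega}=0$ but instead via $\widetilde{v}_h|_{\partial\Omega}=0$, so that $\jump{v_h-\widetilde{v}_h}=\jump{v_h}$ on $\mcal[F][h]^B$ and the full face set $\mcal[F][h]$ appears in the final jump norm; second, the $H^2$ regularity bound $\|z\|_{H^2(\Omega)}\lesssim\|v_h-\widetilde{v}_h\|_{L^2(\Omega)}$ for the Dirichlet problem on a convex domain is indeed simpler than Theorem~\ref{thm_elliptic_regularity} since no mean-value normalization is needed, and can be cited directly from~\cite{Gr1985}. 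The level of detail at which you invoke Lemma~\ref{lem:interpDG_loc} and Remark~\ref{rmrk:GloApp} for the interpolation error terms matches the level of detail in the paper's own proof of Theorem~\ref{th:Hbound}, so no further elaboration is needed there.
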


\begin{remark}
\Cref{thm:TildeVhGlobalBounds} provides global bounds for $v_{\hfine} \in V_{\hfine}$ in the $L^2$-norm. This result is a particular case of~\Cref{th:Hbound}, where local bounds on each coarse element $\mcal[D][j] \in \mcal[T][\hcoarse]$ are provided. We refer to~\cite{AnHoSm2016} for the proof of~\Cref{thm:TildeVhGlobalBounds}.
\end{remark}

On the basis of the previous result,~\Cref{lem:vh_R0TvH} can be generalized to non-nested spaces as follows.
\begin{lemma}\label{lem:vh_R0TvH_2}
For any $v_h \in V_h$ there exists a coarse function $v_{\hcoarse} \in V_{\hcoarse}$ such that
\begin{align}
& \| v_h - R_0^\top v_{\hcoarse} \|_{L^2(\Om)} \lesssim \frac{\hcoarse}{q} \normDG[v_h][h,1], \label{eq:L2_2}\\
& \normDG[v_h - R_0^\top v_{\hcoarse}][h,1] \lesssim \frac{p^2}{q} \frac{\hcoarse}{\hfine} \normDG[v_h][h,1]. \label{eq:H1_2}
\end{align}
\end{lemma}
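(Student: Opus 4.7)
The plan is to mimic the construction in the proof of Lemma~\ref{lem:vh_R0TvH} but, since the meshes are non-nested, to build a single \emph{global} conforming approximant rather than local ones on each coarse element. Given $v_\hfine\in V_\hfine$, I would first take the $H^1_0(\Om)$-conforming lift $\widetilde v_h \in H^1_0(\Om)$ provided by Theorem~\ref{thm:TildeVhGlobalBounds}, and then define the candidate coarse function by $v_\hcoarse := \Pi_\hcoarse \widetilde v_h\in V_\hcoarse$, where $\Pi_\hcoarse$ is the global $hp$-approximation operator of Lemma~\ref{lem:interpDG_loc} and Remark~\ref{rmrk:GloApp} acting on the coarse mesh $\mcal[T][\hcoarse]$ (which is assumed to satisfy Assumptions~\ref{ass1}--\ref{ass2}). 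Using Theorem~\ref{thm:TildeVhGlobalBounds} together with Lemma~\ref{lem:interpDG_loc} with $k=1$, $l=0$, the triangle inequality would then give
\[
\| v_\hfine - v_\hcoarse \|_{L^2(\Om)}\lesssim \tfrac{\hfine}{p}\,\normL[\sigma_{h,1}^{\nicefrac{1}{2}}\jump{v_\hfine}][2][\mcal[F][\hfine]] + \tfrac{\hcoarse}{q}\,|\widetilde v_h|_{H^1(\Om)} \lesssim \tfrac{\hcoarse}{q}\,\normDG[v_\hfine][h,1],
\]
since $\hfine/p\le \hcoarse/q$ and $|\widetilde v_h|_{H^1(\Om)}\lesssim \normDG[v_\hfine][h,1]$ by Theorem~\ref{thm:TildeVhGlobalBounds}.

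The next step is to upgrade this to a bound on $v_\hfine - R_0^\top v_\hcoarse$. My idea is to exploit the fact that, by the defining identity~\eqref{eq:R0T}, $R_0^\top v_\hcoarse$ is precisely the $L^2(\Om)$-projection of $v_\hcoarse$ onto $V_\hfine$. Since $v_\hfine - R_0^\top v_\hcoarse\in V_\hfine$, testing~\eqref{eq:R0T} with $w_\hfine := v_\hfine - R_0^\top v_\hcoarse$ yields
\[
\| v_\hfine - R_0^\top v_\hcoarse \|_{L^2(\Om)}^2 = \int_\Om (v_\hfine - v_\hcoarse)(v_\hfine - R_0^\top v_\hcoarse)\,\diff\xvec,
\]
so Cauchy--Schwarz gives $\| v_\hfine - R_0^\top v_\hcoarse \|_{L^2(\Om)} \le \| v_\hfine - v_\hcoarse \|_{L^2(\Om)}$, and combining this with the previous estimate yields~\eqref{eq:L2_2}.

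For the broken DG-norm estimate~\eqref{eq:H1_2}, the key observation is that $v_\hfine - R_0^\top v_\hcoarse$ lies in $V_\hfine$, so I can trade $H^1$-type information for $L^2$-information at the expected cost of $p^2/\hfine$. Concretely, the volume term $\|\nabla_h(v_\hfine - R_0^\top v_\hcoarse)\|_{L^2(\mcal[T][\hfine])}$ is controlled via the polynomial inverse inequality of Lemma~\ref{lem:inverse}, while the jump term $\|\sigma_{h,1}^{\nicefrac{1}{2}}\jump{v_\hfine - R_0^\top v_\hcoarse}\|_{L^2(\mcal[F][\hfine])}$ is handled by the trace inverse estimate of Lemma~\ref{lem:inversepoly} combined with $\sigma_{h,1}\eqsim p^2/\hfine$. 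Summing elementwise I expect to obtain $\normDG[v_\hfine-R_0^\top v_\hcoarse][h,1]\lesssim (p^2/\hfine)\,\| v_\hfine - R_0^\top v_\hcoarse\|_{L^2(\Om)}$; inserting~\eqref{eq:L2_2} would then give~\eqref{eq:H1_2}.

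The main obstacle I anticipate is verifying the approximation rate $\hcoarse/q$ for the coarse $hp$-approximant $\Pi_\hcoarse$ applied to the only $H^1$-regular function $\widetilde v_h$: Lemma~\ref{lem:interpDG_loc} is stated for piecewise $H^k$ functions whose Stein extension belongs to $H^k$ on each covering simplex of the underlying mesh, and one must check that this hypothesis is met by $\widetilde v_h$ on the covering simplices of $\mcal[T][\hcoarse]$ (which should be straightforward, since $\widetilde v_h\in H^1_0(\Om)$ extends by zero to $H^1(\mathbb{R}^d)$). Once this is settled, the remaining two ingredients --- the best-approximation property of the $L^2$-projection $R_0^\top$ and the $V_\hfine$-inverse estimates --- are mechanical.
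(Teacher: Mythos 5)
Your proposal follows essentially the same route as the paper's proof: you build $v_\hcoarse = \Pi_\hcoarse \widetilde v_h$ from the global conforming lift of Theorem~\ref{thm:TildeVhGlobalBounds}, exploit that $R_0^\top$ coincides with the $L^2$-projection onto $V_\hfine$ to transfer the approximation error from $v_\hcoarse$ to $R_0^\top v_\hcoarse$, and then upgrade the $L^2$ bound to the DG norm via Lemmas~\ref{lem:inverse} and~\ref{lem:inversepoly}. The only cosmetic difference is that your $L^2$ step invokes the best-approximation property of the projection directly rather than inserting the intermediate term $\Pi^0\widetilde v_h$ as the paper does; both are correct, and you correctly anticipate (as the paper does) the need to apply Poincar\'e's inequality to replace $\|\widetilde v_h\|_{H^1}$ by $|\widetilde v_h|_{H^1}$ before invoking the stability bound of Theorem~\ref{thm:TildeVhGlobalBounds}.
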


\begin{proof}
Let $v_{\hfine} \in V_{\hfine}$ and let $v_{\hcoarse} \in V_{\hcoarse}$ be defined as $v_{\hcoarse} = \Pi_{\hcoarse} \widetilde{v}_{h},$ with $\widetilde{v}_{h}$ as defined in~\Cref{thm:TildeVhGlobalBounds} and where $\Pi_{\hcoarse}$ is the $hp$-approximant introduced in~\Cref{lem:interpDG_loc}. Then, by employing the triangle inequality we have
\begin{equation}
\| v_h - R_0^\top v_{\hcoarse}\|_{L^2(\Om)}  \lesssim \| v_h - \widetilde{v}_{h}\|_{L^2(\Om)} \!+ \| \widetilde{v}_{h} - \Pi^0\widetilde{v}_{h}\|_{L^2(\Om)}
 \!\!+ \| \Pi^0\widetilde{v}_{h} - R_0^\top( \Pi_{\hcoarse}\widetilde{v}_{h})\|_{L^2(\Om)},
\end{equation}
where $\Pi^0:L^2(\Om) \rightarrow V_{\hfine}$ is the $L^2$-projection operator onto $V_{\hfine}$.
From the definition of $R_0^\top$, we note that
$\Pi^0 v_{\hcoarse} = R_0^\top v_{\hcoarse}$ for all $v_{\hcoarse} \in V_{\hcoarse}$. Hence, exploiting~\Cref{lem:interpDG_loc} together with Assumption~\ref{ass2}, cf.~\Cref{rmrk:GloApp}, gives
\begin{align}
\| v_h -  R_0^\top v_{\hcoarse}\|_{L^2(\Om)} & \lesssim \| v_h - \widetilde{v}_{h}\|_{L^2(\Om)} + \| \widetilde{v}_{h} - \Pi^0\widetilde{v}_{h}\|_{L^2(\Om)}  + \| \Pi^0( \widetilde{v}_{h} - \Pi_{\hcoarse}\widetilde{v}_{h} )\|_{L^2(\Om)} \\
& \le \| v_h - \widetilde{v}_{h}\|_{L^2(\Om)} + \| \widetilde{v}_{h} - \Pi_{\hfine}\widetilde{v}_{h}\|_{L^2(\Om)}  + \| \widetilde{v}_{h} - \Pi_{\hcoarse}\widetilde{v}_{h}\|_{L^2(\Om)} \\
& \lesssim \| v_h - \widetilde{v}_{h}\|_{L^2(\Om)}+ \nicefrac{h}{p} \, \| \widetilde{v}_{h}\|_{H^1(\Om)} + \nicefrac{\hcoarse}{q} \, \| \widetilde{v}_{h}\|_{H^1(\Om)};
\end{align}
here we have used that $\| \Pi^0v \|_{L^2(\Om)} \le \| v \|_{L^2(\Om)}$ for all $v \in L^2(\Om)$ and $\| v - \Pi^0 v \|_{L^2(\Om)} \le \| v - w \|_{L^2(\Om)}$ for all $w \in L^2(\Om)$. By applying Poincar\'e's inequality to $\widetilde{v}_{h} \in H^1_0(\Om)$ and noting~\Cref{thm:TildeVhGlobalBounds}, inequality~\eqref{eq:L2_2} immediately follows by observing that $\hfine \le \hcoarse$ and $q \le p$. In order to obtain~\eqref{eq:H1_2} we proceed as follows:
\begin{align}\label{eq:VhMinusR0TVHDG}
 \normDG[v_h - R_0^\top v_{\hcoarse}][h,1]^2 = \| \nabla_h (v_h - R_0^\top v_{\hcoarse}) \|_{L^2(\mcal[T][h])}^2 + \| \sigma_{h,1}^{\nicefrac{1}{2}} \jump{ v_h - R_0^\top v_{\hcoarse} }\|_{L^2(\mcal[F][\hfine])}^2.
\end{align}
We bound the first term on the right-hand side of~\eqref{eq:VhMinusR0TVHDG} by means of \Cref{lem:inverse} and~\eqref{eq:L2_2} as follows:
\begin{align}
\| \nabla_h (v_h - R_0^\top v_{\hcoarse}) \|_{L^2(\mcal[T][h])}^2  & = \sum_{\elem \in \mcal[T][\hfine]} \| \nabla_h (v_h - R_0^\top v_{\hcoarse}) \|_{L^2(\elem)}^2 \\
&  \lesssim \frac{p^4}{\hfine^2} \| v_h - R_0^\top v_{\hcoarse} \|_{L^2(\Om)}^2 \lesssim \frac{p^4}{q^2} \frac{\hcoarse^2}{\hfine^2} \normDG[v_h][h,1]^2. \label{eq:BoundForTheFirstTerm}
\end{align}
The second term on the right-hand side of~\eqref{eq:VhMinusR0TVHDG} can be bounded by recalling the definition of $\sigma_{h,1}$,~\Cref{lem:inversepoly} and~\eqref{eq:L2_2} as follows:
\begin{align}\label{eq:BoundForTheSecondTerm}
\begin{aligned}
\| \sigma_{h,1}^{\nicefrac{1}{2}} \jump{ v_h - R_0^\top v_{\hcoarse} }\|_{L^2(\mcal[F][\hfine])}^2 & \lesssim \frac{p^2}{h} \sum_{\elem \in \mcal[T][\hfine]} \| v_h - R_0^\top v_{\hcoarse} \|_{L^2(\partial \elem)}^2 \\
& \lesssim  \frac{p^2}{h}  \frac{p^2}{h} \| v_h - R_0^\top v_{\hcoarse} \|_{L^2(\Om)}^2  \lesssim \frac{p^4}{q^2} \frac{\hcoarse^2}{\hfine^2} \normDG[v_h][h,1]^2.
\end{aligned}
\end{align}
Inserting~\eqref{eq:BoundForTheFirstTerm} and~\eqref{eq:BoundForTheSecondTerm} into~\eqref{eq:VhMinusR0TVHDG} we obtain~\eqref{eq:H1_2}.
\end{proof}

With~\Cref{lem:vh_R0TvH_2} in hand, we can prove the following result.
\begin{theorem}\label{thm:ValidityStableDecompositionNonNested}
~\Cref{ass:StaDec} holds with
\begin{equation}
C_{\sharp}^2 \eqsim \Bigl( \frac{p^4}{q^2} \frac{\hcoarse^2}{\hfine^2} \Bigr).
\end{equation}
\end{theorem}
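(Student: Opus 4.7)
The plan is to follow the strategy of Theorem~\ref{thm:TheoremValidityAssumption3} verbatim, replacing each appeal to Lemma~\ref{lem:vh_R0TvH} by its non-nested counterpart Lemma~\ref{lem:vh_R0TvH_2}. Given $v_h \in V_h$, I would first set $v_0 := v_{\hcoarse}$ to be the coarse function supplied by Lemma~\ref{lem:vh_R0TvH_2}, and then decompose
$v_h - R_0^{\top} v_0 = \sum_{i=1}^{N_{\hlocal}} R_i^{\top} v_i$
with $v_i := R_i(v_h - R_0^{\top} v_0)$ using Lemma~\ref{lem:decVi}. Repeating the manipulations~\eqref{eq:Sum_Ai_vi}--\eqref{eq:Sum_Ai_vi_2} in the nested proof, an application of the triangle and Young inequalities together with the continuity and coercivity properties of $\mcal[A][h]$ from Lemma~\ref{lem:contcoerc_2} and with Lemma~\ref{lem:decVi} reduces the task to controlling
\begin{equation*}
\RomanNumeralCaps{1}:=\Aa[h][v_h][v_h],\qquad \RomanNumeralCaps{2}:=\normDG[v_h - R_0^{\top} v_0][h,1][2],\qquad \RomanNumeralCaps{3}:=\sum_{i=1}^{N_{\hlocal}} \|\sigma_{h,1}^{\nicefrac{1}{2}}(v_h - R_0^{\top}v_0)\|_{L^2(\partial \Omega_i)}^{2}.
\end{equation*}

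Next, I would dispose of $\RomanNumeralCaps{2}$ directly by invoking the sharp DG-norm estimate \eqref{eq:H1_2} of Lemma~\ref{lem:vh_R0TvH_2} to obtain $\RomanNumeralCaps{2}\lesssim (p^{4}/q^{2})(\hcoarse^{2}/\hfine^{2})\,\normDG[v_h][h,1][2]$, and then use coercivity (Lemma~\ref{lem:contcoerc_2}) to upgrade this to $(p^{4}/q^{2})(\hcoarse^{2}/\hfine^{2})\,\Aa[h][v_h][v_h]$. This is precisely the step where the non-nested geometry enters: in the nested proof one could exploit $\jump{R_0^{\top}v_0} = \mathbf{0}$ on $\mcal[F][\hfine] \setminus \mcal[F][\hcoarse]$ to discard the bulk of the jump contribution, but no such cancellation is available here, so one is forced to pay the full energy-norm bound from Lemma~\ref{lem:vh_R0TvH_2}.

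For $\RomanNumeralCaps{3}$ I would exploit the massively parallel hypothesis $\mcal[T][\hlocal] = \mcal[T][\hfine]$, so that each $\Omega_i$ is a single fine element and $\partial \Omega_i \subset \mcal[F][\hfine]$. Combining the definition of $\sigma_{h,1}$ with the polytopic trace--inverse inequality of Lemma~\ref{lem:inversepoly} and then with the $L^{2}$-estimate \eqref{eq:L2_2} yields
\begin{equation*}
\RomanNumeralCaps{3} \lesssim \frac{p^{2}}{\hfine}\sum_{\elem \in \mcal[T][\hfine]} \|v_h - R_0^{\top}v_0\|_{L^2(\partial \elem)}^{2} \lesssim \frac{p^{4}}{\hfine^{2}} \|v_h - R_0^{\top}v_0\|_{L^2(\Omega)}^{2} \lesssim \frac{p^{4}}{q^{2}}\frac{\hcoarse^{2}}{\hfine^{2}}\,\normDG[v_h][h,1][2],
\end{equation*}
which coercivity then controls by $(p^{4}/q^{2})(\hcoarse^{2}/\hfine^{2})\,\Aa[h][v_h][v_h]$. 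Summing the three bounds produces the claimed scaling $C_{\sharp}^{2}\eqsim p^{4}\hcoarse^{2}/(q^{2}\hfine^{2})$.

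The main obstacle is the calibration, not the algebra: one must check that both $\RomanNumeralCaps{2}$ and $\RomanNumeralCaps{3}$ share the same $p^{4}\hcoarse^{2}/(q^{2}\hfine^{2})$ rate and that no worse factor is produced along the way. The extra factor of $p^{2}$ compared with the nested estimate in Theorem~\ref{thm:TheoremValidityAssumption3} is inherited from the inverse inequality used to transfer the $\mcal[O](\hcoarse/q)$ $L^{2}$-control of Lemma~\ref{lem:vh_R0TvH_2} into a DG-type control, and it cannot be avoided within this framework without a non-nested approximant enjoying optimal estimates in the broken energy norm rather than only in $L^{2}$.
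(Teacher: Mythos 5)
Your proof is correct and follows essentially the same route as the paper: set $v_0 := v_{\hcoarse}$ from Lemma~\ref{lem:vh_R0TvH_2}, decompose $v_h - R_0^{\top}v_0$ via Lemma~\ref{lem:decVi}, and finish with the non-nested DG-norm estimate \eqref{eq:H1_2} and coercivity. One small point in your favour: the paper's condensed proof cites \eqref{eq:Sum_Ai_vi}, \eqref{eq:AplusB}, \eqref{eq:BoundOfTermC} and then claims the bound \eqref{eq:Sum_Ai_vi_3} in terms of $\normDG[v_h - R_0^\top v_0][h,1][2]$ alone, which glosses over the boundary-trace term $\sum_i \|\sigma_{h,1}^{\nicefrac{1}{2}}(v_h-R_0^{\top}v_0)\|_{L^2(\partial\Om_i)}^2$ coming from \eqref{eq:BoundOfTermC} (a trace, not a jump); your explicit bound for that term via Lemma~\ref{lem:inversepoly} and the $L^2$ estimate \eqref{eq:L2_2} is exactly the intermediate chain used inside \eqref{eq:BoundForTheSecondTerm}, and makes the argument fully transparent while yielding the same $p^{4}\hcoarse^{2}/(q^{2}\hfine^{2})$ rate.
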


\begin{proof}
Let $v_{\hfine} \in V_{\hfine}$. Proceeding as in the proof of~\Cref{thm:TheoremValidityAssumption3}, by selecting $v_0 = v_{\hcoarse}$ as in~\Cref{lem:vh_R0TvH_2}, $v_{\hfine}$ can be decomposed as $v_h = \sum_{i=0}^{N_{\hfine}} R_i^\top v_i$, with $v_i = R_i (v_h - R_0^{\top} v_0) \in V_i$, $i=1,\dots,N_{\hfine}$, so that
\begin{align}\label{eq:Sum_Ai_vi_3}
\Bigl| \sum_{i=0}^{N_\hfine} \Aa[i][v_i][v_i] \Bigr| & \lesssim \normDG[v_h - R_0^\top v_0][h,1]^2 + \Aa[h][v_h][v_h],
\end{align}
where we have used~\eqref{eq:Sum_Ai_vi},~\eqref{eq:AplusB} and~\eqref{eq:BoundOfTermC} with the hypothesis $\mcal[T][\hlocal] = \mcal[T][\hfine]$. The result then immediately follows by noting~\eqref{eq:H1_2} together with the coercivity of $\mcal[A][h]$, cf.~\Cref{lem:contcoerc_2}.
\end{proof}

\begin{remark}\label{rmrk:CondNonNested}
Based on~\Cref{thm:ValidityStableDecompositionNonNested}, for non-nested coarse and fine spaces $V_{\hcoarse}$ and $V_{\hfine}$, respectively, the condition number $K(P_{ad})$ of the additive Schwarz operator $P_{ad}$ can be bounded as follows:
\begin{equation}\label{eq:CondBoundNonNested}
K(P_{ad}) \lesssim \Bigl( \frac{p^4}{q^2} \frac{\hcoarse^2}{\hfine^2} \Bigr) (N_{\mathbb{S}}+1).
\end{equation}
\end{remark}

\bibliographystyle{abbrv}
\bibliography{biblio_AddSch}
\end{document}